\newcommand{\eg}{e.g.,\@\xspace}
\newcommand{\ie}{i.e.,\@\xspace}
\begin{document}

\begin{frontmatter}
  \title{Structure preserving numerical methods for the ideal compressible MHD system}
  \tnotetext[t1]{This research is funded by Swedish Research Council (VR) under grant number 2021-04620.}
  \author[1]{Tuan Anh Dao}
  \ead{tuananh.dao@it.uu.se}
  \author[1]{Murtazo Nazarov}
  \ead{murtazo.nazarov@it.uu.se}
  \author[2]{Ignacio Tomas}
  \ead{igtomas@ttu.edu}
  \address[1]{Department of Information Technology, Uppsala University, Sweden}
  \address[2]{Department of Mathematics \& Statistics
    Texas Tech University}

\begin{abstract}
We introduce a novel structure-preserving method in order to approximate
the compressible ideal Magnetohydrodynamics (MHD) equations. This
technique addresses the MHD equations using a non-divergence formulation, where
the contributions of the magnetic field to the momentum and total mechanical
energy are treated as source terms. Our approach uses the Marchuk-Strang
splitting technique and involves three distinct components: a compressible
Euler solver, a source-system solver, and an update procedure for the total
mechanical energy. The scheme allows for significant freedom on the choice of
Euler's equation solver, while the magnetic field is discretized using a
curl-conforming finite element space, yielding exact preservation of the
involution constraints. We prove that the method preserves invariant domain
properties, including positivity of density, positivity of internal energy,
and the minimum principle of the specific entropy. If the scheme used to solve
Euler's equation conserves total energy, then the resulting MHD scheme can be
proven to preserve total energy. Similarly, if the scheme used to solve Euler's
equation is entropy-stable, then the resulting MHD scheme is entropy stable as
well. In our approach, the CFL condition does not depend on magnetosonic
wave-speeds, but only on the usual maximum wavespeed from Euler's system. To
validate the effectiveness of our method, we solve a variety of ideal MHD
problems, showing that the method is capable of delivering high-order accuracy
in space for smooth problems, while also offering unconditional robustness in
the shock hydrodynamics regime as well.
\end{abstract}

\begin{keyword}
  MHD \sep structure preserving \sep invariant domain  \sep involution
constraints \sep energy-stability
\end{keyword}

\end{frontmatter}

\section{Introduction}
Magnetohydrodynamic (MHD) equations model the dynamics of plasma, which is an
ionizied gas at high temperatures. The ideal MHD equations combines: the fluid
dynamics equations description of Euler equations, with zero-permittivity limit
of Maxwell's equations and Ohm's law closure. The model considers both the
movement of the conductive fluid and its interaction with magnetic fields. The
MHD equations are widely used in astrophysics applications as well as in
nuclear fusion research, where it is used to study and control instabilities in
the plasma confinement.

Solutions to the MHD system contain contact, shock, and rarefaction
waves. In addition, the interaction of fluid and magnetic fields at very high
temperatures pose additional challenges for MHD simulations. Despite these
difficulties, numerical solutions of the MHD system are vital to predict
phenomena in various scientific fields such as plasma physics and
astrophysics. Furthermore, when performing numerical simulations of the MHD
system, it is crucial to ensure the preservation of essential
structure of the solution, such as positivity properties, conservation of total
energy, and involution constraints.

Various schemes that retain several of these properties, in particular for the
compressible Euler equations, are published in existing literature. For
instance, the works of \cite{Perthame1996, Zhang2010, Guer2018}, along with
references provided therein, represent just a subset of the comprehensive
research dedicated to achieving positivity-preserving approximations for the
compressible Euler equations, using finite volume, discontinuous Galerkin, and
finite element methods. Unfortunately, direct extension of these methods for
the MHD system is not straighforward due to the additional induction equation
for the magnetic field and corresponding to the magnetic stress/force. In
particular, the standard MHD model in divergence form is only valid if
$\diver{} \Bfield \equiv 0$ at all times. A slight violation of the
divergence-free condition can lead to negative internal energy, which will
cause the numerical simulation to fail catastrophically, see \eg \cite{Wu2018,
Wu2019}. It should be emphasized that the divergence formulation of the MHD
system is valid only for sufficiently smooth solutions. However, in the
case of weakly differentiable and discontinuous solutions, $\diver{} \Bfield$
cannot be pointwise zero. To the best of our knowledge, none of the divergence
cleaning techniques, such as \cite{Dedner2002}, can completely eliminate the
discrepancy error of the divergence of $\Bfield$.

In this paper, instead of using the MHD equations in divergence form, see
\eqref{MHDsystemDiv}, which is widely used in scientific works, we proposed to
use the induction equation and preserve the magnetic forces acting on the
momentum and total mechanical energy as source terms. More precisely, we
propose solving
\begin{subequations}
  \label{MHDsystem}
  \begin{align}
    \label{consMass}
    \partial_t \dens + \diver{}\mom &= 0 \, , \\
    \partial_t \mom + \diver{}(\dens^{-1} \mom \mom^\transp
    + \mathbb{I} p) &=
   - \mu  \Hfield \times  \curl{}\Hfield  \, , \\
    \label{consMechE}
    \partial_t \totme + \diver{}\big(\tfrac{\mom}{\dens} (E + p) \big) &=
    - \mu (\Hfield \times  \curl{}\Hfield) \cdot \tfrac{\mom}{\dens} \, , \\
    \label{consBfield}
    \partial_t \Hfield - \curl{} (\tfrac{\mom}{\dens} \times \Hfield) &= 0 \, ,
  \end{align}
\end{subequations}
where $\rho$ is the density, $\mom$ is the momentum, $\totme =
\frac{1}{2 \rho}|\mom|^2 + \rho e$ is the total mechanical energy, $e$ is the
specific internal energy, $\Hfield$ is the magnetic field, $p = p(\rho, e)$ is
the pressure, $\mathbb{I} \in \mathbb{R}^{d\times d}$ denotes the identity
matrix with $d$ being the space dimension, and $\mu>0$ is the
magnetic permeability constant. Taking the divergence to both sides of
\eqref{consBfield} we obtain the condition
$
\partial_t \diver{}\Hfield = 0,
$
implying that the evolution of the magnetic field $\Hfield$ is such
that $\diver{}\Hfield(t) \equiv \diver{}\Hfield_0$ for all $t \geq 0$, where
$\Hfield_0$ is the initial data.

Note, that for the case of smooth (\eg $\mathcal{C}^1$-continuous or better)
divergence-free solutions the formulation \eqref{MHDsystem} is equivalent to
the MHD system in the divergence form \eqref{MHDsystemDiv}. However, for the
case of weakly-differentiable and discontinuous solutions, we should regard
\eqref{MHDsystem} and \eqref{MHDsystemDiv} as entirely different models. In
particular, there is no reason to believe that \eqref{MHDsystem} and
\eqref{MHDsystemDiv} should produce the same families of discontinuous
solutions, see for instance \cite[p. 253]{Smoller1994} on a related discussion.
We emphasize that formulation \eqref{MHDsystemDiv} is not valid without the
assumption $\diver{}\Bfield = \diver{}\Bfield_0 = 0$ since it is an intrinsic
part of its derivation. On other hand, formulation \eqref{MHDsystem} does not
need or use the condition $\diver{}\Hfield \equiv 0 $: there is no
mathematical reason to incorporate such assumption.

From a practical point of view, regardless of whether we prefer
source-formulation \eqref{MHDsystem} or divergence formulation
\eqref{MHDsystemDiv}, any numerical method satisfying the following:
\begin{itemize}
\item[\textrm{(i)}] Preservation of pointwise stability properties: such as
  pointwise positivity of the density and minimum principle of the specific
  entropy;
\item[\textrm{(ii)}] Preservation of involution constraints, in this case,
  preservation of the weak-divergence;
\item[\textrm{(iii)}] Preservation of total energy;
\item[\textrm{(iv)}] Preservation of second order accuracy (or higher) for
  smooth solutions;
\item[\textrm{(v)}] Preservation of discrete entropy-dissipation properties.
\end{itemize}
is a desirable method for engineering and scientific applications. List
\textrm{(i)}-\textrm{(v)} is quite ambitious and we are not aware of any
numerical scheme capable of preserving properties \textrm{(i)-(v)}
simultaneously. We highlight that designing a scheme that
preserves just one of these properties (\eg formal high-order accuracy,
see for instance \cite{Dao2022a, Dao2022b}) does not pose a major
challenge. The mathematical challenge of structure preservation
lies in the satisfaction of two or more of these properties simultaneously. In
this manuscript, we advocate for the use of formulation \eqref{MHDsystem},
instead of the usual divergence form \eqref{MHDsystemDiv}, as better fit in
order to preserve properties (i)-(v) outlined above.

In \cite{Dao2022a} it was proved that by adding a viscous term to each equation
of the ideal MHD system (\ie conservation of mass, conservation of momentum,
conservation of total energy, and the induction equation) one can achieve
positivity of density and internal energy, minimum principle of the specific
entropy, and satisfaction of all generalized entropies. In this article we
improve the result of \cite{Dao2022a}. We prove that the viscous regularization
of mass, momentum, and total mechanical energy is sufficient to achieve the
above mentioned properties (\ie positivity of density and internal
energy, minimum principle of the specific entropy, and compatibility with all
generalized entropies). This shows that there is no need to regularize the
induction equation. This is a rather puzzling result, hinting at the
idea that the inclusion of the $\Bfield$ field in the MHD Riemann problem is an
artificial construct. We propose to separate the evolution equation of $\Bfield$
from the other components of the system (density, momentum, and total mechanical
energy), as originally described by the non-divergence formulation
\eqref{MHDsystem}. This is by no means a new idea: treating $\Bfield$
independently using its own spatial discretization has been proposed, for
example, in \cite{Mishra2011, Pagliantini2018, Fuchs2009} and references
therein. However, our approach still represents a major departure from
previously existing ideas and methods for the MHD system:
\begin{itemize}
\item[\thickdash] The induction equation is not treated as an isolated object,
but rather as a constituent of a Hamiltonian system consisting in: the
  balance of momentum subject to the Lorentz force $\mu (\curl{}\Hfield
  \times\Hfield)$, coupled to the induction equation \eqref{consBfield}, see for
  instance expressions \eqref{Operator2} and \eqref{SourceSmoothTest}. By
  treating a Hamiltonian system as such, this lends itself to natural
  definitions of stability that we can preserve in the fully-discrete setting.
\item[\thickdash] We use no advective stabilization in any form or fashion for
  the induction equation. This is tacitly suggested by the viscous regularization
  argument indicating that no artificial viscosity is required for the magnetic
  field $\Hfield$. This is also consistent with Hamiltonian systems, such as
  \eqref{SourceSmoothTest}, where the natural notion of stability is preservation
  of quadratic invariants. We avoid construing the induction equation as an
  advective system \cite{Heumann2013, Heumann2016}, Friedrich's system
  \cite{Besse2005}, or vanishing-viscosity limit (\eg conservation law).
\item[\thickdash] We use a primal (no vector potential) curl-conforming
  framework in order to discretize the magnetic field $\Hfield$. This is
  consistent with the preservation of weak divergence. We do not pursue to
  preserve a zero strong-divergence, or use a div-conforming framework as
  suggested for instance in \cite{Basting2017, Pagliantini2018}. However, we
  show that the method can preserve zero weak-divergence to machine accuracy
  for smooth as well as non-smooth regimes. We use no divergence cleaning.
\item[\thickdash] Energy of the non-divergence system \eqref{MHDsystem} is
  defined by a functional that consists in the sum of a linear $+$
  quadratic functional, see Section \ref{sec:energy}. This is quite different
  from the case of the divergence system \eqref{MHDsystemDiv} where energy
  stability consists in preserving the property
  $\int_{\Omega} \tote(t) \dx = \int_{\Omega} \tote_0 \dx$, which is the
  preservation of a linear functional.

\item[\thickdash] The resulting scheme preserves properties (i)-(iv) outlined
  above. This scheme can be used for smooth as well as extreme
  shock-hydrodynamics regimes. Property (v), entropy stability, can be preserved
  as well, provided the hyperbolic solver used to discretize Euler's subsystem is
  entropy stable. We make no emphasis on property (v) since there the is a very
  large literature on the matter. The scheme runs at the CFL of Euler's
  system, with no time-step size restriction due to magnetosonic waves. There
  is, in principle, no limit on the formal spatial accuracy of the scheme.
\end{itemize}

The outline of the paper is as follows: in Section \ref{sec:MHDprop} we
provide all the necessary background in relationship to the mathematical
properties of the MHD and Euler's system. In
Sections \ref{sec:SpaceDisc}-\ref{sec:SourceUpdate} we summarize the main
properties of the spatial and temporal discretizations that will be used. In
Section \ref{sec:FirstScheme} we present the scheme and make precise its
mathematical properties. Finally, in Section \ref{sec:numexp} we present
numerical results illustrating the efficiency of the solver is the context of
smooth as well as non-smooth test problems. We highlight that the main ideas
advanced in this paper can be implemented using quite general hyperbolic solvers
for Euler's equation. In Section \ref{sec:FirstScheme} we outline the structure
and mathematical properties expected from such hyperbolic solver. For the sake
of completeness we also describe the hyperbolic solvers used for all
computations in \ref{sec:HypSolver}.

%%%%%%%%%%%%%%%%%%%%%%%%%%%%%%%%%%%%%%%%%%%%%%%%%%%%%%%%%%%%%%%%%%%%%%%%%%%%%%%% 
%%%%%%%%%%%%%%%%%%%%%%%%%%%%%%%%%%%%%%%%%%%%%%%%%%%%%%%%%%%%%%%%%%%%%%%%%%%%%%%% 
%%%%%%%%%%%%%%%%%%%%%%%%%%%%%%%%%%%%%%%%%%%%%%%%%%%%%%%%%%%%%%%%%%%%%%%%%%%%%%%% 
%%%%%%%%%%%%%%%%%%%%%%%%%%%%%%%%%%%%%%%%%%%%%%%%%%%%%%%%%%%%%%%%%%%%%%%%%%%%%%%% 
\section{Main properties of the MHD system}\label{sec:MHDprop}

\subsection{Vanishing-viscosity limits and invariant sets}\label{sec:viscous}
In this section we improve the result of \cite{Dao2022a}. Let us consider the
case where initial magnetic field is divergence-free, \ie $\diver{}
\Bfield_0 = 0$. As already mentioned in the introduction, this implies that
$\diver{} \Bfield = 0$ also for all time $t$. Therefore, the system
\eqref{MHDsystem} can be re-written in the following divergence form:
\begin{subequations}\label{MHDsystemDiv}
  \begin{align}
    \label{consMassDiv}
    \partial_t \dens + \diver{}\mom &= 0 \, , \\
    \label{consMomDiv}
    \partial_t \mom + \diver{}(\dens^{-1} \mom \mom^\transp -
    \mu^{-1}\Bfield \Bfield^\transp + \mathbb{I} p) &=
                                                      \bzero  \, , \\
    \label{consMechEDiv}
    \partial_t \tote + \diver{}\big(\tfrac{\mom}{\dens} (\tote + p) -
    \Bfield (\Bfield^\transp \tfrac{\mom}{\dens})\big) &= 0, \\
    \label{consBfieldDiv}
    \partial_t \Bfield + \diver{} (\rho^{-1}\Bfield \mom^\transp - \rho^{-1}
    \mom \Bfield^\transp) &= \bzero,
  \end{align}
\end{subequations}
where the total energy $\tote =
\frac{1}{2 \rho}|\mom|^2 + \rho e + \tfrac{1}{2\mu} |\Bfield|^2$ includes
the contribution from the magnetic field. The regularized system reads:
\begin{subequations}\label{MHDsystemDivViscosity}
  \begin{align}
    \label{viscousConsMassDiv}
    \partial_t \dens + \diver{}\mom &= \epsilon\Delta\rho \, , \\
    \label{viscousConsMomDiv}
    \partial_t \mom + \diver{}\left(\dens^{-1} \mom \mom^\transp -
    \mu^{-1}\Bfield \Bfield^\transp
    + \mathbb{I} \left(p+\tfrac{1}{2 \mu}|\Bfield|^2\right)\right) &=
    \epsilon\Delta\mom  \, , \\
    \label{viscousConsMechEDiv}
    \partial_t \tote + \diver{}\big(\tfrac{\mom}{\dens} (\tote + p) -
    \Bfield (\Bfield^\transp \tfrac{\mom}{\dens})\big) &=
    \epsilon\Delta\left(\tote-\tfrac{1}{2\mu}|\Bfield|^2\right), \\
    \label{viscousConsBfieldDiv}
    \partial_t \Bfield + \diver{} (\rho^{-1}
    \mom \Bfield^\transp - \rho^{-1}\Bfield \mom^\transp) &= \bzero \, .
  \end{align}
\end{subequations}

Note that there is no viscous regularization in \eqref{viscousConsBfieldDiv}.
In addition, the magnetic pressure is subtracted from the total energy in the
viscous regularization term on the right hand side of
\eqref{viscousConsMechEDiv}. The difference between the viscous
regularization in reference \cite{Dao2022a} and that one in
expression \eqref{MHDsystemDivViscosity} is that the magnetic regularization was
removed. In this section, we prove that even without the magnetic regularization
terms, the state $\state = [\rho, \mom, \tote, \Bfield]^\transp$ of
\eqref{MHDsystemDivViscosity} satisfies: positivity of density, positivity of
internal energy, and minimum entropy principles, for all time. Moreover,
\eqref{MHDsystemDivViscosity} is compatible with all the generalized entropy
inequalities. These results can be obtained with slight modifications of the
proofs in \cite{Dao2022a}.

\begin{definition}[Specific entropy, Gibbs identity and
  physical restrictions]\label{CompleteEOS} Let $\inte(\state) = \tote -
  \tfrac{1}{2 \rho}|\mom|^2 - \tfrac{1}{2\mu} |\Bfield|^2$ denote the
internal energy, $\specinte(\state) = \dens^{-1} \inte(\state)$ denote
specific internal energy, and $v = \rho^{-1}$
  be the specific volume. Let $s = s(\rho, \specinte): \mathbb{R}^+ \times
  \mathbb{R}^+ \rightarrow \mathbb{R}$ denote the specific entropy. Assuming
that
  the exact differential of $s = s(\rho, \specinte)$, meaning $\mathrm{d}s =
  \frac{\partial s}{\partial e} \mathrm{d} e + \frac{\partial s}{\partial \rho}
  \mathrm{d} \rho$, is consistent with Gibbs' differential relationship $
  \mathrm{d}s = \tfrac{1}{\theta}\mathrm{d}\specinte + \frac{p}{\theta}
  \mathrm{d}v$, where $\theta$ is the temperature, implies that
  \begin{align*}
    \tfrac{\partial s}{\partial e} = \tfrac{1}{\theta}
    \, \text{,} \
    \tfrac{\partial s}{\partial \rho} = - \tfrac{p}{\theta \rho^2} \, ,
  \end{align*}
  combining both we obtain the formula for the pressure $p = \rho^2
  \frac{\partial s}{\partial \rho}[\frac{\partial s}{\partial \specinte}]^{-1}$.
  In order for $s(\rho, \specinte)$ to be physically meaningful it has to
satisfy
  some mathematical restrictions. An exhaustive list of restrictions can be
found
  in \cite{Meni1989, Guer2014}. In this manuscript we will only assume that
  $\frac{\partial s}{\partial \specinte} > 0$, implying positivity of the
  temperature, and that $-s$ is strictly convex with for any $\dens, \specinte >
  0$, see \cite[p. 3]{Dao2022a}. We will use the shorthand notation $s_e :=
  \frac{\partial s}{\partial \specinte}$ and $s_\rho := \frac{\partial
    s}{\partial\rho}$.
\end{definition}

\begin{lemma}[Positivity of density, see \cite{Guer2014,
    Dao2022a}]\label{lem:DensityPositivity}
  Assuming sufficient smoothness and boundedness of the solution, the density
  solution satisfies the following property
\begin{align*}
\mathrm{essinf}_{\xcoord \in \mathbb{R}^{d}} \rho(\xcoord, t) > 0, \quad
\forall t>0.
\end{align*}
\end{lemma}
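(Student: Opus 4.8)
The plan is to exploit the fact that the mass balance \eqref{viscousConsMassDiv} is completely decoupled from the magnetic field: it reads $\partial_t \dens + \diver \mom = \epsilon \Delta \dens$, exactly as in the purely hydrodynamic (viscous Euler) setting, so the argument never sees $\Bfield$ and is identical to the one in \cite{Guer2014, Dao2022a}. Writing $\mathbf{u} := \dens^{-1}\mom$ for the velocity and expanding $\diver \mom = \mathbf{u}\cdot\nabla\dens + \dens\,\diver\mathbf{u}$, the density obeys the linear convection--diffusion--reaction equation
\[
\partial_t \dens - \epsilon \Delta \dens + \mathbf{u}\cdot\nabla\dens + (\diver\mathbf{u})\,\dens = 0 ,
\]
in which the smoothness and boundedness hypotheses guarantee that the drift $\mathbf{u}$ and the reaction coefficient $\diver\mathbf{u}$ are bounded in $L^\infty$.

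First I would track the pointwise minimum. Setting $\dens_{\min}(t):=\mathrm{essinf}_{\xcoord}\,\dens(\xcoord,t)$ and assuming (via the boundedness hypothesis, which controls the behaviour at infinity) that for each $t$ the infimum is attained at some point $\xcoord_0(t)$, at that point $\nabla\dens = \bzero$ and $\Delta\dens \ge 0$. Substituting into the equation annihilates the convective term and leaves
\[
\tfrac{d}{dt}\dens_{\min}(t) = \epsilon\Delta\dens(\xcoord_0,t) - (\diver\mathbf{u})(\xcoord_0,t)\,\dens_{\min}(t) \ge -\,\|\diver\mathbf{u}\|_{L^\infty}\,\dens_{\min}(t),
\]
where the differentiation of the infimum is justified by an envelope (Danskin) argument and $\Delta\dens\ge 0$ discards the diffusion term. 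Grönwall's inequality then yields $\dens_{\min}(t)\ge \dens_{\min}(0)\exp\!\big(-\!\int_0^t\|\diver\mathbf{u}\|_{L^\infty}\,\mathrm{d}s\big)$, which is strictly positive for all finite $t$ whenever the initial density is bounded away from zero.

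To sidestep the delicate justification of differentiating the infimum, I would alternatively pass to the weighted variable $w := e^{\lambda t}\dens$ with $\lambda > \|\diver\mathbf{u}\|_{L^\infty}$, so that $w$ solves $\partial_t w - \epsilon\Delta w + \mathbf{u}\cdot\nabla w + (\diver\mathbf{u}+\lambda)\,w = 0$ with a strictly positive reaction coefficient. The weak parabolic minimum principle then forbids a nonpositive interior minimum, and the strong minimum principle upgrades this to strict positivity for $t>0$.

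The only genuine difficulty is the sign-indefiniteness of the reaction term $\diver\mathbf{u}$: under compression $\diver\mathbf{u}<0$, density can decay, and the bare maximum principle for $\partial_t-\epsilon\Delta$ does not apply directly. Both the Grönwall and the exponential-weight devices are precisely the standard way to absorb this term using only its $L^\infty$ bound, which is where the smoothness and boundedness assumptions enter. A secondary technical point is the unbounded domain $\mathbb{R}^d$: boundedness of the solution (with suitable decay or periodicity of the data) is what ensures either that the infimum is attained or that the comparison applies against the constant state at infinity, so that no spurious minimum escapes to infinity. Crucially, none of these steps involves $\Bfield$, which is exactly why removing the viscous regularization of the induction equation in \eqref{viscousConsBfieldDiv} leaves the positivity of $\dens$ untouched.
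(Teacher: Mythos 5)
Your proof is correct and is essentially the same argument the paper relies on: the paper gives no proof of its own, remarking only that the result depends solely on the regularized mass equation and deferring to \cite{Guer2014, Dao2022a}, where precisely your parabolic minimum-principle/Gr\"{o}nwall argument (velocity treated as a given smooth, bounded field, with $\Bfield$ playing no role) is carried out. One cosmetic slip: for the reaction coefficient $\diver{}\mathbf{u}+\lambda$ to come out positive, the weight must be $w:=e^{-\lambda t}\dens$ rather than $e^{\lambda t}\dens$; everything else stands.
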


The proof of Lemma~\ref{lem:DensityPositivity} merely depends on the mass
equation \eqref{viscousConsMassDiv}. This is a well-known result for which
detailed proof can be found in \cite{Guer2014}.

% In simple words,
% positivity of density can be guaranteed if the corresponding elliptic term
% $\epsilon\Delta\rho$ is added for any given $\epsilon > 0$.

%% Next, we introduce the following notations,
%% \[
%%   \begin{aligned}
%%     \bl & = \epsilon\nabla(\rho e), &&& \polG &= \epsilon\rho\nabla^s\vel,\\
%%     \bh & = \bl-\frac{|\vel|^2}{2}(\epsilon\Delta\rho), &&& \polg & =
%     \polG+(\epsilon\Delta\rho)\otimes\vel.\\
%%   \end{aligned}
%% \]

\begin{lemma}[Minimum principle of the specific
entropy]\label{lem:MinimumEntropyPrinciple}
Assume sufficient smoothness and that the density and the internal energy
uniformly converge to constant states outside a compact domain of interest. The
minimum entropy principle holds:
\begin{align*}
    \inf_{\xcoord\in\mathbb{R}^d}s(\rho(\xcoord, t), e(\xcoord, t)) \geq
    \inf_{\xcoord\in\mathbb{R}^d}s_0(\xcoord),
\end{align*}
where $s (\rho, e)$ is the specific entropy, see Definition \ref{CompleteEOS},
and $s_0$ is the initial specific entropy.
\end{lemma}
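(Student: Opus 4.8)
The plan is to reduce the minimum entropy principle for the regularized system \eqref{MHDsystemDivViscosity} to the known result for the viscously-regularized compressible Euler equations established in \cite{Dao2022a}. The guiding observation is that the specific entropy $s(\dens,\specinte)$ depends on the state only through $\dens$ and the specific internal energy $\specinte = \dens^{-1}\inte$, where $\inte = \totme - \tfrac{1}{2\dens}|\mom|^2$ and $\totme := \tote - \tfrac{1}{2\mu}|\Bfield|^2$ is the mechanical energy. Since the viscous term on the right-hand side of \eqref{viscousConsMechEDiv} is exactly $\epsilon\Delta\totme$ and \eqref{viscousConsBfieldDiv} carries no viscosity, the triple $(\dens,\mom,\totme)$ is regularized precisely as the conserved variables of a parabolic viscous Euler system; the magnetic field enters only through the magnetic stress in \eqref{viscousConsMomDiv} and the Poynting flux in \eqref{viscousConsMechEDiv}. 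I would first make this structure explicit.

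Second, I would derive the transport--diffusion equation for $\inte$ and show the magnetic field cancels out. Writing $u = \dens^{-1}\mom$ and contracting the momentum equation \eqref{viscousConsMomDiv} with $u$ yields the kinetic-energy balance, in which the divergence of the magnetic stress $-\mu^{-1}\Bfield\Bfield^\transp + \tfrac{1}{2\mu}|\Bfield|^2\mathbb{I}$ produces the Lorentz rate-of-work. Forming $\partial_t\inte = \partial_t\totme - \partial_t(\tfrac{1}{2\dens}|\mom|^2)$ and eliminating $\mu^{-1}\Bfield\cdot\partial_t\Bfield$ through the induction equation \eqref{viscousConsBfieldDiv}, the Poynting flux, the magnetic rate-of-work, and the magnetic-energy time derivative cancel identically. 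This is the familiar ideal-MHD fact that the magnetic field exchanges energy with the kinetic reservoir only, never with the internal one. I expect this cancellation --- together with the careful bookkeeping of the viscous cross-terms $\epsilon\,u\cdot\Delta\mom$ and $\epsilon\Delta\totme$ --- to be the main obstacle: one must confirm that the surviving $\epsilon$-contributions reassemble into exactly the internal-energy viscous terms of the pure Euler system, with $\Bfield$ entirely absent. The deliberate choices of diffusing $\totme$ rather than $\tote$ in \eqref{viscousConsMechEDiv}, and of leaving \eqref{viscousConsBfieldDiv} inviscid, are precisely what make this possible.

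Third, with magnetic-free equations for $\dens$ and $\specinte$ in hand, I would pass to the specific entropy via Gibbs' relation from Definition \ref{CompleteEOS}. Substituting the density and internal-energy equations into the material derivative $\dens\,s_e(\partial_t s + u\cdot\nabla s)$, the convective and pressure terms assemble into the exact entropy transport, while the viscous terms split into a diffusion operator acting on $s$ plus a production term. Since $s_e = \theta^{-1}>0$ and $-s$ is strictly convex (both assumed in Definition \ref{CompleteEOS}), the production term is a nonnegative quadratic form in $(\nabla\dens,\nabla\specinte)$, so $s$ obeys a convection--diffusion inequality of the form $\partial_t s + u\cdot\nabla s - \dens^{-1}\diver{}(\epsilon\,\dens\,\nabla s) \geq 0$ with no zeroth-order sink --- identical to the inequality obtained for the Euler subsystem in \cite{Dao2022a}, the only new input being the cancellation of the previous step.

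Finally, I would invoke a parabolic minimum principle. Because $s$ satisfies a linear (in $s$) degenerate-parabolic inequality without a zeroth-order term, its spatial infimum cannot decrease in time. The hypothesis that $\dens$ and $e$ converge uniformly to constant states outside a compact set guarantees that $s$ is asymptotically constant, so the relevant infima are attained and the comparison argument closes, yielding $\inf_{\xcoord}s(\dens(\xcoord,t),e(\xcoord,t)) \geq \inf_{\xcoord}s_0(\xcoord)$ for all $t>0$.
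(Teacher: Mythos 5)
Your proposal follows essentially the same route as the paper's own proof: the paper likewise forms the kinetic-energy balance (momentum equation contracted with $\vel$) and the magnetic-energy balance (induction equation contracted with $\Bfield$), subtracts both from \eqref{viscousConsMechEDiv} to obtain an internal-energy equation in which all $\Bfield$-terms cancel, then passes to $s$ via Gibbs' relation and the chain rule, signs the production terms using $s_e>0$ and the strict convexity of $-s$ (the $J\leq 0$ step, citing \cite{Dao2022a}), and concludes with the same parabolic minimum-principle argument, treating the infimum-at-infinity case by the uniform-convergence hypothesis. The cancellation you single out as the main obstacle is indeed exactly what the paper verifies, so the plan is sound and matches the published argument.
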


\begin{proof}
Multiplying \eqref{viscousConsMomDiv} with $\vel$ gives
\begin{align}
  \begin{split}\label{proof:MinimumEntropy:m}
    &\rho\left(\partial_t\left(\tfrac{1}{2}|\vel|^2\right)
      +\vel\cdot\nabla\left(\tfrac{1}{2}|\vel|^2\right)\right)
    +|\vel|^2\epsilon\Delta\rho+\vel\cdot\nabla p  \\
    & \ \ \ -\vel\cdot\diver{}\left(\mu^{-1}\Bfield \Bfield^\transp
     - \tfrac{1}{2\mu} \mathbb{I}|\Bfield|^2\right)
     -\epsilon\vel\cdot\Delta\mom = 0.
  \end{split}
\end{align}
Multiplying \eqref{viscousConsBfieldDiv} with $\Bfield$ gives
\begin{equation}\label{proof:MinimumEntropy:B}
  \rho\left(\partial_t\left(\tfrac{\rho^{-1}|\Bfield|^2}{2}\right)
  +\vel\cdot\nabla\left(\tfrac{\rho^{-1}|\Bfield|^2}{2}\right)\right)
  +\tfrac{\rho^{-1}
    |\Bfield|^2}{2}\epsilon\Delta\rho
  + \tfrac{\rho^{-1}|\Bfield|^2}{2}\rho\diver{}\vel
  - \Bfield\cdot(\Bfield\cdot\nabla)\vel = 0.
\end{equation}
Subtracting \eqref{proof:MinimumEntropy:m} and \eqref{proof:MinimumEntropy:B}
from \eqref{viscousConsMechEDiv} gives
\begin{equation}\label{proof:MinimumEntropy:e}
  \rho(\partial_t e+ \vel\cdot\nabla e)
  +\left(e- \tfrac{1}{2}|\vel|^2\right)\epsilon\Delta\rho
  + p\diver{}\vel-\epsilon\Delta
  \left(\rho e + \tfrac{1}{2\rho}|\mom|^2\right)
  + \epsilon\vel\cdot\Delta\mom = 0,
\end{equation}
which describes the evolution of the internal energy. Multiplying the mass
equation with $\rho s_{\rho}$, multiplying \eqref{proof:MinimumEntropy:e} with
$s_e$, adding them together, then applying the chain rules $\nabla s =
s_e\nabla
e + s_{\rho}\nabla\rho$ and $\partial_t s = s_e\partial_t e +
s_{\rho}\partial_t\rho$, we obtain the entropy conservation equation
\begin{equation}\label{proof:MinimumEntropy:s}
  \rho(\partial_ts + \vel\cdot\nabla s)+(e s_e-\rho
s_{\rho})\epsilon\Delta\rho
  + s_e\left(-\tfrac{1}{2}|\vel|^2\epsilon\Delta\rho-\epsilon\Delta
    \left(\rho e +
      \tfrac{1}{2\rho}|\mom|^2\right)+\epsilon\vel\cdot\Delta\mom\right)
  = 0.
\end{equation}
Let $\ell\coloneqq s_e^{-1}(e s_e - \rho s_{\rho})\epsilon\nabla\rho
+ \epsilon\rho s_e^{-1}\nabla s$. We can rewrite
\eqref{proof:MinimumEntropy:s}
as
\begin{equation}\label{proof:MinimumEntropy:s2}
  \rho(\partial_ts + \vel\cdot\nabla s)+(e s_e-\rho
s_{\rho})\epsilon\Delta\rho-s_e\diver{}\ell-s_e\epsilon\rho\nabla\vel:\nabla\vel
  = 0.
\end{equation}
Let $J\coloneqq -(\epsilon\nabla\rho)\cdot\nabla(e s_e-\rho
s_{\rho})+\ell\cdot\nabla s_e + \epsilon\nabla\rho\cdot\nabla s$. It can be
proved that $J \leq 0$, which follows from the strict convexity of $- s$,
see \cite[Lemma 3]{Dao2022a}. Therefore, we
rewrite \eqref{proof:MinimumEntropy:s2} as
\begin{align}\label{proof:MinimumEntropy:s3}
  \rho(\partial_t s + \vel\cdot\nabla s) - \diver{}(\epsilon\rho\nabla s) -
  \epsilon\nabla\rho\cdot\nabla s = -J + s_e(\epsilon\rho\nabla\vel) :
\nabla\vel,
\end{align}
where the right hand side is non-negative. In the regular case when
$\inf_{\xcoord\in\mathbb{R}^d}s(\xcoord, t)$ is reached at, let us say
$\bar{\xcoord}(t)$, inside a compact domain $\Omega \subset \mathbb{R}^d$, we
have $\nabla s(\bar{\xcoord}, t) = 0$ and $\Delta s (\bar{\xcoord}, t) \geq 0$
due to smoothness. From \eqref{proof:MinimumEntropy:s3}, we have that
$\partial_t s(\bar{\xcoord}, t) \geq 0$ since $\rho > 0$. This says that
$\inf_{\xcoord\in\mathbb{R}^d}s(\xcoord, t)$ is always increasing in time.
Therefore, we have the conclusion. However, if
$\inf_{\xcoord\in\mathbb{R}^d}s(\xcoord, t)$ is reached at
$\xcoord\rightarrow\infty$, then we have
$\inf_{\xcoord\in\mathbb{R}^d}s(\xcoord, t) = x^* \geq
\inf_{\xcoord\in\mathbb{R}^d} s_0(\xcoord)$ where
$\inf_{\xcoord\in\mathbb{R}^d}s(\xcoord, t)\rightarrow x^*$ as $\xcoord
\rightarrow \infty$ due to the uniform convergence assumption. The proof is
complete.
\end{proof}

Let $f(s)$ be a twice differentiable function with $s(\state) = s(\rho,
e(\state))$ being the specific entropy. Consider a class of strictly convex
functions $\eta(\state) = -\rho f(s(\state))$ which are known as generalized
Harten entropies. The following lemma is a direct consequence of
Lemma~\ref{lem:MinimumEntropyPrinciple}.

\begin{lemma}[Generalized entropy inequalities]\label{lem:generalized_entropies}
  Any smooth solution to \eqref{MHDsystemDivViscosity} satisfies the entropy
  inequality
  \begin{align*}
    \partial_t(\rho f(s)) + \mathrm{div}(\vel \rho f(s)
    - \epsilon\rho\nabla f(s) - \epsilon f(s)\nabla\rho) \geq 0.
  \end{align*}
\end{lemma}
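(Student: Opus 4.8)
The plan is to derive the inequality directly from the specific-entropy balance established inside the proof of Lemma~\ref{lem:MinimumEntropyPrinciple}. The single fact I would carry over is equation \eqref{proof:MinimumEntropy:s3}, which I rewrite in the compact form
\begin{equation*}
  \rho(\partial_t s + \vel\cdot\nabla s)
  = \diver{}(\epsilon\rho\nabla s) + \epsilon\nabla\rho\cdot\nabla s + R,
  \qquad
  R := -J + s_e(\epsilon\rho\nabla\vel):\nabla\vel \geq 0 ,
\end{equation*}
where $R \geq 0$ is exactly the non-negativity of the right-hand side of \eqref{proof:MinimumEntropy:s3}. Everything then reduces to multiplying this balance by $f'(s)$ and reorganizing it, with the help of the mass equation, into a balance law for $\rho f(s)$.

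Concretely, first I would form $\partial_t(\rho f(s)) + \diver{}(\vel\rho f(s))$, use $\mom = \rho\vel$, the chain rule $\nabla f(s) = f'(s)\nabla s$, and the mass equation \eqref{viscousConsMassDiv} to eliminate $\partial_t\rho$ and $\diver{}\mom$; this collapses the left-hand side to
\begin{equation*}
  \partial_t(\rho f(s)) + \diver{}(\vel\rho f(s))
  = \rho f'(s)(\partial_t s + \vel\cdot\nabla s) + \epsilon f(s)\Delta\rho .
\end{equation*}
Substituting the entropy balance above for $\rho(\partial_t s + \vel\cdot\nabla s)$ replaces the first term by $f'(s)\big(\diver{}(\epsilon\rho\nabla s) + \epsilon\nabla\rho\cdot\nabla s + R\big)$.

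Next I would expand the two dissipative fluxes appearing in the claimed statement by the product rule,
\begin{align*}
  \diver{}(\epsilon\rho\nabla f(s)) &= f'(s)\diver{}(\epsilon\rho\nabla s) + \epsilon\rho f''(s)|\nabla s|^2, \\
  \diver{}(\epsilon f(s)\nabla\rho) &= \epsilon f(s)\Delta\rho + \epsilon f'(s)\nabla\rho\cdot\nabla s,
\end{align*}
and subtract these from the previous expression. The terms $f'(s)\diver{}(\epsilon\rho\nabla s)$, $\epsilon f(s)\Delta\rho$ and $\epsilon f'(s)\nabla\rho\cdot\nabla s$ cancel identically, leaving
\begin{equation*}
  \partial_t(\rho f(s)) + \diver{}\big(\vel\rho f(s) - \epsilon\rho\nabla f(s) - \epsilon f(s)\nabla\rho\big)
  = f'(s)R - \epsilon\rho f''(s)|\nabla s|^2 .
\end{equation*}
It then remains only to check the sign of the right-hand side: since $R \geq 0$ and $\epsilon,\rho>0$, the two contributions are non-negative as soon as $f' \geq 0$ and $f'' \leq 0$, i.e. $f$ is increasing and concave. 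These are precisely the admissibility conditions imposed on the generalized Harten entropies considered here, and, together with $s_e>0$ and the strict concavity of $-s$ from Definition~\ref{CompleteEOS}, they also guarantee the strict convexity of $\eta = -\rho f(s)$ assumed in the statement.

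I expect the only delicate points to be organizational rather than conceptual. The bulk of the work is the product-rule bookkeeping needed to see that all flux and cross terms cancel and that exactly $f'(s)R - \epsilon\rho f''(s)|\nabla s|^2$ survives; this is mechanical once \eqref{proof:MinimumEntropy:s3} is in hand. The one genuinely structural step is to pin down the sign conditions on $f$: I would make explicit that the increasing/concave requirement on $f$ is what converts the identity above into the desired inequality, and note that this requirement is consistent with (indeed sufficient for) the convexity of $\eta$ under the thermodynamic restrictions of Definition~\ref{CompleteEOS}.
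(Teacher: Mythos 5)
Your algebraic derivation is sound and is, in substance, the paper's own computation reorganized: the paper likewise multiplies \eqref{proof:MinimumEntropy:s3} by $f'(s)$ and combines with the mass equation to obtain
\begin{align*}
  \partial_t(\rho f(s)) + \diver{}\big(\rho\vel f(s) - \epsilon\rho\nabla f(s) - \epsilon f(s)\nabla\rho\big)
  = -\epsilon\rho f''(s)|\nabla s|^2 - J f'(s) + f'(s)\,s_e(\epsilon\rho\nabla\vel):\nabla\vel ,
\end{align*}
whose right-hand side is exactly your $f'(s)R - \epsilon\rho f''(s)|\nabla s|^2$ with $R = -J + s_e(\epsilon\rho\nabla\vel):\nabla\vel$. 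Up to that identity the two proofs agree.

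The genuine gap is in your sign analysis. The lemma is stated for \emph{all} $f$ such that $\eta = -\rho f(s)$ is strictly convex (the generalized Harten class), whereas your conclusion requires $f' \geq 0$ \emph{and} $f'' \leq 0$, and you assert these are ``precisely'' the admissibility conditions. They are not: Harten's characterization of strict convexity of $-\rho f(s)$ for a $\gamma$-law gas is $f' > 0$ together with $f'/c_p - f'' > 0$ ($c_p$ the specific heat at constant pressure), which permits $f'' > 0$; for instance $f(s) = \mathrm{e}^{ks}$ with $0 < k < 1/c_p$ is a legitimate Harten entropy. For such $f$ your term-by-term argument collapses, since $-\epsilon\rho f''(s)|\nabla s|^2 < 0$ wherever $\nabla s \neq \bzero$ and nothing in your proof absorbs it; concavity of $f$ is sufficient for convexity of $\eta$, as you note, but it is strictly stronger than the lemma's hypothesis. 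This is exactly why the paper does not split the two terms: it invokes \cite[Theorems 3.4, 3.5]{Dao2022a} to establish the \emph{joint} inequality $-\epsilon\rho f''(s)|\nabla s|^2 - J f'(s) \geq 0$ together with $f'(s) > 0$, i.e., the quantitative negativity of $J$ (not merely $J \leq 0$) dominates the adverse $f''$ contribution precisely under Harten's convexity condition. As written, your proof establishes the inequality only for the proper subclass of increasing concave $f$; to prove the lemma as stated you must replace the term-by-term sign check by this joint estimate (or reproduce its proof from the convexity of $-\rho f(s)$).
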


\begin{proof}
  Multiplying both sides of \eqref{proof:MinimumEntropy:s3} with $f'(s)$ gives
  \begin{equation}\label{proof:EntropyInequalities:s}
    \begin{aligned}
      \rho(\partial_t f(s) + \vel\cdot\nabla f(s))&-\diver{}(\epsilon\rho\nabla
      f(s)) + \epsilon\rho f''(s)|\nabla s|^2 - \epsilon f'(s)
\nabla\rho\cdot\nabla s
      =\\& -J f'(s)
      + f'(s)s_e(\epsilon\rho\nabla\vel):\nabla\vel.
    \end{aligned}
  \end{equation}
  Multiplying the mass equation with $\rho$ and add it to
  \eqref{proof:EntropyInequalities:s}, we have
  \begin{align*}
    \partial(\rho f(s)) + \diver{}(\rho\vel f(s)) & -
                                                    \diver{}(\epsilon\rho\nabla
f(s) + \epsilon f(s)\nabla\rho) = \\
                                                  & - \epsilon\rho f''(s)|\nabla
s|^2 - J f'(s)
                                                    +
f'(s)s_e(\epsilon\rho\nabla\vel):\nabla\vel.
  \end{align*}
  By the strict convexity of $-\rho f(s)$, we can show that $- \epsilon\rho
  f''(s)|\nabla s|^2 - J f'(s) \geq 0$ and $f'(s) > 0$, see \cite[Theorem 3.4,
  3.5]{Dao2022a}. By the assumption that
  the temperature is positive, we have $s_e > 0$. Therefore, the inequality of
  the lemma always holds true.
\end{proof}

%%%%%%%%%%%%%%%%%%%%%%%%%%%%%%%%%%%%%%%%%%%%%%%%%%%%%%%%%%%%%%%%%%%%%%%%%%%%%%%%
%%%%%%%%%%%%%%%%%%%%%%%%%%%%%%%%%%%%%%%%%%%%%%%%%%%%%%%%%%%%%%%%%%%%%%%%%%%%%%%%
%%%%%%%%%%%%%%%%%%%%%%%%%%%%%%%%%%%%%%%%%%%%%%%%%%%%%%%%%%%%%%%%%%%%%%%%%%%%%%%%
%%%%%%%%%%%%%%%%%%%%%%%%%%%%%%%%%%%%%%%%%%%%%%%%%%%%%%%%%%%%%%%%%%%%%%%%%%%%%%%%
%%%%%%%%%%%%%%%%%%%%%%%%%%%%%%%%%%%%%%%%%%%%%%%%%%%%%%%%%%%%%%%%%%%%%%%%%%%%%%%%

\subsection{Energy balance of the non-divergence formulation}\label{sec:energy}

\begin{proposition}[Total energy-balance] The MHD system \eqref{MHDsystem}
  satisfies the following formal energy-flux balance:
  \begin{align}
    \label{EnergyBal}
    \partial_t \int_{\Omega} \totme + \tfrac{\mu}{2}|\Hfield|^2 \dx
    + \int_{\partial\Omega} (\totme + p) \tfrac{\mom}{\dens}\cdot \normal
    - \mu (\tfrac{\mom}{\dens} \times \Hfield)\cdot(\Hfield \times \normal)
    \ds &= 0
  \end{align}
\end{proposition}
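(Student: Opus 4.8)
The plan is to establish the balance by combining the integrated mechanical-energy equation \eqref{consMechE} with an energy identity extracted from the induction equation \eqref{consBfield}, arranged so that the Lorentz source terms cancel and only boundary fluxes survive. Throughout I write $\vel := \tfrac{\mom}{\dens}$ for the velocity and assume enough smoothness and decay that the divergence theorem applies.

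First I would integrate \eqref{consMechE} over $\Omega$ and apply the divergence theorem to the transport flux, which yields
\begin{equation*}
\partial_t \int_\Omega \totme \dx + \int_{\partial\Omega} (\totme + p)\, \vel\cdot\normal \ds = -\mu \int_\Omega (\Hfield \times \curl{}\Hfield)\cdot \vel \dx .
\end{equation*}
Next I would take the inner product of \eqref{consBfield} with $\mu\Hfield$ and integrate over $\Omega$. Since $\mu\Hfield\cdot\partial_t\Hfield = \partial_t(\tfrac{\mu}{2}|\Hfield|^2)$, this produces the magnetic-energy evolution
\begin{equation*}
\partial_t \int_\Omega \tfrac{\mu}{2}|\Hfield|^2 \dx = \mu \int_\Omega \Hfield\cdot \curl{}(\vel\times\Hfield) \dx .
\end{equation*}

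The crux is to rewrite this right-hand side using the vector identity $\Hfield\cdot\curl{}(\vel\times\Hfield) = \diver{}\big((\vel\times\Hfield)\times\Hfield\big) + (\vel\times\Hfield)\cdot\curl{}\Hfield$. Applying the divergence theorem to the first term and the scalar triple product $(\mathbf{a}\times\mathbf{b})\cdot\mathbf{c} = \mathbf{a}\cdot(\mathbf{b}\times\mathbf{c})$ twice — once to turn the boundary integrand into $(\vel\times\Hfield)\cdot(\Hfield\times\normal)$, and once to recognize $(\vel\times\Hfield)\cdot\curl{}\Hfield = (\Hfield\times\curl{}\Hfield)\cdot\vel$ — recasts the magnetic-energy balance as
\begin{equation*}
\partial_t \int_\Omega \tfrac{\mu}{2}|\Hfield|^2 \dx = \mu \int_{\partial\Omega} (\vel\times\Hfield)\cdot(\Hfield\times\normal) \ds + \mu \int_\Omega (\Hfield\times\curl{}\Hfield)\cdot\vel \dx .
\end{equation*}
Adding the two balances, the two interior integrals $\pm\mu\int_\Omega(\Hfield\times\curl{}\Hfield)\cdot\vel\dx$ cancel exactly, leaving only the claimed boundary fluxes and establishing \eqref{EnergyBal}.

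I expect the only genuine obstacle to be the sign-and-permutation bookkeeping in the two scalar-triple-product manipulations: one must verify that the Lorentz source appearing in \eqref{consMechE} reappears with the opposite sign out of the curl term of the induction equation, and that the divergence contribution reorganizes into precisely the surface integrand $\mu(\vel\times\Hfield)\cdot(\Hfield\times\normal)$ written in \eqref{EnergyBal}. All steps are formal; notably, no divergence-free hypothesis on $\Hfield$ enters the argument, which is consistent with the non-divergence philosophy underlying \eqref{MHDsystem}.
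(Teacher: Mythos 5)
Your proposal is correct and follows essentially the same route as the paper's proof: integrate \eqref{consMechE}, multiply \eqref{consBfield} by $\mu\Hfield$, move the curl off $\tfrac{\mom}{\dens}\times\Hfield$ onto $\Hfield$ (your identity $\Hfield\cdot\curl{}(\tfrac{\mom}{\dens}\times\Hfield) = \diver{}\big((\tfrac{\mom}{\dens}\times\Hfield)\times\Hfield\big) + (\tfrac{\mom}{\dens}\times\Hfield)\cdot\curl{}\Hfield$ plus the divergence theorem is exactly the integration-by-parts formula the paper invokes), and cancel the Lorentz work terms via the scalar triple product. The sign bookkeeping you flag as the only risk indeed works out as you stated.
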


\begin{proof} Integrating \eqref{consMechE} in space and using the divergence
  theorem we get
  \begin{align}
    \label{ProfTotMe}
    \int_{\Omega} \partial_t \totme \dx +
    \int_{\partial\Omega} (\totme + p) \tfrac{\mom}{\dens}\cdot \normal\ds
    + \mu  \int_{\Omega} (\Hfield \times  \curl{}\Hfield) \cdot
    \tfrac{\mom}{\dens} \dx &= \bzero \, .
  \end{align}
  Multiplying \eqref{consBfield} by $\mu \Hfield$, using integration by
  parts formula
  \begin{align*}
    \int_{\Omega} \curl{}\boldsymbol{u} \cdot \vel \dx= \int_{\partial\Omega}
    (\boldsymbol{u} \times \vel)\cdot \normal \ds+ \int_{\Omega} \boldsymbol{u}
\cdot
    \curl{}\vel \dx \, ,
  \end{align*}
  and reorganizing the terms we get:
  \begin{align}
    \label{ProfB}
    \partial_t \int_{\Omega} \tfrac{\mu}{2}|\Hfield|^2 \dx
    - \mu \int_{\partial\Omega} [(\tfrac{\mom}{\dens} \times \Hfield)\times
    \Hfield]\cdot \normal \ds
    - \mu \int_{\Omega} (\tfrac{\mom}{\dens} \times \Hfield)\cdot
    \curl{}\Hfield \dx  = 0 \, .
  \end{align}
  Using the property $
  \curl{}\Hfield \cdot (\tfrac{\mom}{\dens} \times \Hfield) =
  \tfrac{\mom}{\dens} \cdot (\Hfield \times \curl{}\Hfield ) \, ,
  $ and inserting this identity into \eqref{ProfB} yields
  \begin{align}\label{ReplcementRhs}
    \partial_t \int_{\Omega} \tfrac{\mu}{2}|\Hfield|^2 \dx
    - \mu \int_{\Omega} (\Hfield \times \curl{}\Hfield )\cdot
    \tfrac{\mom}{\dens} \dx
    - \mu \int_{\partial\Omega} [(\tfrac{\mom}{\dens} \times \Hfield)\times
    \Hfield]\cdot \normal \ds = 0.
  \end{align}
  Finally, adding \eqref{ReplcementRhs} to \eqref{ProfTotMe}, and using
  properties of the triple product yields the desired result.
\end{proof}

\begin{remark}[Energy conservation and boundary conditions]\label{BundaryIso}
  As noted in the introduction, non-divergence formulation \eqref{MHDsystem} and
  divergence formulation \eqref{MHDsystemDiv} should be treated as
  different models. As such, each formulation accommodates
  complementary set of boundary conditions. We start by noting that the
  total energy considered in \eqref{EnergyBal} is $\int_{\Omega} \totme +
  \tfrac{\mu}{2}|\Hfield|^2 \dx$ is not a linear functional but rather the sum
  of a linear and a quadratic functional. As usual, conservation of total
  energy $\int_{\Omega} \totme + \tfrac{\mu}{2}|\Hfield|^2 \dx$ holds for
  the trivial case of periodic boundary conditions. Inspecting identity
  \eqref{EnergyBal}, we note that another simple scenario where total energy is
  conserved is when $\mom\cdot\normal\equiv 0$ and $\Hfield \times \normal
  \equiv \bzero$ on the entirety of the boundary. Tangent boundary conditions,
  such as $\Hfield \times \normal \equiv\bzero$, can be enforced in the
  curl-conforming framework such as the Sobolev space $H(\curl{})$, see for
  instance \cite{Monk2003, Ciarlet2018}, which will be used to discretize
  $\Hfield$. Note that the normal boundary conditions $(\mu \Hfield)\cdot
  \normal = 0$, traditionally used for the divergence formulation
  \eqref{MHDsystemDiv}, are not meaningful in the context of non-divergence
  model \eqref{MHDsystem}.
\end{remark}

\begin{remark}[Simplifying assumptions] In the remainder of the paper, in order
  to simplify arguments in relationship to boundary conditions, we assume that
  periodic boundary conditions are used, or that the initial data is compactly
  supported and that the final time is small enough to prevent waves from
reaching
  the boundary. Alternatively, we could assume that boundary conditions
  $\mom\cdot\normal \equiv 0$ and $\Hfield \times \normal \equiv \bzero$ are
used
  on the entirety of the boundary.
\end{remark}

%%%%%%%%%%%%%%%%%%%%%%%%%%%%%%%%%%%%%%%%%%%%%%%%%%%%%%%%%%%%%%%%%%%%%%%%%%%%%%%%
%%%%%%%%%%%%%%%%%%%%%%%%%%%%%%%%%%%%%%%%%%%%%%%%%%%%%%%%%%%%%%%%%%%%%%%%%%%%%%%%
%%%%%%%%%%%%%%%%%%%%%%%%%%%%%%%%%%%%%%%%%%%%%%%%%%%%%%%%%%%%%%%%%%%%%%%%%%%%%%%%
%%%%%%%%%%%%%%%%%%%%%%%%%%%%%%%%%%%%%%%%%%%%%%%%%%%%%%%%%%%%%%%%%%%%%%%%%%%%%%%%

\subsection{Euler's equation with forces}
Consider Euler's system subject to the effect of a force, that is
\begin{align}
  \label{EulerWithSources}
  \tfrac{\partial}{\partial t}\state + \diver{}
  \flux(\state) = \bv{s}(\force),
\end{align}
with
\begin{align}\label{EulerForceSystem}
  \state = \begin{bmatrix}
    \rho \\
    \mom \\
    \totme
  \end{bmatrix} \ , \ \
  \flux(\state) = \begin{bmatrix}
    \mom^\transp \\
    \dens^{-1} \mom \mom^\transp + \mathbb{I} p \\
    \dens^{-1 } \mom^\transp (\totme + p)
  \end{bmatrix} \ , \ \
  \bv{s}(\force) =
  \begin{bmatrix}
    0 \\ \force \\ \dens^{-1}\mom \cdot \force
  \end{bmatrix}.
\end{align}
In particular, system \eqref{consMass}-\eqref{consMechE} can be rewritten as in
\eqref{EulerWithSources}-\eqref{EulerForceSystem} using the particular choice of
force $\force = - \mu  \Hfield \times \curl{}\Hfield$. For
any force $\force$ we have the property described in the following lemma.

\begin{lemma}[Invariance of entropy-like functionals
\cite{TomasMaier2022}]\label{lem:InteInvar} Let
  $\state = [\dens, \mom, \totme] \in \mathbb{R}^{d+2}$ be the state of Euler's
  system. Let
  $\Psi(\state):\mathbb{R}^{d+2} \rightarrow
  \mathbb{R}$ be \underline{any} arbitrary functional of the state satisfying
the
  functional dependence $\Psi(\state):= \psi(\dens, \inte (\state))$, where
  $\inte (\state) := \totme - \frac{|\mom|^2}{2 \dens}$ is the internal
  energy per unit volume. Then we have that
  \begin{align}
    \label{resultinv}
    \nabla_{\state} \Psi(\state) \cdot \bv{s}(\force) \equiv 0 ,
  \end{align}
  where $\nabla_{\state}$ is the gradient with respect to the state, \ie
  $\nabla_{\state} = \big[\frac{\partial}{\partial\dens},
  \frac{\partial}{\partial\mom_1}, ...,
  \frac{\partial}{\partial\mom_d},
  \frac{\partial}{\partial\totme}\big]^\transp$.
\end{lemma}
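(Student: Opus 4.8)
The claim is that any state functional of the form $\Psi(\state)=\psi(\dens,\inte(\state))$ is annihilated by the source direction $\bv{s}(\force)$, and this should hold for \emph{every} $\force$. Since $\force$ enters $\bv{s}(\force)$ only through the momentum and energy slots, namely $\bv{s}(\force)=[0,\force,\dens^{-1}\mom\cdot\force]^\transp$, the cleanest strategy is to compute $\nabla_{\state}\Psi$ explicitly and then pair it with $\bv{s}(\force)$ componentwise. The whole point is that the $\dens$-component of $\bv{s}$ vanishes, so the density-derivative of $\Psi$ never contributes; what remains is a cancellation between the momentum-block and the energy-slot, driven entirely by the structure of the internal energy $\inte(\state)=\totme-\frac{|\mom|^2}{2\dens}$.

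First I would apply the chain rule to $\Psi(\state)=\psi(\dens,\inte(\state))$, writing $\nabla_{\state}\Psi=\psi_\rho\,\nabla_{\state}\dens+\psi_e\,\nabla_{\state}\inte$, where $\psi_\rho$ and $\psi_e$ denote the partial derivatives of $\psi$ with respect to its first and second arguments. Then I would record the gradient of $\inte$ with respect to the full state: since $\inte$ depends on $\dens$, $\mom$, and $\totme$, one gets $\frac{\partial\inte}{\partial\dens}=\frac{|\mom|^2}{2\dens^2}$, $\frac{\partial\inte}{\partial\mom_i}=-\frac{\mom_i}{\dens}$, and $\frac{\partial\inte}{\partial\totme}=1$. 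The key observation is that $\nabla_{\state}\dens=[1,0,\dots,0,0]^\transp$ has zero entries in exactly the momentum and energy slots, so when dotted against $\bv{s}(\force)$ (whose first entry is $0$) the entire $\psi_\rho$ term drops out regardless of the value of $\force$.

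Next I would carry out the contraction $\nabla_{\state}\Psi\cdot\bv{s}(\force)$. Only the $\psi_e\,\nabla_{\state}\inte$ piece survives, and dotting it against $\bv{s}(\force)=[0,\force,\dens^{-1}\mom\cdot\force]^\transp$ gives
\begin{align*}
  \nabla_{\state}\Psi\cdot\bv{s}(\force)
  = \psi_e\Big(\sum_{i=1}^{d}\big(-\tfrac{\mom_i}{\dens}\big)\force_i
  + 1\cdot\tfrac{1}{\dens}\mom\cdot\force\Big)
  = \psi_e\Big(-\tfrac{1}{\dens}\mom\cdot\force+\tfrac{1}{\dens}\mom\cdot\force\Big)=0,
\end{align*}
which is exactly \eqref{resultinv}. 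The cancellation is forced by the fact that the energy-slot entry of $\bv{s}$, namely $\dens^{-1}\mom\cdot\force$, is precisely engineered to offset the momentum-slot contribution weighted by $\partial\inte/\partial\mom_i=-\mom_i/\dens$.

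I expect no genuine obstacle here; the result is essentially a structural identity rather than a deep theorem. The only point requiring mild care is bookkeeping the chain rule correctly for $\inte$ as a function of the \emph{conservative} variables (density, momentum, total energy) rather than the primitive ones, so that the factor $-\mom_i/\dens$ appears with the right sign in the momentum derivatives. Once those derivatives are tabulated, the arbitrariness of $\force$ plays no adverse role, since $\force$ factors out linearly and the bracketed coefficient vanishes identically; thus the identity holds for every $\force$, as asserted.
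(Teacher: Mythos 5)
Your proof is correct and follows essentially the same route as the paper's: chain rule on $\Psi=\psi(\dens,\inte(\state))$, explicit tabulation of $\nabla_{\state}\dens$ and $\nabla_{\state}\inte$, and the observation that the zero density-slot of $\bv{s}(\force)$ kills the $\psi_\rho$ term while the momentum and energy slots cancel exactly. Incidentally, your density derivative $\frac{\partial\inte}{\partial\dens}=\frac{|\mom|^2}{2\dens^2}$ is the correct value (the paper's displayed gradient omits the factor $\tfrac{1}{2}$), though this is immaterial since that entry is paired with the zero component of $\bv{s}(\force)$.
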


\begin{proof}
  Using the chain rule we observe that $\nabla_{\state} \Psi(\state) =
  \frac{\partial\psi}{\partial\dens} \nabla_{\state}\dens +
  \frac{\partial\psi}{\partial\inte} \nabla_{\state}\inte$, where
  \begin{align*}
    \nabla_{\state}\dens &= [1,0, ..., 0]^\transp \in \mathbb{R}^{d+2},
    \\
    \nabla_{\state}\inte &=
\big[\tfrac{|\mom|^2}{\dens^2},-\tfrac{\mom_1}{\dens}, ...,
                           -\tfrac{\mom_d}{\dens}, 1\big]^\transp \in
\mathbb{R}^{d+2}.
  \end{align*}

  Taking the product with $\bv{s}(\force)$ we get
  \begin{align*}
    \nabla_{\state} \Psi(\state) \cdot \bv{s}(\force)
    &= \frac{\partial\psi}{\partial\dens} \underbrace{\nabla_{\state}\dens
      \cdot \bv{s}(\force)}_{=\, 0} + \frac{\partial\psi}{\partial\inte}
      \nabla_{\state}\inte \cdot \bv{s}(\force)
    \\
    &= \frac{\partial\psi}{\partial\inte} (- \dens^{-1} \mom \cdot \force +
      \dens^{-1} \mom \cdot \force) = 0.
  \end{align*}
\end{proof}

\begin{remark}[Colloquial interpretation]\label{Colloq}
  Lemma \ref{lem:InteInvar} is simply saying that the evolution in time
  of an arbitrary functional $\Psi(\state)$ satisfying the functional dependence
  $\Psi(\state):= \psi(\dens, \inte (\state))$ is independent of the force
  $\force$. This follows directly by taking the dot-product of
  \eqref{EulerWithSources} with $\nabla_{\state}
  \Psi(\state)$ to get
  \begin{align*}
    \nabla_{\state}\Psi(\state)\cdot \tfrac{\partial}{\partial t}\state
    \;=\;
    \tfrac{\partial}{\partial t}\Psi(\state)
    \;=\;
    - \nabla_{\state} \Psi(\state) \cdot \diver{} \flux(\state) +
    \underbrace{\nabla_{\state} \Psi(\state) \cdot
    \bv{s}(\force)}_{\equiv\,0} .
  \end{align*}
  In particular, this holds true when $\Psi(\state):= \inte (\state)$.
  Similarly, we can apply Lemma \ref{lem:InteInvar} to the specific internal
  energy $\specinte(\state) = \dens^{-1} \inte(\state)$ since
  $\specinte(\state)$ satisfies the functional dependence $\specinte(\state) =
  \psi(\dens, \inte  (\state))$ as well. We also note that condition
  \eqref{resultinv} is related to the so-called ``complementary
  degeneracy requirements'' usually invoked in GENERIC systems, see
  \cite{Ottin1997}.
\end{remark}

%%%%%%%%%%%%%%%%%%%%%%%%%%%%%%%%%%%%%%%%%%%%%%%%%%%%%%%%%%%%%%%%%%%%%%%%%%%%%%%%
%%%%%%%%%%%%%%%%%%%%%%%%%%%%%%%%%%%%%%%%%%%%%%%%%%%%%%%%%%%%%%%%%%%%%%%%%%%%%%%%
%%%%%%%%%%%%%%%%%%%%%%%%%%%%%%%%%%%%%%%%%%%%%%%%%%%%%%%%%%%%%%%%%%%%%%%%%%%%%%%%
%%%%%%%%%%%%%%%%%%%%%%%%%%%%%%%%%%%%%%%%%%%%%%%%%%%%%%%%%%%%%%%%%%%%%%%%%%%%%%%%

% \newpage
\subsection{Splitting of the differential operator}

\begin{remark}[Choice of splitting] We consider the splitting for the system
  \eqref{MHDsystem} in two evolutionary operators:
\begin{gather}\label{Operator1}
\text{Operator \#1 }
\left\{
  \begin{split}
    \partial_t \dens + \diver{}\mom &= 0 \, , \\
    \partial_t \mom
    + \diver{}(\dens^{-1} \mom \mom^\transp + \mathbb{I} p) &=  \bzero \, , \\
    \partial_t \totme
    + \diver{}\big(\tfrac{\mom}{\dens} (\totme + p) \big) &= 0 \,
    , \\
    \partial_t \Hfield &= \bzero \, ,
  \end{split}\right.
\end{gather}
  and
  \begin{gather}\label{Operator2}
    \text{Operator \#2 }
    \left\{
      \begin{split}
        \partial_t \dens &= 0 \, , \\
        \partial_t \mom &=
        - \mu  \Hfield \times  \curl{}\Hfield  \, , \\
        \partial_t \totme &=
        - \mu  (\Hfield \times  \curl{}\Hfield) \cdot \tfrac{\mom}{\dens} \, ,
\\
        \partial_t \Hfield &= \curl{} (\vel \times \Hfield) \, .
      \end{split}\right.
  \end{gather}
\end{remark}

Given some initial data $\state^n = [\rho^n,\mom^n, \totme^n, \Hfield^n]$ for
each one of these operators, we would like to know what properties are
preserved by their evolution.

\begin{proposition}[Evolution of Operator \#1: preservation of linear
  invariants and pointwise stability properties] Assume periodic boundary
  conditions. Assume that the initial data at time $t_n$ is admissible, meaning
  $\state^n(\xcoord) = [\rho^n, \mom^n, \totme^n](\xcoord)\in \mathcal{A}$ for
  all $\xcoord \in \Omega$ with $\mathcal{A}$ defined as
  \begin{align}\label{DefAdmissibleA}
    \mathcal{A} = \big\{ [\rho, \mom, \totme]^\transp \in
    \mathbb{R}^{d+2}\, \big| \, \rho > 0 , \, \totme - \tfrac{1}{2}
    \tfrac{|\mom|^2}{\rho} > 0 \big\} \, .
  \end{align}
  Then, the evolution of Operator \#1 from time $t_n$ to $t_{n+1}$,
  preserves the following linear invariants
  \begin{align}
    \begin{split}\label{EulerConservation}
      \int_{\Omega} \dens^{n+1} \dx = \int_{\Omega} \dens^n \dx \ , \ \
      \int_{\Omega} \mom^{n+1} \dx = \int_{\Omega} \mom^n \dx \ , \\
      \int_{\Omega} \totme^{n+1} \dx = \int_{\Omega} \totme^n \dx \ , \ \
      \int_{\Omega} \Hfield^{n+1} \dx = \int_{\Omega} \Hfield^n \dx \ , \ \
    \end{split}
  \end{align}
  as well as the pointwise stability properties
  \begin{align}\label{EulerPointwise}
    \dens^{n+1}(\xcoord) \geq 0 \ , \ \ 
    s(\state^{n+1}(\xcoord)) \geq \min_{\xcoord \in \Omega} s(\state^n(\xcoord))
\ ,  \
    \varepsilon(\state^{n+1}(\xcoord))\geq 0 .
  \end{align}
  for all $\xcoord \in \Omega$.
\end{proposition}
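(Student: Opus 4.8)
The plan is to exploit the observation that Operator \#1 in \eqref{Operator1} is nothing but the standalone compressible Euler system for $[\dens,\mom,\totme]$ augmented with the trivial evolution $\partial_t \Hfield = \bzero$. Consequently every claim reduces either to a known property of the Euler subsystem or to a one-line argument for the frozen magnetic field. I would organize the argument into the two groups of claims, treating the linear invariants first and the pointwise stability properties second.

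For the linear invariants I would integrate each of the first three equations of \eqref{Operator1} over $\Omega$ and apply the divergence theorem. Under periodic boundary conditions the flux integrals over $\partial\Omega$ cancel, leaving $\frac{d}{dt}\int_\Omega \dens \dx = \frac{d}{dt}\int_\Omega \mom \dx = \frac{d}{dt}\int_\Omega \totme \dx = 0$; integrating in time from $t_n$ to $t_{n+1}$ yields the three stated identities. For the magnetic field the equation $\partial_t \Hfield = \bzero$ gives $\Hfield^{n+1}(\xcoord) = \Hfield^n(\xcoord)$ pointwise, so $\int_\Omega \Hfield^{n+1}\dx = \int_\Omega \Hfield^n \dx$ holds trivially. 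At the discrete level these identities persist provided the hyperbolic solver used to advance Operator \#1 is written in conservation (flux-difference) form, which I would take as a standing assumption on the solver.

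For the pointwise stability properties I would invoke the fact that, because $\Hfield$ contributes nothing to the $[\dens,\mom,\totme]$ subsystem of \eqref{Operator1}, Operator \#1 coincides with the compressible Euler system, whose admissible set $\mathcal{A}$ in \eqref{DefAdmissibleA} and minimum entropy principle are exactly the invariant-domain properties already established. Concretely, positivity of density follows from Lemma \ref{lem:DensityPositivity}, while positivity of internal energy and the bound $s(\state^{n+1}) \geq \min_{\xcoord} s(\state^n)$ follow from the vanishing-viscosity arguments of Lemma \ref{lem:MinimumEntropyPrinciple} and Lemma \ref{lem:generalized_entropies} specialized to the case in which $\Hfield$ plays no role, that is, the pure Euler limit. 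At the fully discrete level I would instead cite the assumed invariant-domain-preserving property of the chosen hyperbolic solver, as in \cite{Guer2018, Perthame1996, Zhang2010}, which by construction maps $\mathcal{A}$ into $\mathcal{A}$ and satisfies a discrete minimum entropy principle.

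The main obstacle, and the only place where genuine work is hidden, is the pointwise entropy and internal-energy bound: unlike the linear invariants, these do not follow from the conservation form alone but require the invariant-domain structure of the Euler subsystem. For the continuous flow this is the content of the vanishing-viscosity analysis, resting on the strict convexity of $-s$ and on the sign of the dissipation term $J$; for the discrete flow it is precisely the hypothesis that the underlying hyperbolic solver is invariant-domain preserving. I would therefore make this assumption on the solver explicit and emphasize that the passage from the continuous Euler properties to the stated discrete bounds is exactly what the chosen solver is required to guarantee, while the frozen magnetic field introduces no further complication.
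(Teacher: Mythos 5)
Your proposal is correct and follows essentially the same route as the paper: the linear invariants via the divergence theorem under periodic boundary conditions, the $\Hfield$ identity trivially from $\partial_t \Hfield \equiv \bzero$, and the pointwise properties by deferring to the invariant-domain/minimum-entropy-principle theory of the Euler system (the paper points to \cite{Tadmor1986, Guer2014}, while you specialize its own viscous-regularization lemmas to the pure Euler limit, which is the same content). The only caveat is one of scope: the proposition concerns the exact evolution of Operator \#1, so your added discussion of discrete conservation form and invariant-domain-preserving solvers duplicates assumptions that the paper places in Section \ref{sec:hypminimal} rather than in this proof.
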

Note that $\int_{\Omega} \Hfield^{n+1} \dx =
  \int_{\Omega} \Hfield^n \dx $ follows trivially from the fact that $\partial_t
  \Hfield \equiv \bzero$ for the case of Operator \#1.
Properties \eqref{EulerConservation} are a consequence of the divergence
theorem. On the other hand, establishing properties \eqref{EulerPointwise} is
rather technical, the reader is referred to \cite{Tadmor1986, Guer2014}.
We note in passing that positivity of the internal energy is not
a direct property, but rather a consequence of the positivity of density and
minimum principle of the specific entropy.

\begin{corollary}[Energy-stability of Operator \#1: Linear + Quadratic
  functional]\label{CorollaryEnergyOp1} Assume periodic boundary conditions,
  then the evolution described
  by Operator \#1 satisfies the following energy-estimate
  \begin{align*}
    \int_{\Omega}\totme^{n+1} + \tfrac{\mu}{2} |\Hfield^{n+1}|^2 \dx
    =
    \int_{\Omega}\totme^{n} + \tfrac{\mu}{2} |\Hfield^{n}|^2 \dx
  \end{align*}
\end{corollary}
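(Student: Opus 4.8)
The plan is to prove the corollary by integrating the equations of Operator \#1 and exploiting the fact that the magnetic field is frozen during this substep. The key observation is that Operator \#1 consists precisely of the compressible Euler system (the first three equations) together with the trivial evolution $\partial_t \Hfield = \bzero$. Because the magnetic field does not change under Operator \#1, the quadratic part of the energy functional is automatically constant, and the only real work is to show that the linear part $\int_\Omega \totme\,\dx$ is conserved.

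First I would handle the quadratic term: since $\partial_t \Hfield \equiv \bzero$ in \eqref{Operator1}, we have $\Hfield^{n+1}(\xcoord) = \Hfield^n(\xcoord)$ pointwise, and hence trivially
\begin{align*}
  \int_{\Omega} \tfrac{\mu}{2} |\Hfield^{n+1}|^2 \dx
  = \int_{\Omega} \tfrac{\mu}{2} |\Hfield^{n}|^2 \dx.
\end{align*}
Next I would handle the linear term by appealing to the conservation of total mechanical energy already established in the preceding Proposition. Indeed, the statement $\int_{\Omega} \totme^{n+1} \dx = \int_{\Omega} \totme^n \dx$ is exactly one of the linear invariants \eqref{EulerConservation}, which follows by integrating the third equation of \eqref{Operator1} over $\Omega$ and applying the divergence theorem: the flux term $\diver{}\big(\tfrac{\mom}{\dens}(\totme + p)\big)$ integrates to a boundary integral that vanishes under periodic boundary conditions. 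Adding the two identities gives the claimed energy balance.

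The main (and essentially only) obstacle here is conceptual rather than technical: one must recognize that the energy-stability of Operator \#1 is a strictly weaker statement than the full energy balance \eqref{EnergyBal} of the coupled system, and that it follows almost immediately from results already in hand. The proof does not require the triple-product identities or the curl integration-by-parts formula used in the Total energy-balance Proposition, precisely because the Lorentz force and the induction coupling are entirely absent from Operator \#1. The quadratic contribution is conserved for the trivial reason that $\Hfield$ is held fixed, while the linear contribution is conserved because the decoupled Euler subsystem is in conservation form. Thus I would present the corollary as a direct consequence of \eqref{EulerConservation} together with the frozen-field property, with the additive split of the functional into its linear and quadratic parts doing all the bookkeeping.
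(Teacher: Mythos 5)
Your proposal is correct and is essentially identical to the paper's own proof: both appeal to the conservation of $\int_{\Omega}\totme\,\dx$ from \eqref{EulerConservation}, use the frozen-field property $\Hfield^{n+1}\equiv\Hfield^{n}$ (a consequence of $\partial_t\Hfield\equiv\bzero$ in Operator \#1), and add the two identities. No gaps; your extra remarks about why the triple-product machinery of the Total energy-balance Proposition is unnecessary here are accurate but not needed.
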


\begin{proof} This follows from the conservation property $\int_{\Omega}
  \totme^{n+1} \dx = \int_{\Omega} \totme^n \dx$, then we add $\int_{\Omega}
  \tfrac{\mu}{2 }|\Hfield^n|^2 \dx $ to both sides of the equality, and use the
  fact that $\Hfield^{n+1} \equiv \Hfield^n$ since $\partial_t \Hfield \equiv
  \bzero$.
\end{proof}

Regarding Operator \#2, we start by noting that since $\partial_t \rho
\equiv 0$, we can rewrite \eqref{Operator2} as
\begin{gather}
  \label{Operator2rev}
  \text{Operator \#2 }
  \left\{
    \begin{split}
      \partial_t\rho &= 0 \\
      \rho \partial_t\vel &=
      - \mu  \Hfield \times  \curl{}\Hfield  \, , \\
      \partial_t \totme &=
      - \mu  (\Hfield \times  \curl{}\Hfield) \cdot \vel \, , \\
      \partial_t \Hfield &= \curl{} (\vel \times \Hfield) \, ,
    \end{split}\right.
\end{gather}
and note that only the evolution of $\vel$ and $\Hfield$ are actually
coupled. Assume periodic boundary conditions, multiply the evolution
equation for $\vel$ with a smooth test function $\veltest$ and the evolution
equation for $\Hfield$ with a vector-valued smooth test function $\Htest$
integrate by parts, then we get:
\begin{align}
  \label{SourceSmoothTest}
  \begin{split}
    (\rho \partial_t\vel,\veltest) &= - \mu
    (\Hfield \times \curl{}\Hfield,\veltest) \, , \\
    (\partial_t \Hfield,\Htest) &= (\Hfield \times \curl{}\Htest, \vel) \, ,
  \end{split}
\end{align}
We will discretize \eqref{SourceSmoothTest} in space and time, see Section
\ref{sec:SourceUpdate}. It is clear that, in order to make sense of the
integration by parts used to derive \eqref{SourceSmoothTest} the weak or
distributional $\curl{}$ of $\Hfield$ should be well defined. Therefore, it is
natural to consider a curl-conforming space discretization for $\Hfield$. Note
that tangent boundary conditions $\Hfield \times \normal \equiv \bzero$, which
can be directly enforced in the curl-conforming framework, are useful to achieve
energy-isolation of the MHD system, see Remark \ref{BundaryIso}.

% We will make no attempt specify the functional framework required to guarantee
% the existence and uniqueness of solutions of \eqref{SourceSmoothTest} in the
% infinite dimensional setting since this is a largely open issue.

\begin{proposition}[Evolution of Operator \#2: global energy stability and
  pointwise invariants]\label{PropStwoMore} Let $\state^n = [\rho^n,
  \mom^n, \totme^n, \Hfield^n]^\transp$ be the initial data. Assume periodic
  boundary conditions, then the evolution of
  described by Operator \#2 as defined in \eqref{Operator2rev}, preserves the
  following global quadratic-invariant:
  \begin{align}\label{GlobalBalSource}
    \int_{\Omega} \tfrac{1}{2} \dens^{n+1} |\vel^{n+1}|^2
    + \tfrac{\mu}{2} |\Hfield^{n+1}|^2 \dx =
    \int_{\Omega} \tfrac{1}{2} \dens^n|\vel^{n}|^2
    + \tfrac{\mu}{2} |\Hfield^{n}|^2 \dx \, ,
  \end{align}
  a well as pointwise invariance of the internal energy
  \begin{align}
    \label{InvarInte}
    (\totme^{n+1} - \tfrac{1}{2} \dens^{n+1} |\vel^{n+1}|^2)(\xcoord)
    =
    (\totme^{n} - \tfrac{1}{2} \dens^{n} |\vel^{n}|^2)(\xcoord)
  \end{align}
  for all $\xcoord \in \Omega$, with $\dens^{n+1}(\xcoord) = \dens^{n}(\xcoord)$
  since density does not evolve for the case of Operator \#2.
\end{proposition}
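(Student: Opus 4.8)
The plan is to treat the two claims separately, since they have genuinely different characters: the pointwise internal-energy invariance \eqref{InvarInte} is a purely local algebraic identity, while the global quadratic balance \eqref{GlobalBalSource} is an energy estimate that exploits the antisymmetric (Hamiltonian) coupling between $\vel$ and $\Hfield$ built into \eqref{SourceSmoothTest}.

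For the pointwise statement, I would observe that the $[\dens,\mom,\totme]$ block of Operator \#2 is exactly of the pure source form $\partial_t \state = \bv{s}(\force)$ with $\force = -\mu\,\Hfield \times \curl{}\Hfield$, since no flux divergence appears in \eqref{Operator2rev}. The internal energy $\inte(\state) = \totme - \tfrac{1}{2\dens}|\mom|^2 = \totme - \tfrac{1}{2}\dens|\vel|^2$ satisfies the functional dependence required by Lemma \ref{lem:InteInvar}, so that $\partial_t \inte(\state) = \nabla_{\state}\inte(\state)\cdot\bv{s}(\force) \equiv 0$ at every point. Integrating in time from $t_n$ to $t_{n+1}$ and using $\dens^{n+1}(\xcoord) = \dens^n(\xcoord)$, as density is frozen, yields \eqref{InvarInte} directly.

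For the global quadratic invariant, I would work from the weak formulation \eqref{SourceSmoothTest} and test each equation with the unknown itself: take $\veltest = \vel$ in the momentum equation and $\Htest = \Hfield$ in the induction equation. Because $\partial_t\dens \equiv 0$, the first gives $\tfrac{d}{dt}\int_{\Omega} \tfrac{1}{2}\dens|\vel|^2\,\dx = (\dens\,\partial_t\vel,\vel) = -\mu\,(\Hfield\times\curl{}\Hfield,\vel)$, while the second gives $\mu\,\tfrac{d}{dt}\int_{\Omega} \tfrac{1}{2}|\Hfield|^2\,\dx = \mu\,(\partial_t\Hfield,\Hfield) = \mu\,(\Hfield\times\curl{}\Hfield,\vel)$. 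Adding these, the right-hand sides cancel identically, so $\tfrac{d}{dt}\int_{\Omega} \big[\tfrac{1}{2}\dens|\vel|^2 + \tfrac{\mu}{2}|\Hfield|^2\big]\,\dx = 0$, and integrating from $t_n$ to $t_{n+1}$ produces \eqref{GlobalBalSource}.

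The main obstacle is the bookkeeping behind the exact cancellation in this last step: it hinges on the coupling in \eqref{SourceSmoothTest} being skew-symmetric, which is precisely what the integration-by-parts identity $\int_{\Omega} \curl{}\boldsymbol{u}\cdot\vel\,\dx = \int_{\partial\Omega}(\boldsymbol{u}\times\vel)\cdot\normal\,\ds + \int_{\Omega}\boldsymbol{u}\cdot\curl{}\vel\,\dx$ encodes once the boundary integral is discarded. I would therefore have to confirm that the boundary term vanishes, which it does under periodic boundary conditions (or equivalently under $\Hfield\times\normal\equiv\bzero$), and that the scalar triple product is handled consistently so that $(\vel\times\Hfield)\cdot\curl{}\Htest = \vel\cdot(\Hfield\times\curl{}\Htest)$. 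Everything else reduces to routine differentiation under the integral sign.
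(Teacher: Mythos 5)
Your proposal is correct and follows essentially the same route as the paper's proof: the global balance \eqref{GlobalBalSource} is obtained by testing \eqref{SourceSmoothTest} with $\veltest = \vel$ and $\Htest = \mu\Hfield$ (you take $\Htest = \Hfield$ and scale by $\mu$ afterwards, which is identical) and adding the two lines so the coupling terms cancel, while the pointwise invariance \eqref{InvarInte} follows from \eqref{resultinv} of Lemma \ref{lem:InteInvar} together with Remark \ref{Colloq}. The only cosmetic difference is that you re-justify the vanishing of the boundary terms behind the skew-symmetric structure of \eqref{SourceSmoothTest}, which the paper already handles in deriving \eqref{SourceSmoothTest} under the periodic boundary-condition assumption.
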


\begin{proof} Energy stability \eqref{GlobalBalSource} follows by taking
  $\veltest = \vel$ and $\Htest = \mu \Hfield$ in \eqref{SourceSmoothTest} and
  adding both lines. On the other hand, the invariance of the internal energy
  \eqref{InvarInte} is a direct consequence \eqref{resultinv} and Remark
  \ref{Colloq}.
\end{proof}

\begin{corollary}[Energy-stability of Operator \#2: Linear + Quadratic
  functional]\label{cor:totalEnergy} Under the assumptions of
  Proposition \ref{PropStwoMore}, the evolution of Operator \#2 satisfies the
  following energy-balance
  \begin{align}
    \label{TotEstab}
    \int_{\Omega}\totme^{n+1} + \tfrac{\mu}{2} |\Hfield^{n+1}|^2 \dx
    =
    \int_{\Omega}\totme^{n} + \tfrac{\mu}{2} |\Hfield^{n}|^2 \dx
  \end{align}
\end{corollary}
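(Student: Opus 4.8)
The plan is to assemble the claimed total-energy balance \eqref{TotEstab} from the two results already established in Proposition \ref{PropStwoMore}, by decomposing the total mechanical energy into its kinetic and internal parts. I would write $\totme = \tfrac{1}{2}\dens|\vel|^2 + \inte(\state)$, where $\inte(\state) = \totme - \tfrac{1}{2}\dens|\vel|^2$ is the internal energy per unit volume (equivalently $\totme - \tfrac{|\mom|^2}{2\dens}$, as in Lemma \ref{lem:InteInvar}). Under this decomposition the target functional $\int_{\Omega}\totme + \tfrac{\mu}{2}|\Hfield|^2\dx$ separates into an internal-energy piece $\int_{\Omega}\inte(\state)\dx$ and a quadratic piece $\int_{\Omega}\tfrac{1}{2}\dens|\vel|^2 + \tfrac{\mu}{2}|\Hfield|^2\dx$, and each of these is controlled by exactly one of the two statements of the proposition.

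First I would integrate the pointwise invariance \eqref{InvarInte} over $\Omega$. Since that identity holds at every $\xcoord \in \Omega$, integration immediately yields conservation of the total internal energy, i.e.\ $\int_{\Omega}\inte(\state^{n+1})\dx = \int_{\Omega}\inte(\state^{n})\dx$. Next I would invoke the global quadratic invariant \eqref{GlobalBalSource}, which is precisely the balance of the remaining piece, $\int_{\Omega}\tfrac{1}{2}\dens^{n+1}|\vel^{n+1}|^2 + \tfrac{\mu}{2}|\Hfield^{n+1}|^2\dx = \int_{\Omega}\tfrac{1}{2}\dens^{n}|\vel^{n}|^2 + \tfrac{\mu}{2}|\Hfield^{n}|^2\dx$. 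Adding these two integral identities, the kinetic term $\tfrac{1}{2}\dens|\vel|^2$ recombines with $\inte(\state)$ on each side to reconstitute $\totme$, and the sum collapses to $\int_{\Omega}\totme^{n+1} + \tfrac{\mu}{2}|\Hfield^{n+1}|^2\dx = \int_{\Omega}\totme^{n} + \tfrac{\mu}{2}|\Hfield^{n}|^2\dx$, which is the asserted balance.

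I do not expect any genuine obstacle here; the corollary is pure bookkeeping layered on top of Proposition \ref{PropStwoMore}. The only point requiring care is routing each constituent of the total energy to the correct source result — the internal energy to the pointwise invariance \eqref{InvarInte} (which itself descends from the force-invariance of entropy-like functionals in Lemma \ref{lem:InteInvar}), and the kinetic-plus-magnetic energy to the Hamiltonian quadratic invariant \eqref{GlobalBalSource} — and then verifying that the kinetic contributions cancel exactly upon recombination. This also makes transparent why the conserved total energy is naturally the sum of a \emph{linear} functional, $\int_{\Omega}\totme\dx$, and a \emph{quadratic} one, $\int_{\Omega}\tfrac{\mu}{2}|\Hfield|^2\dx$, as emphasized in the statement of the corollary.
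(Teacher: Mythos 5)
Your proposal is correct and is essentially the paper's own proof: integrate the pointwise internal-energy invariance \eqref{InvarInte} over $\Omega$, add the global quadratic invariant \eqref{GlobalBalSource}, and observe that the kinetic-energy terms cancel (equivalently, recombine into $\totme$) to yield \eqref{TotEstab}. No gaps; the bookkeeping is exactly as in the paper.
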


\begin{proof} Integrating \eqref{InvarInte} with respect to space we get
  \begin{align}
    \label{GlobalInte}
    \int_{\Omega} \totme^{n+1} - \tfrac{1}{2} \dens^{n+1} |\vel^{n+1}|^2 \dx
    =
    \int_{\Omega} \totme^{n} - \tfrac{1}{2} \dens^{n} |\vel^{n}|^2 \dx .
  \end{align}
  Identity \eqref{TotEstab} follows by adding \eqref{GlobalBalSource} and
  \eqref{GlobalInte} leading to the cancellation of kinetic energy terms.
\end{proof}

\begin{remark}[Invariant domain preservation for the evolution described by
  Operator \#2] We note that the evolution described Operator \#2 is such that
  neither density nor internal energy evolve, then: specific entropy and
  mathematical entropy remain invariant. Or equivalently in term of formulas
  \begin{align}\label{EvoConst}
    \tfrac{\partial \dens}{\partial t}  \equiv 0
    \ \ \text{and} \ \
    \tfrac{\partial}{\partial t} \big( \totme - \tfrac{1}{2 \dens}
|\mom|^2\big)
    \equiv 0
    \ \ \text{imply that} \ \
    \tfrac{\partial s}{\partial t}  \equiv 0
    \ \ \text{and} \ \
    \tfrac{\partial \eta}{\partial t}  \equiv 0.
  \end{align}
  here $\eta(\state) = -\rho s(\state)$ is the mathematical entropy. In
  conclusion: the evolution of Operator \#2 cannot meaningfully affect the
  evolution of density, internal energy, or specific entropy. Therefore,
  Operator \#2 cannot affect the preservation of invariant set properties.
  Since the mathematical entropy remains constant during the evolution
  described by Operator \#2 we also have the global estimate:
  \begin{align*}
    \int_{\Omega} \eta(\state(\xcoord, t_{n+1})) \dx
    =
    \int_{\Omega} \eta(\state(\xcoord,t_n)) \dx
    \, .
  \end{align*}
\end{remark}

\begin{remark}[Discrete-time evolution of total mechanical energy for
Operator \#2] From \eqref{Operator2rev} we note that the evolution of velocity
$\vel$ and magnetic field $\Hfield$ are independent from the evolution of the
total mechanical energy $\totme$. Therefore, in the time-discrete setting, given
an initial data $[\rho^{n},\vel^{n}, \totme^n, \Hfield^n](\xcoord)$ at time
$t_n$, we can compute $\vel^{n+1}$ and $\Hfield^{n+1}$ by integrating
\eqref{SourceSmoothTest} in time neglecting the evolution law for the total
mechanical energy $\partial_t \totme = - \mu  (\Hfield \times
\curl{}\Hfield) \cdot \tfrac{\mom}{\dens}$. Once $\vel^{n+1}$ and
$\Hfield^{n+1}$ are available, the constraint \eqref{InvarInte} identifies a
unique function $\totme^{n+1}(\xcoord)$. More precisely, we may rewrite
\eqref{InvarInte} as
\begin{align}\label{TotmeEvoRule}
  \totme^{n+1}(\xcoord) := \big( \totme^{n} - \tfrac{1}{2} \dens^n
  |\vel^{n}|^2 + \tfrac{1}{2} \dens^{n+1}|\vel^{n+1}|^2 \big)(\xcoord),
\end{align}
and use it in order to compute $\totme^{n+1}(\xcoord)$ using the data
$\dens^n$, $\dens^{n+1} = \dens^{n}$, $\totme^{n}$, $\vel^{n}$ and
$\vel^{n+1}$. This means that there is no particular use for $\partial_t \totme
= - \mu (\Hfield \times \curl{}\Hfield) \cdot \tfrac{\mom}{\dens}$. Therefore,
we use \eqref{TotmeEvoRule} in order to update total mechanical energy in the
time-discrete setting. Note that, by construction, \eqref{TotmeEvoRule}
guarantees exact preservation of the internal energy, specific internal energy,
specific entropy and mathematical entropy as well (provided that $\rho^{n+1}
\equiv \rho^n$).
% and the evolution law for the
% total mechanical energy $\partial_t \totme = - \mu  (\Hfield \times
% \curl{}\Hfield)
% \cdot \tfrac{\mom}{\dens}$ (presented in \eqref{Operator2rev}) .
% \Question{MN: Please explain it to me during meeting.}
% We have to choose either and ignore the other one.
%
% In this paper we pick
% $\tfrac{\partial}{\partial t} \big( \totme - \tfrac{1}{2 \dens}
% |\mom|^2\big)$,
\end{remark}

\begin{remark}[Induction equation as an independent object of
study]\label{InductionAdvective} We note that the induction equation, either
\eqref{consBfield} or \eqref{consBfieldDiv}, is a very interesting object in
its own right. Broadly speaking, the devise of schemes for advective PDEs
endowed with involution-like constraints is a challenging task that has
received significant attention in the last years \cite{Torr2003, Torr2004,
Besse2005, Mishra2010, Mishra2012, Koley2012, Heumann2012, Heumann2013,
Heumann2016, Sarkar2019}. However, unless we make very strong assumptions
about
the velocity field, the induction equation does not satisfy
any global stability property (\eg $L^2$-stability). Similarly, to the best of
the
authors' knowledge the induction does not satisfy any pointwise stability
property, such as max/min principles or invariant set properties. Since the
induction equation does not have natural notions of global or pointwise
stability, numerical stability not a well-defined concept. On the other
hand,
system \eqref{Operator2rev} satisfies the global stability property
\eqref{GlobalBalSource}, and the pointwise stability property
\eqref{InvarInte},
outlining quite clearly the properties numerical methods should preserve.
\end{remark}

%%%%%%%%%%%%%%%%%%%%%%%%%%%%%%%%%%%%%%%%%%%%%%%%%%%%%%%%%%%%%%%%%%%%%%%%%%%%%%%%

%%%%%%%%%%%%%%%%%%%%%%%%%%%%%%%%%%%%%%%%%%%%%%%%%%%%%%%%%%%%%%%%%%%%%%%%%%%%%%%%

%%%%%%%%%%%%%%%%%%%%%%%%%%%%%%%%%%%%%%%%%%%%%%%%%%%%%%%%%%%%%%%%%%%%%%%%%%%%%%%%

%%%%%%%%%%%%%%%%%%%%%%%%%%%%%%%%%%%%%%%%%%%%%%%%%%%%%%%%%%%%%%%%%%%%%%%%%%%%%%%%

%%%%%%%%%%%%%%%%%%%%%%%%%%%%%%%%%%%%%%%%%%%%%%%%%%%%%%%%%%%%%%%%%%%%%%%%%%%%%%%%

\section{Space and time discretization of the MHD system}\label{sec:discrete}

\subsection{Space discretization}\label{sec:SpaceDisc}

In this subsection we outline the space discretization used for Euler's
components $\{\rho, \mom, \totme\}$ and the magnetic field $\Hfield$. The ideas
advanced in this manuscript work in two space dimensions ($d=2$) as well as
three space dimensions ($d = 3$). Similarly, the scheme has no limitation on the
choice of polynomial degree and formal accuracy in space. However, for the sake
of concreteness, we focus on the case of $d = 2$ and spatial discretizations
capable of delivering second-order accuracy. In Remark \ref{FiniteElementsQuads}
we also provide the proper generalization for the case of
quadrilateral/tetrahedral meshes.

We consider a simplicial mesh $\triangulation$ and a corresponding
scalar-valued continuous finite element space $\FESpaceHypComp$ for each
component of Euler's system:
\begin{align}\label{VdefSpace}
\FESpaceHypComp &=
\big \{ v_h(\xcoord) \in \mathcal{C}^{0}(\Omega) \;\big|\;
v_h (\locglobmap_\element(\widehat{\xcoord})) \in
\mathbb{P}^1(\widehat{\element}) \;\forall \element \in \triangulation \big\}.
\end{align}
Here, $\locglobmap_\element(\widehat{\xcoord}):\widehat{\element}\to\element$
denotes a diffeomorphism mapping from the unit simplex $\widehat{\element}$ to
the physical element $\element \in \triangulation$,
and $\mathbb{P}^1(\widehat{K})$ is polynomial space of at most first degree on
the reference element. We
define $\HypVertices=\big\{1:\text{dim}(\FESpaceHypComp)\big\}$ as the
index-set of global, scalar-valued degrees of freedom corresponding to
$\FESpaceHypComp$. Similarly, we introduce the set of global shape functions
$\{\phi_{i}(\xcoord)\}_{i \in \HypVertices}$ and the set of collocation
points $\{\xcoord_{i}\}_{i \in \HypVertices}$ satisfying the property
$\phi_{i}(\xcoord_j) = \delta_{ij}$ for all $i,j \in \HypVertices$. We
assume that the partition of unity property $\sum_{i \in \HypVertices}
\phi_{i}(\xcoord) = 1$ for all $\xcoord \in \Omega$ holds true. We introduce a
number of matrices that will be used for the algebraic discretization. We
define the consistent mass matrix entries $m_{ij} \in \mathbb{R}$, lumped mass
matrix $m_i \in \mathbb{R}$, and the discrete divergence-matrix entries
$\bv{c}_{ij} \in \mathbb{R}^d$:
\begin{align}
  \label{schemeMatrices}
  m_{ij} = \int_{\Omega} \HypBasisComp_i \HypBasisComp_j\dx \ , \ \
  m_i = \int_{\Omega} \HypBasisComp_i \dx \ , \ \
  \bv{c}_{ij} = \int_{\Omega} \nabla\phi_j \phi_i \dx \, .
\end{align}
Note that the definition of $m_{ij}$ and the partition of unity property
$\sum_{i \in \HypVertices} \phi_{i}(\xcoord) = 1$ imply that $\sum_{j \in
  \HypVertices} m_{ij} = m_i$. Given two scalar-valued finite element
functions $u_h = \sum_{i \in \HypVertices} u_i \HypBasisComp_i \in
\FESpaceHypComp$ and $v_h = \sum_{i \in \HypVertices} v_i
\HypBasisComp_i\in \FESpaceHypComp$ we define the lumped inner product as
\begin{align}\label{LumpedInner}
  \langle u_h, v_h \rangle = \sum_{i \in \HypVertices} m_i u_i v_i \, .
\end{align}
For the case of vector valued functions $\boldsymbol{u}_h = \sum_{i \in
  \HypVertices} \boldsymbol{u}_i \HypBasisComp_i \in
[\FESpaceHypComp]^2$ and $\boldsymbol{v}_h = \sum_{i \in \HypVertices}
\boldsymbol{v}_i \HypBasisComp_i\in [\FESpaceHypComp]^2$ the lumped
inner-product is defined as $\langle \boldsymbol{u}_h, \boldsymbol{v}_h \rangle
= \sum_{i \in \HypVertices} m_i \boldsymbol{u}_i \cdot \boldsymbol{v}_i$.

We define the finite dimensional space
\begin{align}\label{BDMspace}
\FESpaceH = \big\{ \Htest_h \in H(\text{curl}, \Omega) \, \big | \,
[\nabla_{\widehat{\xcoord}}\locglobmap_{\element}(\widehat\xcoord)]^{\transp}
\Htest_h(\locglobmap_{\element}(\widehat{\xcoord})) \in
[\mathbb{P}^1(\widehat{\element})]^2
\  \forall \element \in \triangulation \big\}
\end{align}
which will be used to discretize the magnetic field $\Hfield$. The finite
element space $\FESpaceH$ is known as the ``rotated'' or curl-conforming
$\text{BDM}_1$ space. The primary motivation to use this space is that it the
simplest curl-conforming finite element that spans all the vector-valued
polynomial space $[\mathbb{P}_1]^2$ , therefore full second-order accuracy
should be expected in the $L^p$-norms when using this element.

Finally, we define the space
\begin{align}\label{PotSpace}
  \FESpacePot = \big\{ \EpotTest_h \in \mathcal{C}^0(\Omega) \, \big | \,
  \EpotTest_h(\locglobmap_{\element}(\widehat{\xcoord})) \in
  \mathbb{P}_2(\widehat{\element})
  \  \forall \element \in \triangulation \big\} \, .
\end{align}
It is easy to prove that the space $\FESpacePot$ satisfies the inclusion
$\nabla\FESpacePot \subset \FESpaceH$, more precisely, these two
spaces are part of a discrete exact sequence, see \cite{Arnold2014}. The space
$\FESpacePot$ is used to define the weak divergence-free property, see
Proposition \ref{WeakDivProp}.

\begin{remark}[Quadrilateral and tetrahedral meshes]\label{FiniteElementsQuads}
In the context of tensor product elements, we have to use different definitions
for the set of spaces $\FESpaceHypComp$, $\FESpaceH$ and $\FESpacePot$ defined
in \eqref{VdefSpace}, \eqref{BDMspace} and \eqref{PotSpace} respectively.
The simplest space we can use in order to discretize the components of
Euler's system is:
\begin{align}
\label{QuadMag}
\FESpaceHypComp &= \big \{ v_h(\xcoord) \in
\mathcal{C}^{0}(\Omega) \;\big|\;
v_h (\locglobmap_\element(\widehat{\xcoord})) \in
\mathbb{Q}^k(\widehat{\element}) \;\forall \element \in
\triangulation
\big\}.
\end{align}
for $k \geq 1$. On the other hand, the natural candiates for the spaces
$\FESpaceH$ and $\FESpacePot$ are
\begin{align}
\label{QuadHspace}
\FESpaceH &= \big\{ \Htest_h \in H(\text{curl}, \Omega) \, \big | \,
[\nabla_{\widehat{\xcoord}}\locglobmap_{\element}(\widehat\xcoord)]^{\transp}
\Htest_h(\locglobmap_{\element}(\widehat{\xcoord})) \in
\mathcal{N}_k(\widehat{\element})
\  \forall \element \in \triangulation \big\} \\
\label{QuadPotspace}
\FESpacePot &= \big\{ \EpotTest_h \in \mathcal{C}^0(\Omega) \, \big | \,
\EpotTest_h(\locglobmap_{\element}(\widehat{\xcoord})) \in
\mathbb{Q}^{k+1}(\widehat{\element})
\  \forall \element \in \triangulation \big\} \, .
\end{align}
with $\mathcal{N}_k(\widehat{\element}) =
[\mathbb{P}_{k,k+1,k+1}(\widehat{\element}),
\mathbb{P}_{k+1,k,k+1}(\widehat{\element}),
\mathbb{P}_{k+1,k+1,k}(\widehat{\element})]$, where
$\mathbb{P}_{p,q,r}$ denotes the space of scalar-valued polynomials of at most
$p$-th degree in the $x$-variable, $q$-th degree in the $y$-variable and
$r$-th degree in the $z$-variable. The vector-valued polynomial family
$\mathcal{N}_k(\widehat{\element})$ is the celebrated Nedelec space of the
first kind, see \cite{Brezzi2013}. The choice of spaces described in
\eqref{QuadMag}-\eqref{QuadPotspace} can be generalized straightforwardly to
arbitrary polynomial degree for both the two and three space dimensions. An
alternative to the choices described in
\eqref{QuadHspace}-\eqref{QuadPotspace}, for the specific case of
two-space dimensions and a target of second-order accuracy, is using the
$\text{BDM}_1$ space on quadrilaterals for $\FESpaceH$, also denoted as
$\mathcal{S}_1 \Lambda^1$ in the context of finite element exterior calculus,
and the serendipity element $\mathcal{S}_1 \Lambda^0$ for $\FESpacePot$,
see \cite{Arnold2014}. However, the implementation of elements from the
$\mathcal{S}_k \Lambda^r$ family, and their generalization to higher-order
polynomial degrees is slightly more technical.
\end{remark}

%%%%%%%%%%%%%%%%%%%%%%%%%%%%%%%%%%%%%%%%%%%%%%%%%%%%%%%%%%%%%%%%%%%%%%%%%%%%%%%%
%%%%%%%%%%%%%%%%%%%%%%%%%%%%%%%%%%%%%%%%%%%%%%%%%%%%%%%%%%%%%%%%%%%%%%%%%%%%%%%%
%%%%%%%%%%%%%%%%%%%%%%%%%%%%%%%%%%%%%%%%%%%%%%%%%%%%%%%%%%%%%%%%%%%%%%%%%%%%%%%%

\subsection{Discretization of the Operator \#1: minimal
  assumptions}\label{sec:hypminimal}

The central ideas advanced in this paper are compatible with most of the
existing numerical methods used to solve Euler's equation of gas dynamics.
In this subsection we limit ourselves to outline the minimal assumptions
made about the numerical scheme used to approximate the solution of Operator
\#1. We assume that, given some initial data $\state_h = [
\rho_h^n, \mom_h^n, \totme_h^n]^\transp$, a numerical approximation to the
solutions of $\state(\xcoord, t) = [\rho(\xcoord, t), \mom(\xcoord, t),
\totme(\xcoord, t)]^\transp$ at time $t_n$, we have at hand a numerical
procedure to compute the updated state as
\begin{align}\label{EulerAbstractScheme}
\{\rho_h^{n+1}, \mom_h^{n+1}, \totme_h^{n+1}, \dt_n\}
&:=
\texttt{euler\_system\_update}(\{\rho_h^n, \mom_h^{n}, \totme_h^n\}),
\end{align}
where $\{\rho_h^{n+1}, \mom_h^{n+1}, \totme_h^{n+1}\}$ is the approximate
solution at time $t_n + \dt_n$. Note that as described
in \eqref{EulerAbstractScheme}, $\dt_n$ is a return argument of the procedure
$\texttt{euler\_system\_update}$. In other words,
$\texttt{euler\_system\_update}$ determines the time-step size on its own. We
may at times, need to prescribe the time-step size used by
$\texttt{euler\_system\_update}$, in such case the interface of the method
might look as:
\begin{align*}
  \{\rho_h^{n+1}, \mom_h^{n+1}, \totme_h^{n+1}\}
  &:=
\texttt{euler\_system\_update}(\{\rho_h^n, \mom_h^{n}, \totme_h^n, \dt_n\}),
\end{align*}
where $\dt_n$ is supplied to $\texttt{euler\_system\_update}$. The
internals of $\texttt{euler\_system\_update}$ are not of much relevance.
However, we may assume that $\texttt{euler\_system\_update}$ is formally
second-order accurate, and most importantly, the following structural
properties:
\begin{itemize}
\item[\itemizebullet] \textit{Collocated discretization.} We assume that all the
  components of Euler's system \eqref{Operator1} are discretized in a collocated
  fashion meaning
  \begin{align*}
    \rho_h(\xcoord) = \sum_{j \in \HypVertices} \rho_i
    \phi_(\xcoord) \, , \
    \mom_h(\xcoord) = \sum_{i \in \HypVertices} \mom_i
    \phi_i(\xcoord) \, , \
    \totme_h(\xcoord) = \sum_{i \in \HypVertices} \totme_i
    \phi_i(\xcoord) \, , \
  \end{align*}
  where $\rho_i \in \mathbb{R}$, $\mom_i \in \mathbb{R}^2$, $\totme_i \in
  \mathbb{R}$, and $\{\phi_i(\xcoord)\}_{i \in \HypVertices}$ is the basis
  of the scalar-valued finite element space $\FESpaceHypComp$ defined in
  \eqref{VdefSpace}.

\item[\itemizebullet] \textit{Conservation of linear invariants.} In the context
  of periodic boundary conditions the hyperbolic solver preserves the linear
  invariants:
  \begin{align}\label{consMasss}
    \sum_{i \in \HypVertices} m_i \rho_i^{n+1} = \sum_{i \in \HypVertices}
    m_i \rho_i^{n}
    \ , \ \
    \sum_{i \in \HypVertices} m_i \mom_i^{n+1} = \sum_{i \in \HypVertices}
    m_i \mom_i^{n}
    \ , \ \
    \sum_{i \in \HypVertices} m_i \totme_i^{n+1} = \sum_{i \in \HypVertices}
    m_i
    \totme_i^{n},
  \end{align}
  where $m_i$ was defined in \eqref{schemeMatrices}.

\item[\itemizebullet] \textit{Admissibility.} Assume that the initial data
  $\state_i^n = [\rho_i^n, \mom_i^n, \totme_i^n]^\transp$ is admissible, meaning
  $\state_i^n \in \mathcal{A}$ for all $i \in \HypVertices$, where the set
  $\mathcal{A}$ was defined in \eqref{DefAdmissibleA}. Then the updated
  state $\state_i^{n+1} = [\rho_i^{n+1}, \mom_i^{n+1}, \totme_i^{n+1}]^\transp$
  is admissible for all $i \in \HypVertices$ as well. We highlight that this is
  rather low requirements for preservation of pointwise properties. In general,
  positivity properties are not enough, and we might be interested in stronger
  properties, such as the preservation of the local minimum principle of the
  specific entropy, see \cite{Guer2018, Guer2019}.

\item[\itemizebullet] \textit{Entropy dissipation inequality. }We \emph{may}
  assume that the scheme preserves a global entropy inequality, meaning
  \begin{align}\label{EulerSolverDissipation}
    \sum_{i \in \HypVertices} m_i \eta(\state_i^{n+1}) \leq \sum_{i \in
    \HypVertices} m_i
    \eta(\state_i^{n}),
  \end{align}
  in the context of periodic boundary conditions.
\end{itemize}
For the sake of completeness, in \ref{sec:HypSolver} we make precise
the implementation details of the hyperbolic solver used in all our
computations. For any practical purpose, we may simply regard
$\texttt{euler\_system\_update}$ as the user's favorite choice of Euler scheme
(a black box) that is: consistent, conservative, it is mathematically
guaranteed not to crash, and may have some entropy-dissipation properties.

\begin{remark}[Partition of unity and consistent mass-matrix] We note that
  hyperbolic solvers using a consistent mass matrix do not satisfy
  conservation property \eqref{consMasss} directly, but rather the identities
  \begin{align}\label{ConsMassBalance}
    \sum_{j \in
    \HypVertices} m_{ij} \varrho_j^{n+1} = \sum_{j \in \HypVertices} m_{ij}
    \varrho_j^n,
  \end{align}
  where $\{\varrho_i^{n+1}\}_{i \in \HypVertices}$ represents a quantity of
  interest
  such as density, momentum or total mechanical energy, with $m_{ij}$ as
  defined in \eqref{schemeMatrices}. In this context, we consider the summation
  $\sum_{i \in \HypVertices}$ to both sides of identity
  \eqref{ConsMassBalance},
  using the partition of unity property $\sum_{i \in
    \HypVertices} m_{ij} = m_j$ (see Section \ref{sec:SpaceDisc}) we get
  \begin{align*}
    \sum_{i \in \HypVertices} \sum_{j \in \HypVertices}  m_{ij}
    \varrho_j^{n+1} =
    \sum_{i \in \HypVertices} \sum_{j \in \HypVertices}  m_{ij}
    \varrho_j^{n}
    \ \ \Rightarrow \ \
    \sum_{j \in \HypVertices}
    \underbrace{\big( \sum_{i \in \HypVertices} m_{ij} \big)}_{=
    \, m_j} \varrho_j^{n+1} =
    \sum_{j \in \HypVertices} \underbrace{\big(\sum_{i \in
    \HypVertices} m_{ij}\big)}_{=\,  m_j} \varrho_j^{n},
  \end{align*}
  recovering to the usual conservation identity.
\end{remark}

%%%%%%%%%%%%%%%%%%%%%%%%%%%%%%%%%%%%%%%%%%%%%%%%%%%%%%%%%%%%%%%%%%%%%%%%%%%%%%%%

%%%%%%%%%%%%%%%%%%%%%%%%%%%%%%%%%%%%%%%%%%%%%%%%%%%%%%%%%%%%%%%%%%%%%%%%%%%%%%%%

%%%%%%%%%%%%%%%%%%%%%%%%%%%%%%%%%%%%%%%%%%%%%%%%%%%%%%%%%%%%%%%%%%%%%%%%%%%%%%%%

%%%%%%%%%%%%%%%%%%%%%%%%%%%%%%%%%%%%%%%%%%%%%%%%%%%%%%%%%%%%%%%%%%%%%%%%%%%%%%%%

\subsection{Discretization of the Operator \#2}\label{sec:SourceUpdate}

This section concerns with the spatial discretization of Operator \#2, see
\eqref{Operator2}. We consider the following semi-discretation of
\eqref{SourceSmoothTest}: find $\vel_h \in [\FESpaceHypComp]^2$ and $\Hfield_h
\in \FESpaceH$ such that
\begin{align}\label{semiDiscSource}
\begin{split}
\langle \rho_h\partial_t\vel_h, \veltest_h \rangle &=
- \mu (\Hfield_h \times \curl{}\Hfield_h, \veltest_h ), \\
(\partial_t\Hfield_h, \Htest_h)
&=
(\Hfield_h \times \curl{}\Htest_h, \vel_h ) \, ,
\end{split}
\end{align}
for all $\veltest_h \in [\FESpaceHypComp]^2$ and $\Htest_h \in \FESpaceH$ more
precisely
\begin{align*}
  \vel_h = \sum_{i \in \HypVertices} \vel_i \phi_i
  \ \ \text{and} \ \
  \Hfield_h = \sum_{i \in \Hvertices} \HfieldDOF_i \HBasis_i,
\end{align*}
where $\{\phi_i\}_{i \in \HypVertices}$ is a basis of the
scalar-valued space $\FESpaceHypComp$ while $\{\HBasis_i\}_{i \in
  \Hvertices}$ is a vector-valued basis for the space $\FESpaceH$, see
Section \ref{sec:SpaceDisc} for the definition of the finite element spaces.
Note that the bilinear form containing the time-derivative of the velocity in
\eqref{semiDiscSource} is lumped, see \eqref{LumpedInner} for the definition of
lumping. This lumping is second-order accurate for the case of
first-order simplices (used in our computations), see Remark \ref{QuadsLumping}
for the case of quadrilateral elements.

\begin{remark}[Lumping and higher order elements]\label{QuadsLumping}
In all our computations we use simplices. However, if we were to use tensor
product elements (\eg quadrilaterals) mass lumping has consistency order $2 k -
3$ when using $\mathbb{Q}^k$ elements with Gauss-Lobatto interpolation
points. Therefore, mass-lumping preserves the formal consistency order of the
method and is compatible with arbitrarily high-order polynomial degree.
\end{remark}

\begin{lemma}[Conserved quantities]\label{ConseQuantRemark} The semi-discrete
  scheme \eqref{semiDiscSource}, as well as the fully discrete scheme using
  Crank-Nicolson method
  \begin{subequations}\label{SourceUpdateCN}
    \begin{align}
     \langle \rho_h^n (\vel_h^{n+1} - \vel_h^{n}), \veltest_h \rangle &=
    - \dt_n \mu \int_{\Omega} (\Hfield_h^{n+\frac{1}{2}} \times
     \curl{}\Hfield_h^{n+\frac{1}{2}})\cdot \veltest_h \dx \, , \\
     \label{SourceUpdateCNInduc}
     \int_{\Omega} (\Hfield_h^{n+1} - \Hfield_h^{n})\cdot \Htest_h
     &=
     \dt_n \int_{\Omega}  (\Hfield_h^{n+\frac{1}{2}} \times
     \curl{}\Htest_h)\cdot
     \vel_h^{n+\frac{1}{2}} \dx \, ,
    \end{align}
  \end{subequations}
  where $\vel_h^{n+\frac{1}{2}} := \frac{1}{2}(\vel_h^{n} + \vel_h^{n+1})$
  and $\Hfield_h^{n+\frac{1}{2}} := \frac{1}{2}(\Hfield_h^{n} +
\Hfield_h^{n+1})$,
  preserve the energy identity
  \begin{align}\label{EnergyDiscSource}
    \sum_{i \in \HypVertices}
    \tfrac{m_i}{2}\dens_i^{n+1} |\vel_i^{n+1}|^2
    + \tfrac{\mu}{2}  \|\Hfield_h^{n+1}\|_{L^2(\Omega)}^{2}
    =
    \sum_{i \in \HypVertices}
    \tfrac{m_i}{2}\dens_i^{n} |\vel_i^{n}|^2
    + \tfrac{\mu}{2}  \|\Hfield_h^{n}\|_{L^2(\Omega)}^{2}.
  \end{align}
  % satisfy
  % satisfies \eqref{EnergyDiscSource} as well.
\end{lemma}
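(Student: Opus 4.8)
The plan is to mirror the continuous energy argument used to establish the global balance \eqref{GlobalBalSource} in Proposition \ref{PropStwoMore}, testing each discrete equation against a copy of the unknown it evolves. Since both schemes are Galerkin with solution and test spaces coinciding ($\vel_h,\veltest_h \in [\FESpaceHypComp]^2$ and $\Hfield_h,\Htest_h \in \FESpaceH$), the discrete solution is itself an admissible test function, which is the only structural ingredient required. The resulting cancellation reflects the Hamiltonian/skew-adjoint coupling between the Lorentz-force term in the momentum balance and the induction term, already visible in the weak form \eqref{SourceSmoothTest}.

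For the semi-discrete scheme \eqref{semiDiscSource}, I would take $\veltest_h = \vel_h$ in the first line and $\Htest_h = \mu\Hfield_h$ in the second. Because density does not evolve under Operator \#2, the left-hand side of the first line becomes $\langle \rho_h \partial_t \vel_h, \vel_h\rangle = \tfrac{d}{dt}\sum_{i\in\HypVertices}\tfrac{m_i}{2}\dens_i|\vel_i|^2$, while the left-hand side of the second becomes $\mu(\partial_t\Hfield_h,\Hfield_h) = \tfrac{\mu}{2}\tfrac{d}{dt}\|\Hfield_h\|_{L^2(\Omega)}^2$. Adding the two lines, the right-hand sides are $-\mu(\Hfield_h\times\curl{}\Hfield_h,\vel_h)$ and $+\mu(\Hfield_h\times\curl{}\Hfield_h,\vel_h)$, which cancel exactly, so the time derivative of the total energy vanishes, giving the semi-discrete (time-continuous) version of \eqref{EnergyDiscSource}.

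For the fully discrete Crank--Nicolson scheme \eqref{SourceUpdateCN}, the corresponding choices are $\veltest_h = \vel_h^{n+\frac12}$ and $\Htest_h = \mu\Hfield_h^{n+\frac12}$. The elementary identity $(\bv{a}-\bv{b})\cdot\tfrac12(\bv{a}+\bv{b}) = \tfrac12(|\bv{a}|^2 - |\bv{b}|^2)$ converts the discrete time differences into energy increments: the velocity line yields $\sum_{i\in\HypVertices}\tfrac{m_i\dens_i^n}{2}\big(|\vel_i^{n+1}|^2 - |\vel_i^n|^2\big)$ and the induction line yields $\tfrac{\mu}{2}\big(\|\Hfield_h^{n+1}\|_{L^2(\Omega)}^2 - \|\Hfield_h^n\|_{L^2(\Omega)}^2\big)$. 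Crucially, both coupling terms now carry the \emph{same} midpoint argument $\Hfield_h^{n+\frac12}\times\curl{}\Hfield_h^{n+\frac12}$ paired with $\vel_h^{n+\frac12}$, so upon adding the two lines they cancel identically, exactly as in the semi-discrete case. Rearranging, and using $\dens_i^{n+1} = \dens_i^n$ since density is frozen under Operator \#2, delivers \eqref{EnergyDiscSource}.

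The only delicate point is that the cancellation of the coupling terms hinges on two matching choices: the magnetic test function must be scaled by exactly $\mu$ to compensate the $\mu$ in the momentum forcing, and for Crank--Nicolson the \emph{same} midpoint state $\Hfield_h^{n+\frac12}$ must appear in both the cross product and the curl of the induction term. This is precisely why the nonlinearity is evaluated as $\Hfield_h^{n+\frac12}\times\curl{}\Hfield_h^{n+\frac12}$ at the midpoint in both equations; any inconsistent time-averaging of the two factors would spoil the exact cancellation. I expect no further obstacle, since once the correct test functions are inserted the argument is purely algebraic — it is the $H(\curl)$ conformity together with the consistent (non-lumped) mass matrix for $\Hfield_h$ that makes $(\partial_t\Hfield_h,\Hfield_h)$ reproduce the exact time derivative of $\tfrac12\|\Hfield_h\|_{L^2(\Omega)}^2$, matching the magnetic energy appearing in the statement.
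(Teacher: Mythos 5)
Your proposal is correct and follows essentially the same route as the paper's proof: test with $\veltest_h = \vel_h^{n+\frac12}$ and the midpoint magnetic state, use the difference-of-squares identity on the lumped and $L^2$ inner products, and add the two lines so the Lorentz-force and induction terms cancel. If anything, your write-up is slightly more complete than the paper's, since you also treat the semi-discrete case explicitly and make the $\mu$-scaling of the magnetic test function (which the paper leaves implicit) precise.
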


\begin{proof} We consider the proof of the fully discrete scheme
  \eqref{SourceUpdateCN}. We take $\veltest_h = \vel_h^{n+\frac{1}{2}}$
  and $\Htest_h = \Hfield_h^{n+\frac{1}{2}}$ in \eqref{SourceUpdateCN}, the
  result follows by noting that
  \begin{align*}
    \langle \rho_h^n (\vel_h^{n+1} - \vel_h^{n}), \vel_h^{n+\frac{1}{2}} \rangle
=
    \sum_{i \in \HypVertices}  \tfrac{m_i}{2}\rho_i^n (\vel_i^{n+1} -
    \vel_i^{n})\cdot(\vel_i^{n+1} + \vel_i^{n}),
  \end{align*}
  using difference of squares, and adding both lines leading to the cancellation
  of right hand side terms.
\end{proof}

\begin{proposition}[Preservation of the weak-divergence]\label{WeakDivProp}
  Assume that $\FESpaceH$ is the curl-conforming $\text{BDM}_1$ space,
  as defined in \eqref{BDMspace}. Then, we have that the solution of
  \eqref{semiDiscSource} and \eqref{SourceUpdateCN} will satisfy
  \begin{align}\label{DiscFree}
    (\Hfield_h^{n+1}, \nabla\omega_h) = (\Hfield_h^n, \nabla\omega_h)
  \end{align}
  for all $\omega_h \in \FESpacePot$, with $ \FESpacePot$ as defined in
  \eqref{PotSpace}. Note that \eqref{DiscFree} is nothing else than the discrete
  counterpart of weak divergence property.
\end{proposition}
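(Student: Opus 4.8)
The plan is to leverage the discrete exact sequence property noted immediately after \eqref{PotSpace}, namely the inclusion $\nabla\FESpacePot \subset \FESpaceH$. This is the only structural ingredient needed: it guarantees that for every $\omega_h \in \FESpacePot$ the gradient $\nabla\omega_h$ is itself an admissible element of the curl-conforming space $\FESpaceH$, and therefore may be inserted as a test function into the induction-equation line of either scheme.

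I would begin with the fully discrete Crank--Nicolson scheme. Choosing $\Htest_h = \nabla\omega_h$ in the induction equation \eqref{SourceUpdateCNInduc}, the right-hand side becomes $\dt_n \int_{\Omega} (\Hfield_h^{n+\frac{1}{2}} \times \curl{}(\nabla\omega_h)) \cdot \vel_h^{n+\frac{1}{2}} \dx$. Since the curl of a gradient vanishes identically, $\curl{}(\nabla\omega_h) \equiv \bzero$, the integrand is zero pointwise and the entire right-hand side drops out, irrespective of the intermediate states $\Hfield_h^{n+\frac{1}{2}}$ and $\vel_h^{n+\frac{1}{2}}$. What remains is exactly $(\Hfield_h^{n+1} - \Hfield_h^n, \nabla\omega_h) = 0$, which is the claimed identity \eqref{DiscFree}. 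As $\omega_h \in \FESpacePot$ was arbitrary, this settles the fully discrete case.

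For the semi-discrete scheme \eqref{semiDiscSource} the same substitution $\Htest_h = \nabla\omega_h$ in the second line yields $(\partial_t \Hfield_h, \nabla\omega_h) = (\Hfield_h \times \curl{}(\nabla\omega_h), \vel_h) = 0$, using once more that $\curl{}(\nabla\omega_h) \equiv \bzero$. Hence $\frac{\mathrm{d}}{\mathrm{d}t}(\Hfield_h, \nabla\omega_h) = 0$, so the quantity $(\Hfield_h(t), \nabla\omega_h)$ is constant in time; integrating from $t_n$ to $t_{n+1}$ delivers \eqref{DiscFree}.

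There is no genuine computational obstacle here; the argument is a one-line testing trick. The delicate point --- and the real reason the construction succeeds --- is entirely structural: it is essential that $\nabla\omega_h$ belongs to $\FESpaceH$ so that it is a \emph{legitimate} test function, which is precisely what the pairing of the curl-conforming $\text{BDM}_1$ space \eqref{BDMspace} with the continuous $\mathbb{P}_2$ space \eqref{PotSpace} supplies through the discrete de Rham subcomplex. Equally important is that $\nabla\omega_h$ is a \emph{true} gradient rather than a discrete surrogate, so the identity $\curl{}\nabla \equiv \bzero$ holds exactly and not merely up to consistency error; this is what promotes the weak-divergence preservation to an exact, machine-accuracy statement rather than an approximate one.
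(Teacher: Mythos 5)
Your proof is correct and follows exactly the paper's argument: use the inclusion $\nabla\FESpacePot \subset \FESpaceH$ to admit $\nabla\omega_h$ as a test function in the induction equation, and conclude via $\curl{}\nabla\omega_h \equiv \bzero$. Your explicit treatment of the semi-discrete case is a minor addition the paper leaves implicit, but the mechanism is identical.
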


\begin{proof} The proof follows from the fact that the inclusion
  $\nabla\FESpacePot \subset \FESpaceH$ holds true, therefore $\nabla\omega_h$
  is a valid test function of \eqref{SourceUpdateCNInduc} (for all $\omega_h
  \in \FESpacePot$). Inserting this test function into
  \eqref{SourceUpdateCNInduc} we get
  \begin{align*}
    \int_{\Omega} (\Hfield_h^{n+1} - \Hfield_h^{n})\cdot \nabla\omega_h
    &=
      \dt_n \int_{\Omega}  (\Hfield_h^{n+\frac{1}{2}} \times
      \curl{}\nabla\omega_h)\cdot
      \vel_h^{n+\frac{1}{2}} \dx \, ,
  \end{align*}
  where the right hand side is zero since $\curl{}\nabla\omega_h \equiv
  \bzero$.
\end{proof}

Scheme \eqref{SourceUpdateCN} defines the numerical procedure
used to update the momentum and magnetic field during the evolution of Stage
\#2: we summarize its implementation in Algorithm \ref{VelAndHcnUpdate} as a
function with input and return arguments. However, Algorithm
\ref{VelAndHcnUpdate} does not prescribe the evolution of the density $\rho$ and
total mechanical energy $\totme$. We observe in \eqref{Operator2rev} that the
density does not evolve during the evolution of
Stage \#2. On other hand, we will use \eqref{TotmeEvoRule} in order to
update the total mechanical energy. We summarize the entire update
for Stage \#2 in Algorithm \ref{SourceCNalg}.

\begin{algorithm}[H]
  \caption{\texttt{momentum\_and\_h\_field\_update}($
    \{\rho_h^n, \mom_h^{n}, \Hfield_h^{n}, \dt_n \}$)}
  \label{VelAndHcnUpdate}
  \begin{align*}
    &\texttt{Define:}\, \vel_h^{n} := \sum_{i \in \HypVertices}
      \tfrac{\mom_i^{n}}{\rho_i^n} \phi_i \\
    &\texttt{Find:} \{\vel_h^{n+1},\Hfield_h^{n+1}\} \in [\FESpaceHypComp]^2
\times
      \FESpaceH \ \texttt{such that} \\
    &\ \ \ \ \ \ \ \ \langle \rho_h^n (\vel_h^{n+1} - \vel_h^{n}), \veltest_h
      \rangle =
      - \dt_n \mu \int_{\Omega} (\Hfield_h^{n+\frac{1}{2}} \times
      \curl{}\Hfield_h^{n+\frac{1}{2}})\cdot \veltest_h \dx \,
      , \\
    &\ \ \ \ \ \int_{\Omega} (\Hfield_h^{n+1} - \Hfield_h^{n})\cdot \Htest_h
      =
      \dt_n \int_{\Omega}  (\Hfield_h^{n+\frac{1}{2}} \times
      \curl{}\Htest_h)\cdot
      \vel_h^{n+\frac{1}{2}} \dx \\
    &\texttt{where }\vel_h^{n+\frac{1}{2}} := \tfrac{1}{2}(\vel_h^{n} +
      \vel_h^{n+1}) \texttt{ and } \Hfield_h^{n+\frac{1}{2}} :=
      \tfrac{1}{2}(\Hfield_h^{n} + \Hfield_h^{n+1}). \\
    &\texttt{Define:}\, \mom_h^{n+1} := \sum_{i \in \HypVertices}
      (\vel_i^{n+1} \rho_i^n) \phi_i \\
    &\texttt{Return:} \{\mom_h^{n+1}, \Hfield_h^{n+1}\}
  \end{align*}
\end{algorithm}

\begin{algorithm}[H]
  \caption{\texttt{source\_update}($\{\rho_h^n, \mom_h^{n},
    \totme_h^n, \Hfield_h^{n}, \dt_n\}$)}
  \label{SourceCNalg}
  \begin{align}
    \nonumber
    &\{\mom_h^{n+1}, \Hfield_h^{n+1}\} :=
      \texttt{momentum\_and\_h\_field\_update}(
      \{\rho_h^n, \mom_h^{n}, \Hfield_h^{n}, \dt_n \}) \\
    \nonumber
    &\texttt{for } i \in \HypVertices \\
    \label{UpdateDens}
    &\ \ \dens_i^{n+1} := \dens_i^{n} \\
    \label{UpdateTotMe}
    &\ \ \totme_i^{n+1} := \totme_i^{n} - \tfrac{1}{2 \dens_i^n}
      |\mom_i^{n}|^2 + \tfrac{1}{2 \dens_i^{n+1}} |\mom_i^{n+1}|^2 \\
    \nonumber
    &\texttt{end for} \\
    \nonumber
    &\texttt{Return:} \{\rho_h^{n+1}, \mom_h^{n+1}, \totme_h^{n+1},
      \Hfield_h^{n+1}\}
  \end{align}
  \textit{Comments: the method }
  \texttt{momentum\_and\_h\_field\_update} \textit{ is
    described in Algorithm \ref{VelAndHcnUpdate}.}
\end{algorithm}

\begin{lemma}[Properties preserved by Algorithm
  \ref{SourceCNalg}]\label{SourceLemma} The
  scheme $\texttt{source\_update}$, described by Algorithm
  \ref{SourceCNalg}, preserves the following global energy
  \begin{align}
    \label{CNconservation}
    \sum_{i \in \HypVertices} m_i \totme_i^{n+1}
    + \tfrac{\mu}{2} \|\Hfield_h^{n+1}\|_{L^2(\Omega)}^2
    =
    \sum_{i \in \HypVertices} m_i \totme_i^{n}
    + \tfrac{\mu}{2} \|\Hfield_h^{n}\|_{L^2(\Omega)}^{2},
  \end{align}
  as well as pointwise properties
  \begin{align*}
    \inte(\state_i^{n+1}) = \inte(\state_i^{n}) , \
    s(\state_i^{n+1}) = s(\state_i^{n}) , \
    \entropy(\state_i^{n+1}) = \entropy(\state_i^{n})
    \text{ for all } i \in \HypVertices \, ,
  \end{align*}
  with $\state_i^{n} = [\rho_i^n, \mom_i^n, \totme_i^n]^\transp$. In particular,
  this implies that the following global property for the mathematical entropy
  \begin{align}\label{GlobalEntropy}
    \sum_{i \in \HypVertices} m_ i \entropy(\state_i^{n+1}) = \sum_{i \in
    \HypVertices} m_ i \entropy(\state_i^{n}).
  \end{align}
\end{lemma}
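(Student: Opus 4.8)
The plan is to exploit the fact that Algorithm \ref{SourceCNalg} is assembled from two ingredients whose relevant conservation properties are already available: the Crank--Nicolson momentum/magnetic update \eqref{SourceUpdateCN}, for which Lemma \ref{ConseQuantRemark} supplies the discrete kinetic-plus-magnetic energy identity \eqref{EnergyDiscSource}, and the nodal total-mechanical-energy rule \eqref{UpdateTotMe}, which is the discrete analogue of the pointwise internal-energy invariance \eqref{InvarInte}. I would first establish the pointwise statements, then read off the global entropy equality, and finally obtain the global energy identity \eqref{CNconservation} as an almost immediate consequence.

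First I would prove pointwise preservation of the internal energy by direct substitution. Writing the nodal internal energy as $\inte(\state_i^{n+1}) = \totme_i^{n+1} - \tfrac{1}{2\dens_i^{n+1}}|\mom_i^{n+1}|^2$ and inserting the definition \eqref{UpdateTotMe} of $\totme_i^{n+1}$, the two copies of $\tfrac{1}{2\dens_i^{n+1}}|\mom_i^{n+1}|^2$ cancel, leaving $\inte(\state_i^{n+1}) = \totme_i^n - \tfrac{1}{2\dens_i^n}|\mom_i^n|^2 = \inte(\state_i^n)$, where I used $\dens_i^{n+1} = \dens_i^n$ from line \eqref{UpdateDens}. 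Since the specific entropy obeys the functional dependence $s = s(\dens, \specinte)$ with $\specinte_i = \dens_i^{-1}\inte_i$, and both $\dens_i$ and $\inte_i$ are unchanged, it follows that $s(\state_i^{n+1}) = s(\state_i^n)$ and hence $\entropy(\state_i^{n+1}) = -\dens_i^{n+1} s(\state_i^{n+1}) = -\dens_i^n s(\state_i^n) = \entropy(\state_i^n)$. Summing this last identity against $m_i$ over $i \in \HypVertices$ gives the global entropy equality \eqref{GlobalEntropy} at once.

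For the global energy identity \eqref{CNconservation}, I would split the lumped total mechanical energy into internal and kinetic parts, $\sum_i m_i \totme_i^{n+1} = \sum_i m_i \inte(\state_i^{n+1}) + \sum_i \tfrac{m_i}{2}\dens_i^{n+1}|\vel_i^{n+1}|^2$, using the nodal relation $\mom_i^{n+1} = \dens_i^n \vel_i^{n+1}$ together with $\dens_i^{n+1} = \dens_i^n$ from Algorithm \ref{VelAndHcnUpdate} to identify $\tfrac{|\mom_i^{n+1}|^2}{2\dens_i^{n+1}} = \tfrac{1}{2}\dens_i^{n+1}|\vel_i^{n+1}|^2$. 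Adding $\tfrac{\mu}{2}\|\Hfield_h^{n+1}\|_{L^2(\Omega)}^2$ to both sides, the internal-energy sum is invariant by the previous step, while the remaining kinetic-plus-magnetic sum is invariant by Lemma \ref{ConseQuantRemark}; recombining the internal and kinetic contributions at time $t_n$ back into $\sum_i m_i \totme_i^n$ then yields \eqref{CNconservation}.

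There is no serious obstacle here: each step is either a one-line algebraic cancellation engineered into the update rule or a direct appeal to an already-proven identity. The only point demanding mild care is the bookkeeping between the collocated nodal representation and the lumped energy, namely ensuring that the nodal kinetic energy $\tfrac{1}{2}\dens_i|\vel_i|^2$ appearing in \eqref{EnergyDiscSource} is exactly the kinetic contribution $\tfrac{|\mom_i|^2}{2\dens_i}$ extracted from the lumped total mechanical energy. This holds precisely because the momentum and velocity degrees of freedom are tied nodally by $\mom_i = \dens_i \vel_i$ and the density is frozen throughout the stage.
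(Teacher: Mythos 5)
Your proposal is correct and follows essentially the same route as the paper: both proofs combine the kinetic-plus-magnetic energy identity from Lemma \ref{ConseQuantRemark} with the internal-energy invariance built into the update rule \eqref{UpdateTotMe} (together with \eqref{UpdateDens}) to obtain \eqref{CNconservation}, and derive the pointwise and global entropy statements directly from that construction. The only difference is ordering (you establish the pointwise invariances first and the global energy last, while the paper does the reverse) plus your slightly more careful bookkeeping of the nodal relation $\mom_i = \dens_i \vel_i$, neither of which changes the substance of the argument.
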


\begin{proof} From Lemma \ref{ConseQuantRemark} we know that
  \begin{align}\label{SourceUpStab}
    \sum_{i \in \HypVertices}
    \tfrac{m_i}{2 \dens_i^{n+1}} |\mom_i^{n+1}|^2
    + \tfrac{\mu}{2 } \|\Hfield_h^{n+1}\|_{L^2(\Omega)}^{2}
    =
    \sum_{i \in \HypVertices}
    \tfrac{m_i}{2 \dens_i^{n}} |\mom_i^{n}|^2
    + \tfrac{\mu}{2 } \|\Hfield_h^{n}\|_{L^2(\Omega)}^{2}.
  \end{align}
  Multiplying \eqref{UpdateTotMe} by $m_i$, reorganizing, and
  adding for all nodes we get
  \begin{align*}
    \sum_{i \in \HypVertices} m_i \big( \totme_i^{n+1} - \tfrac{1}{2
    \dens_i^{n+1}} |\mom_i^{n+1}|^2 \big)
    =
    \sum_{i \in \HypVertices} m_i \big( \totme_i^n - \tfrac{1}{2 \dens_i^{n}}
    |\mom_i^{n}|^2 \big).
  \end{align*}
  Adding this last result to both sides of \eqref{SourceUpStab} yields
  \eqref{CNconservation}. Note that, \eqref{UpdateTotMe} implies pointwise
  invariance of the internal energy $\inte(\state)$ by construction,
  which combined with the invariance of the density \eqref{UpdateDens} are
  enough to guarantee pointwise preservation of the specific and mathematical
  entropy. Finally, \eqref{GlobalEntropy} follows from the pointwise
  preservation of the mathematical entropy.
\end{proof}

%%%%%%%%%%%%%%%%%%%%%%%%%%%%%%%%%%%%%%%%%%%%%%%%%%%%%%%%%%%%%%%%%%%%%%%%%%%%%%%%
%%%%%%%%%%%%%%%%%%%%%%%%%%%%%%%%%%%%%%%%%%%%%%%%%%%%%%%%%%%%%%%%%%%%%%%%%%%%%%%%
%%%%%%%%%%%%%%%%%%%%%%%%%%%%%%%%%%%%%%%%%%%%%%%%%%%%%%%%%%%%%%%%%%%%%%%%%%%%%%%%
%%%%%%%%%%%%%%%%%%%%%%%%%%%%%%%%%%%%%%%%%%%%%%%%%%%%%%%%%%%%%%%%%%%%%%%%%%%%%%%%
%%%%%%%%%%%%%%%%%%%%%%%%%%%%%%%%%%%%%%%%%%%%%%%%%%%%%%%%%%%%%%%%%%%%%%%%%%%%%%%%
%%%%%%%%%%%%%%%%%%%%%%%%%%%%%%%%%%%%%%%%%%%%%%%%%%%%%%%%%%%%%%%%%%%%%%%%%%%%%%%%
%%%%%%%%%%%%%%%%%%%%%%%%%%%%%%%%%%%%%%%%%%%%%%%%%%%%%%%%%%%%%%%%%%%%%%%%%%%%%%%%

\subsection{The MHD update scheme}\label{sec:FirstScheme}

The Marchuck-Strang splitting scheme involves three steps: the first one using
full time-step $\dt_n$, advancing Operator \#1 described in \eqref{Operator1},
the second step using a double size time-step $2 \dt_n$ evolving in time the
Operator \#2 described in \eqref{Operator2rev}, and a third step using a full
time-step $\dt_n$ evolving Operator \#1 again. We summarize the scheme in
Algorithm \eqref{MHDupdateAlg}.
\begin{algorithm}[H]
  \caption{\texttt{mhd\_update}($\{\rho_h^n, \mom_h^{n},
    \totme_h^n, \Hfield_h^{n}\}$)}
  \label{MHDupdateAlg}
  \begin{align}
    \nonumber
    \{\rho_h^1, \mom_h^{1}, \totme_h^1, \dt_n\} &:=
    \texttt{euler\_system\_update}(\{\rho_h^n, \mom_h^{n}, \totme_h^n\}) \\
    \nonumber
    \Hfield_h^{1} &:= \Hfield_h^n \\
    \nonumber
    \{\rho_h^2, \mom_h^{2}, \totme_h^2, \Hfield_h^2\} &:=
    \texttt{source\_update}
    (\{\rho_h^1, \mom_h^{1}, \totme_h^1, \Hfield_h^{1}, 2 \dt_n\}) \\
    \nonumber
    \{\rho_h^{n+1}, \mom_h^{n+1}, \totme_h^{n+1}\}
    &:=
    \texttt{euler\_system\_update}
    (\{\rho_h^2, \mom_h^{2}, \totme_h^2, \dt_n\}) \\
    \nonumber
    \Hfield_h^{n+1} &:= \Hfield_h^2 \\
    \nonumber
    \texttt{Return}&: \{\rho_h^{n+1}, \mom_h^{n+1}, \totme_h^{n+1},
                      \Hfield_h^{n+1}\}
  \end{align}
  \textit{Comments: the procedure }\texttt{euler\_system\_update} \textit{
    represents an Euler's solver satisfying the assumptions described in Section
    \ref{sec:hypminimal}. On the other hand, the implementation of}
    \texttt{source\_update} \textit{is described in Algorithm \ref{SourceCNalg}.
    Note that the time-step size $\dt_n$ is determined during the
    first call to }\texttt{\texttt{euler\_system\_update}}\textit{, then the
    method} \texttt{source\_update} \textit{and the second call to}
    \texttt{\texttt{euler\_system\_update}} \textit{have to comply with such
    time-step size. Finally, note that the return argument
    $\state_h^{n+1} = [\rho_h^{n+1}, \mom_h^{n+1}, \totme_h^{n+1},
    \Hfield_h^{n+1}]^\transp$ represents the solution at time $t^{n+1} =
    t^n + 2\dt^n$.}
\end{algorithm}

\begin{proposition}[Properties preserved by \texttt{mhd\_update}]
  Assuming periodic boundary conditions, and that the Euler scheme underlying
  $\texttt{euler\_system\_update}$ satisfies the assumptions described in
Section
  \ref{sec:hypminimal}, then the procedure $\texttt{mhd\_update}$ described by
  Algorithm \ref{MHDupdateAlg} preserves the following global estimate
  \begin{align*}
    \sum_{i \in \HypVertices} m_i \totme_i^{n+1}
    + \tfrac{\mu}{2} \|\Hfield_h^{n+1}\|_{L^2(\Omega)}^2
    &=
      \sum_{i \in \HypVertices} m_i \totme_i^{n}
      + \tfrac{\mu}{2} \|\Hfield_h^{n}\|_{L^2(\Omega)}^{2} \ ,
  \end{align*}
  as well as pointwise admissibility $\state_i^{n+1} \in \mathcal{A}$ for all
  $i \in \HypVertices$ with $\mathcal{A}$ as defined in \eqref{DefAdmissibleA}.
  The scheme also preserves the weak divergence property $(\Hfield_h^{n+1},
  \nabla\omega_h) = (\Hfield_h^n, \nabla\omega_h)$ for all $\omega_h \in
  \FESpacePot$. If in addition, we assume that property
  \eqref{EulerSolverDissipation} for $\texttt{euler\_system\_update}$ holds, then
  we also have the global entropy estimate:
  \begin{align*}
    \sum_{i \in \HypVertices} m_i \eta(\state_i^{n+1}) \leq \sum_{i \in
    \HypVertices} m_i
    \eta(\state_i^{n}).
  \end{align*}
\end{proposition}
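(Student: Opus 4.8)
The plan is to exploit the fact that \texttt{mhd\_update} is nothing but the composition of three sub-updates already analyzed in the excerpt — an Euler step over $\dt_n$, a \texttt{source\_update} over $2\dt_n$, and a second Euler step over $\dt_n$ — each of which preserves the four quantities in question. I would introduce the intermediate states $\state^1 = [\rho_h^1, \mom_h^1, \totme_h^1, \Hfield_h^1]$ (after the first Euler call, with $\Hfield_h^1 = \Hfield_h^n$) and $\state^2 = [\rho_h^2, \mom_h^2, \totme_h^2, \Hfield_h^2]$ (after the source call), exactly as produced in Algorithm \ref{MHDupdateAlg}, and then establish each claim by chaining the corresponding property across the three stages. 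Since every preserved quantity is stated as an exact equality (or, for the mathematical entropy, a one-sided inequality), composition is immediate once each factor is handled separately.

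For the energy estimate, I would track the functional $\mathcal{E}(\state_h) := \sum_{i \in \HypVertices} m_i \totme_i + \tfrac{\mu}{2}\|\Hfield_h\|_{L^2(\Omega)}^2$ through each stage. Across the first Euler step, $\sum_i m_i \totme_i$ is conserved by the linear-invariant property \eqref{consMasss}, while $\Hfield_h^1 = \Hfield_h^n$ leaves the quadratic term untouched, so $\mathcal{E}(\state^1) = \mathcal{E}(\state^n)$ — the algebraic counterpart of Corollary \ref{CorollaryEnergyOp1}. Across the source step neither summand is individually conserved, but their sum is, by the energy identity \eqref{CNconservation} of Lemma \ref{SourceLemma}, giving $\mathcal{E}(\state^2) = \mathcal{E}(\state^1)$. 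The second Euler step repeats the first, yielding $\mathcal{E}(\state^{n+1}) = \mathcal{E}(\state^2)$, and chaining the three equalities gives the result. The structural point worth emphasizing is that $\mathcal{E}$ is preserved by each sub-step for a \emph{different} reason — a conserved linear invariant together with a frozen field in the Euler stages, versus a genuine linear-plus-quadratic balance in the source stage — and it is exactly this compatibility that makes the splitting energy-consistent.

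For pointwise admissibility I would use the admissibility assumption on \texttt{euler\_system\_update} from Section \ref{sec:hypminimal} to pass $\state_i^n \in \mathcal{A} \Rightarrow \state_i^1 \in \mathcal{A}$, then invoke Lemma \ref{SourceLemma}: the source step leaves $\dens_i$ and $\inte(\state_i)$ pointwise invariant, and since the two defining inequalities of $\mathcal{A}$ in \eqref{DefAdmissibleA} are precisely $\dens_i > 0$ and $\inte(\state_i) > 0$, we obtain $\state_i^2 \in \mathcal{A}$; a second application of the Euler admissibility assumption yields $\state_i^{n+1} \in \mathcal{A}$. For the weak-divergence property, the two Euler steps are trivial because $\Hfield_h^1 = \Hfield_h^n$ and $\Hfield_h^{n+1} = \Hfield_h^2$ leave $(\Hfield_h, \nabla\omega_h)$ untouched, while across the source step Proposition \ref{WeakDivProp} gives $(\Hfield_h^2, \nabla\omega_h) = (\Hfield_h^1, \nabla\omega_h)$ for all $\omega_h \in \FESpacePot$; composing the three shows $(\Hfield_h^{n+1}, \nabla\omega_h) = (\Hfield_h^n, \nabla\omega_h)$.

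Finally, under the additional hypothesis \eqref{EulerSolverDissipation}, the entropy estimate follows by the same telescoping, now mixing two inequalities with one equality: each Euler step contributes $\sum_i m_i \eta(\state_i^{\bullet+1}) \leq \sum_i m_i \eta(\state_i^{\bullet})$, while the source step contributes the exact equality \eqref{GlobalEntropy} of Lemma \ref{SourceLemma}; chaining the two inequalities around the central equality gives $\sum_i m_i \eta(\state_i^{n+1}) \leq \sum_i m_i \eta(\state_i^n)$. I do not expect a genuine obstacle: every ingredient is already in place and the argument is essentially bookkeeping. The only point requiring care — and the closest thing to a subtlety — is keeping the intermediate states $\state^1, \state^2$ straight and observing that the doubled central time-step $2\dt_n$ is irrelevant to all four conclusions, since each is a step-size-independent (in)equality rather than a statement about consistency or accuracy in $\dt_n$.
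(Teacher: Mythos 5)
Your proposal is correct and follows essentially the same route as the paper's own proof: the paper likewise argues by the sequential nature of the Strang splitting, citing Lemma \ref{ConseQuantRemark}, Proposition \ref{WeakDivProp}, Lemma \ref{SourceLemma}, and the assumptions of Section \ref{sec:hypminimal} to conclude that each stage preserves energy, admissibility, weak divergence, and entropy stability. The only difference is that the paper states this chaining in two sentences, whereas you spell out the stage-by-stage bookkeeping (including the observation that the Euler stages leave the magnetic terms untouched while the source stage conserves the combined linear-plus-quadratic functional), which is a faithful expansion rather than a different argument.
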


\begin{proof} Key results were proven in Corollary Lemma
  \ref{ConseQuantRemark}, Proposition \ref{WeakDivProp}, and Lemma
  \ref{SourceLemma}. Under the assumptions of the lemma, each stage of Strang
  splitting preserves energy-stability, admissibility, weak-divergence,
  and entropy stability. The proof follows from the sequential nature of operator
  splitting and the assumptions on $\texttt{euler\_system\_update}$ described
  Section \ref{sec:hypminimal}.
\end{proof}

%%%%%%%%%%%%%%%%%%%%%%%%%%%%%%%%%%%%%%%%%%%%%%%%%%%%%%%%%%%%%%%%%%%%%%%%%%%%%%%%
%%%%%%%%%%%%%%%%%%%%%%%%%%%%%%%%%%%%%%%%%%%%%%%%%%%%%%%%%%%%%%%%%%%%%%%%%%%%%%%%
%%%%%%%%%%%%%%%%%%%%%%%%%%%%%%%%%%%%%%%%%%%%%%%%%%%%%%%%%%%%%%%%%%%%%%%%%%%%%%%%
%%%%%%%%%%%%%%%%%%%%%%%%%%%%%%%%%%%%%%%%%%%%%%%%%%%%%%%%%%%%%%%%%%%%%%%%%%%%%%%%
%%%%%%%%%%%%%%%%%%%%%%%%%%%%%%%%%%%%%%%%%%%%%%%%%%%%%%%%%%%%%%%%%%%%%%%%%%%%%%%%

\section{Numerical experiments}\label{sec:numexp}
In this section, we demonstrate the capability of the proposed scheme through
several numerical experiments. Second-order accuracy for smooth problems is
confirmed in Section~\ref{sec:numerical:accuracy}. The results for the popular
1D Riemann Brio-Wu problem \cite{Brio_Wu_1988} is reported in
Section~\ref{sec:numerical:briowu}. In Sections~\ref{sec:numerical:blast} and
\ref{sec:numerical:jet}, we look at the two challenging MHD benchmarks: the
blast problem \cite{Balsara_1999}, and the astrophysical jet problem
\cite{Wu2018}.
\subsection{Accuracy test: smooth isentropic vortex \cite{Wu2018}}
\label{sec:numerical:accuracy}
The computational domain is a square $[-10, 10] \times [-10, 10]$. We start
with the ambient solution: the velocity $\vel_{\infty}=(1,1)^\top$, the magnetic
field $\Bfield_{\infty}=(0,0)^\top$, and the pressure $p_{\infty}=1$. At each
spatial point $(x_0, x_1)^\top$, we define the radius from it to the origin
$r=\sqrt{x_0^2+x_1^2}$. The vortex is initialized by adding smooth pertubations
to the velocity, magnetic field, and pressure of the ambient solution,
\begin{align*}
  \vel = \vel_{\infty}+(-x_1, x_0)\delta v, \quad \Bfield =
  \Bfield_{\infty}+(-x_1, x_0)\delta B, \quad p = p_{\infty}+(-x_1, x_0)\delta
p,
\end{align*}
where
\begin{align*}
  \delta v=\tfrac{\kappa}{2\pi}\mathrm{e}^{0.5(1-r^2)}, \quad
  \delta B = \tfrac{\mu}{2\pi}\mathrm{e}^{0.5(1-r^2)}, \quad
  \delta p = \tfrac{\mu^2(1-r^2)-\kappa^2}{8\pi^2}\mathrm{e}^{1-r^2}.
\end{align*}
The real numbers $\mu$ and $\kappa$ are vortex strength parameters. In this
test, we set $\kappa=\sqrt{2}\mu$ similar to \cite{Wu2018}.
The final time is $T=0.05$. The CFL number is chosen to be 0.1. The adiabatic
gas constant is $\gamma = \frac{5}{3}$. The convergence results with $\mu=1$ is
presented in Table~\ref{table:convergence_smooth1}, $\mu=5.38948943$ in
Table~\ref{table:convergence_smooth9}, and $\mu=5.389489439$ in
Table~\ref{table:convergence_smooth_P12}. In the second and the third case,
the pressure at the vortex centre is very close to zero: $3.3\times10^{-9}$ in
the second case, and $5.3\times10^{-12}$ in the third case. We want to examine
how this affects the convergence rates.

\begin{table}[h!]
  \centering
  \caption{Convergence of velocity and magnetic field on smooth solutions.
  Parameter $\mu=1.0$.}
  \label{table:convergence_smooth1}
  \begin{tabular}{|c|c|c|c|c|c|c|c|}
    \hline
    {} &    \#DOFs  &   L$^1$    &   Rate &    L$^2$   &   Rate & L$^\infty$   &
                                                                                 Rate \\ \hline\hline
    \multirow{4}{*}{\rotatebox[origin=c]{90}{$\vel_h$}}
       &     1922 &   4.27E-04 &     -- &   2.34E-03 &     -- &   2.70E-02 &     -- \\
       &     7442 &   1.07E-04 &   2.04 &   5.98E-04 &   2.01 &   7.33E-03 &   1.93 \\
       &    29282 &   2.63E-05 &   2.05 &   1.47E-04 &   2.04 &   1.84E-03 &   2.02 \\
       &   116162 &   6.30E-06 &   2.08 &   3.55E-05 &   2.06 &   4.47E-04 &   2.05 \\ \hline
    \multirow{4}{*}{\rotatebox[origin=c]{90}{$\Bfield_h$}}
       &     5520 &   6.47E-02 &     -- &   7.41E-02 &     -- &   2.77E-02 &     -- \\
       &    21840 &   1.62E-02 &   2.01 &   1.88E-02 &   1.99 &   7.43E-03 &   1.91 \\
       &    86880 &   4.06E-03 &   2.01 &   4.72E-03 &   2.00 &   1.85E-03 &   2.02 \\
       &   346560 &   1.02E-03 &   2.00 &   1.18E-03 &   2.00 &   4.59E-04 &   2.01 \\ \hline
  \end{tabular}
\end{table}

\begin{table}[h!]
  \centering
  \caption{Convergence of velocity and magnetic field on smooth solutions.
  Parameter $\mu=5.38948943$. The minimum pressure is $3.3\times10^{-9}$.}
  \label{table:convergence_smooth9}
  \begin{tabular}{|c|c|c|c|c|c|c|c|}
    \hline
    {} &    \#DOFs  &   L$^1$    &   Rate &    L$^2$   &   Rate & L$^\infty$   &
                                                                                 Rate \\ \hline\hline
    \multirow{4}{*}{\rotatebox[origin=c]{90}{$\vel_h$}}
       &     1922 &   2.99E-03 &     -- &   1.63E-02 &     -- &   1.18E-01 &     -- \\
       &     7442 &   7.69E-04 &   2.00 &   4.54E-03 &   1.89 &   3.92E-02 &   1.63 \\
       &    29282 &   1.73E-04 &   2.18 &   1.03E-03 &   2.17 &   1.27E-02 &   1.64 \\
       &   116162 &   3.92E-05 &   2.15 &   2.22E-04 &   2.23 &   3.15E-03 &   2.02 \\ \hline
    \multirow{4}{*}{\rotatebox[origin=c]{90}{$\Bfield_h$}}
       &     5520 &   6.48E-02 &     -- &   7.41E-02 &     -- &   3.15E-02 &     -- \\
       &    21840 &   1.63E-02 &   2.01 &   1.89E-02 &   1.99 &   8.88E-03 &   1.84 \\
       &    86880 &   4.07E-03 &   2.01 &   4.73E-03 &   2.00 &   2.07E-03 &   2.11 \\
       &   346560 &   1.02E-03 &   2.00 &   1.18E-03 &   2.00 &   5.10E-04 &   2.02 \\ \hline
  \end{tabular}
\end{table}

\begin{table}[h!]
  \centering
  \caption{Convergence of velocity and magnetic field on smooth solutions.
  Parameter $\mu=5.389489439$. The minimum pressure is $5.3\times10^{-12}$.}
  \label{table:convergence_smooth_P12}
  \begin{tabular}{|c|c|c|c|c|c|c|c|}
    \hline
    {} &    \#DOFs  &   L$^1$    &   Rate &    L$^2$   &   Rate & L$^\infty$   &
                                                                                 Rate \\ \hline\hline
    \multirow{4}{*}{\rotatebox[origin=c]{90}{$\vel_h$}}
       &     1922 &   2.89E-03 &     -- &   1.63E-02 &     -- &   1.17E-01 &     -- \\
       &     7442 &   7.30E-04 &   2.03 &   4.47E-03 &   1.91 &   3.92E-02 &   1.62 \\
       &    29282 &   1.66E-04 &   2.16 &   1.06E-03 &   2.10 &   1.48E-02 &   1.42 \\
       &   116162 &   3.86E-05 &   2.12 &   2.55E-04 &   2.07 &   5.43E-03 &   1.46 \\ \hline
    \multirow{4}{*}{\rotatebox[origin=c]{90}{$\Bfield_h$}}
       &     5520 &   6.48E-02 &     -- &   7.41E-02 &     -- &   3.27E-02 &     -- \\
       &    21840 &   1.63E-02 &   2.01 &   1.89E-02 &   1.99 &   9.15E-03 &   1.85 \\
       &    86880 &   4.08E-03 &   2.01 &   4.74E-03 &   2.00 &   3.01E-03 &   1.61 \\
       &   346560 &   1.02E-03 &   2.00 &   1.19E-03 &   2.00 &   1.22E-03 &   1.31 \\ \hline
  \end{tabular}
\end{table}
Overall, we obtain second-order accuracy in both $L^1$- and $L^2$- norms.
However, we note that $L^\infty$ rates in
Table~\ref{table:convergence_smooth_P12} are not sharp. This is
expected, since the results of Table~\ref{table:convergence_smooth_P12},
with a vacuum of $\mathcal{O}(10^{-12})$, are on the limit of what
can be meaningfully computed using double precision accuracy. For instance,
with such a strong vacuum, the accuracy of the map $\mom \mapsto \vel :=
\frac{\mom}{\rho}$, or even the computation of the internal energy, are just a
big stretch from reasonable expectations. We also notice that numerical linear
algebra technology starts to break down at such limits as well. For instance,
computational practice shows that, in the context of large non-symmetric
systems, it is nearly impossible to enforce relative tolerance of Krylov
methods much smaller than $\mathcal{O}(10^{-13})$. These errors propagate from
the solution of the source-system \eqref{SourceUpdateCN} into the rest of the
scheme. We have verified that by setting the slightly weaker vacuum of
$\mathcal{O}(10^{-11})$, we immediately recover sharp second-order rates in
$L^\infty$-norm.

\subsection{1D Riemann problem: Brio-Wu \cite{Brio_Wu_1988}}
\label{sec:numerical:briowu}
The Brio-Wu problem is a popular 1D benchmark for MHD schemes. The domain is
$[0,1]$. The initial solution is
\begin{align*}
  [\rho, \vel, p, \Bfield]^\transp =
  \begin{cases}
    [1, 0,0,1,0.75,1]^\transp, & x \in [0, 0.5), \\
    [0.125, 0,0,0.1,0.75,-1]^\transp, & x \in [0.5, 1].
  \end{cases}
\end{align*}
The adiabatic gas constant $\gamma=2$. The final time $T=0.1$. CFL number 0.1
is used. Despite the nonuniqueness of the Riemann solution, the solutions
obtained by almost all numerical schemes converge to a specific irregular
solution. We refer the interested readers to \cite{Takahashi_2013} and
references therein. To calculate the convergence rates, we compute a reference
solution using the Athena code \cite{Stone_2008} with 10000 grid intervals. The
density solution and the $y$-component of the magnetic solution when first-order
viscosity is used are shown in Figure~\ref{fig:briowu_firstorder}. The obtained
solution is smooth in all the components, including the magnetic field although
no regularization is added to it. High-order entropy-based viscosity is then
employed to lower the error. The convergence behavior of our numerical scheme is
shown in Table~\ref{table:convergence_briowu_density}. The density solution and
the $y$-component of the magnetic solution are shown in Figure~\ref{fig:briowu}.
From the convergence tables~\ref{table:convergence_briowu_density} and the
solution plots in Figure~\ref{fig:briowu}, our solution converges towards the
reference solution.

\begin{figure}[h!]
\centering
\begin{subfigure}{0.49\textwidth}
\centering
\includegraphics[width=\textwidth]{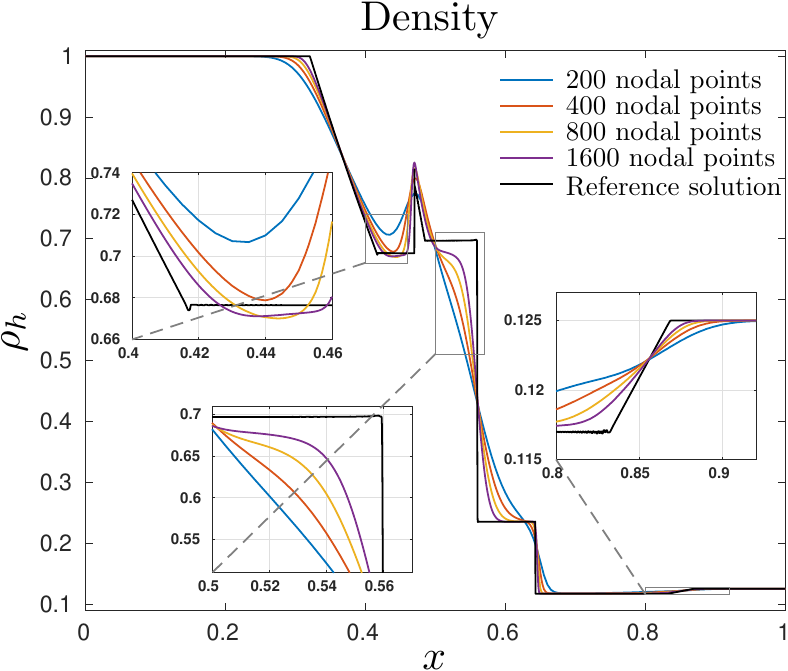}
\end{subfigure}
\hfill
\begin{subfigure}{0.49\textwidth}
\centering
\includegraphics[width=\textwidth]{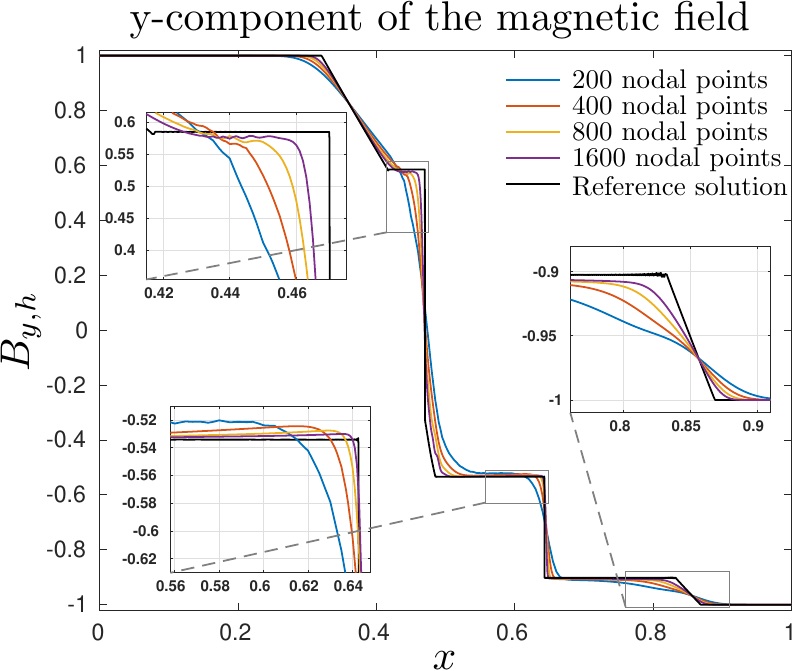}
\end{subfigure}
\caption{Density solution $\rho_h$ and the y-component of the magnetic field
$\Bfield_{y,h}$ solution to the Brio-Wu problem using first-order viscosity.
Note that even though the magnetic field has no viscous stabilization, the
solution shows no significant unphysical oscillation.}
\label{fig:briowu_firstorder}
\end{figure}

\begin{figure}[h!]
  \centering
  \begin{subfigure}{0.49\textwidth}
    \centering
    \includegraphics[width=\textwidth]{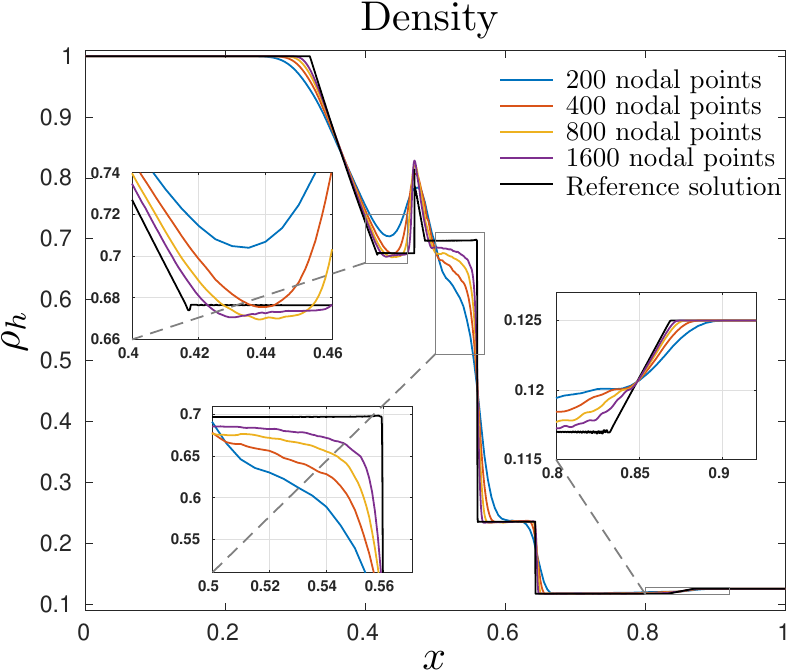}
  \end{subfigure}
  \hfill
  \begin{subfigure}{0.49\textwidth}
    \centering
    \includegraphics[width=\textwidth]{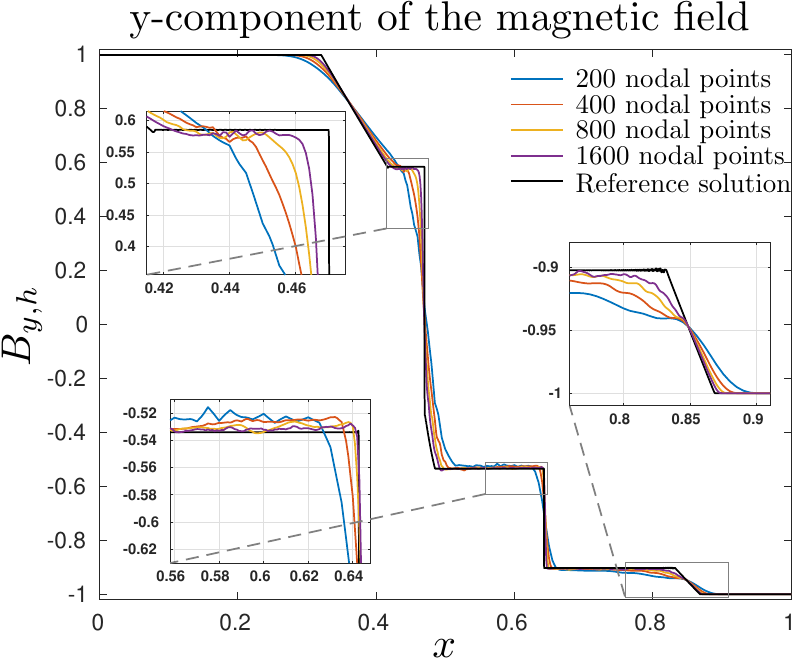}
  \end{subfigure}
\caption{Density solution $\rho_h$ and the y-component of the magnetic
field $\Bfield_{y,h}$ solution to the Brio-Wu problem using high-order entropy
viscosity method. Just like any other second-order (or higher) accuracy scheme,
some oscillations are to be expected. However, the natural expectation is that
unphysical oscillations do not persist further refinement.}
\label{fig:briowu}
\end{figure}

\begin{table}[h!]
  \centering
  \caption{Convergence rates of the density solution of the Brio-Wu problem
  using high-order entropy viscosity method}
  \label{table:convergence_briowu_density}
  \begin{tabular}{|c|c|c|c|c|}
    \hline
    \#nodes  &   L$^1$    &   Rate &    L$^2$&   Rate\\ \hline\hline
    100 &   3.11E-02 &     -- &   5.36E-02 &     -- \\
    200 &   1.89E-02 &   0.73 &   3.91E-02 &   0.46 \\
    400 &   1.17E-02 &   0.69 &   2.95E-02 &   0.41 \\
    800 &   7.17E-03 &   0.71 &   2.19E-02 &   0.43 \\
    1600 &   4.43E-03 &   0.69 &   1.62E-02 &   0.44\\ \hline
  \end{tabular}
\end{table}

\subsection{MHD Blast problem \cite{Balsara_1999}}
\label{sec:numerical:blast}

The domain is a square $[-0.5, 0.5]\times[-0.5, 0.5]$. Periodic boundary
conditions are imposed in both $x-$ and $y-$ directions. The solution is
initialized with an ambient fluid,
\begin{align*}
  [\rho, \vel, p, \Bfield]^\transp =
  \big[1, 0,0, 0.1, \tfrac{100}{\sqrt{4\pi}}, 0\big]^\transp.
\end{align*}
In the circle centered at the origin with radius $R=0.1$, the pressure is
initialized at $p=1000$. The 10 000 times pressure difference creates a strong
blast effect that is difficult for the numerical methods to capture. That is
because the pressure can easily become negative due to approximation errors.
The adiabatic gas constant is $\gamma=1.4$. The solution at $T=0.01$ is plotted
in Figure~\ref{fig:Blast}. The obtained solutions agree well with the existing
references, \eg \cite{Balsara_1999,Wu2018,Wu2019}. Detailed structures of the
solution are visible, and no oscillatory behaviors are observed.

\begin{figure}[h!]
  \centering
  \begin{subfigure}{0.48\textwidth}
    \centering
    \includegraphics[width=\textwidth]{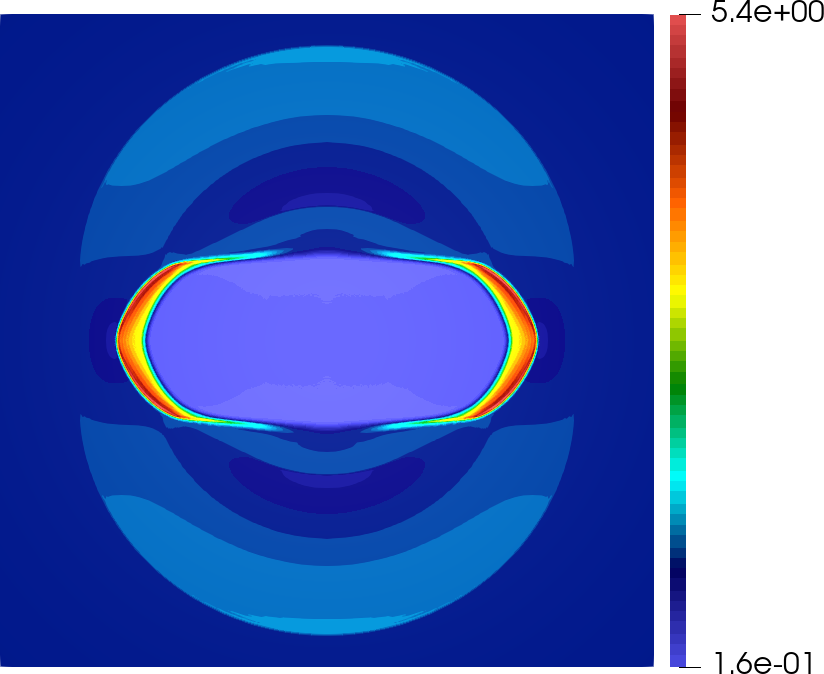}
    \caption{Density $\rho_h$}
  \end{subfigure}
  \hfill
  \begin{subfigure}{0.48\textwidth}
    \centering
    \includegraphics[width=\textwidth]{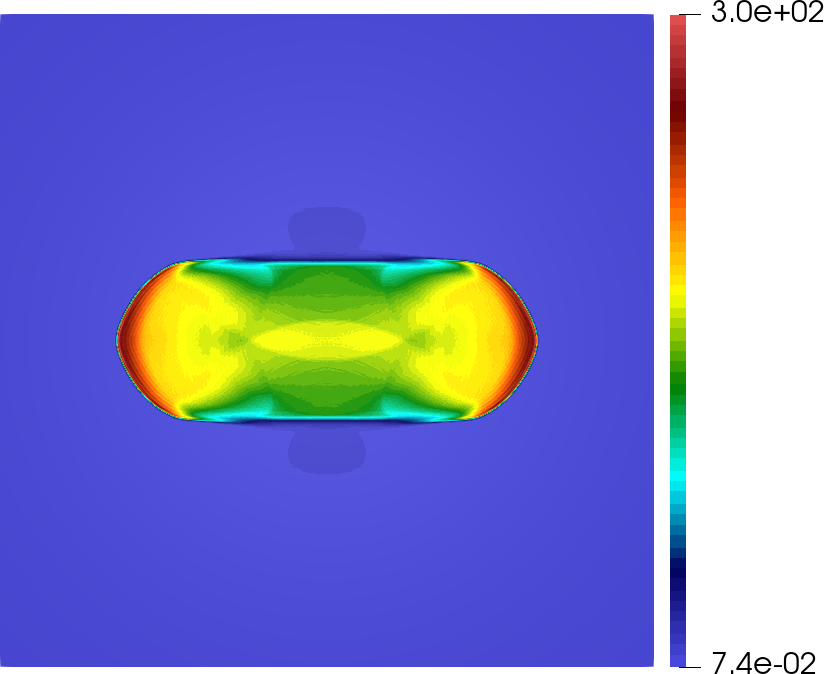}
    \caption{Hydrodynamic pressure $p_h$}
  \end{subfigure}
  \hfill
  \begin{subfigure}{0.48\textwidth}
    \centering
    \includegraphics[width=\textwidth]{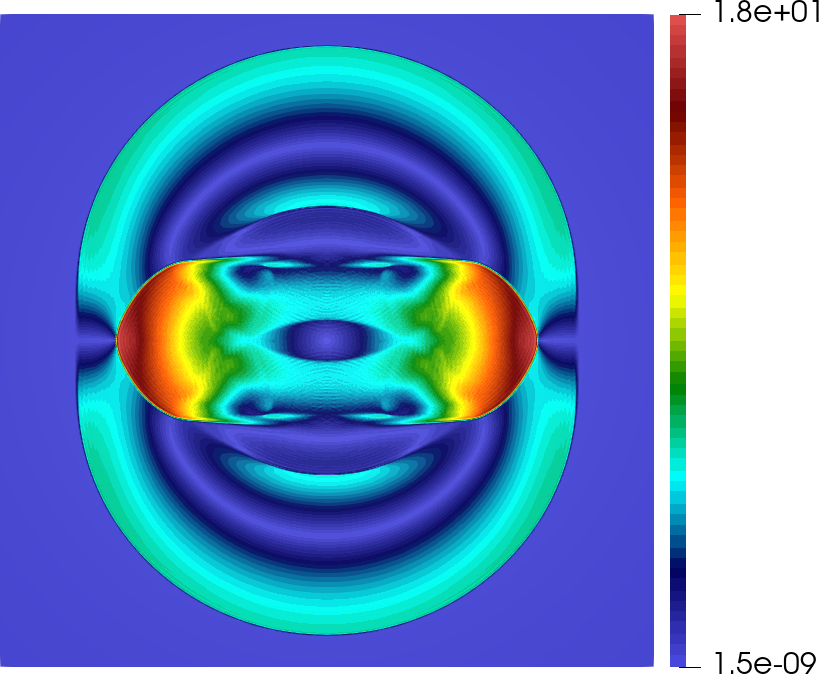}
    \caption{Velocity magnitude $|\vel_h|$}
  \end{subfigure}
  \hfill
  \begin{subfigure}{0.48\textwidth}
    \centering
    \includegraphics[width=\textwidth]{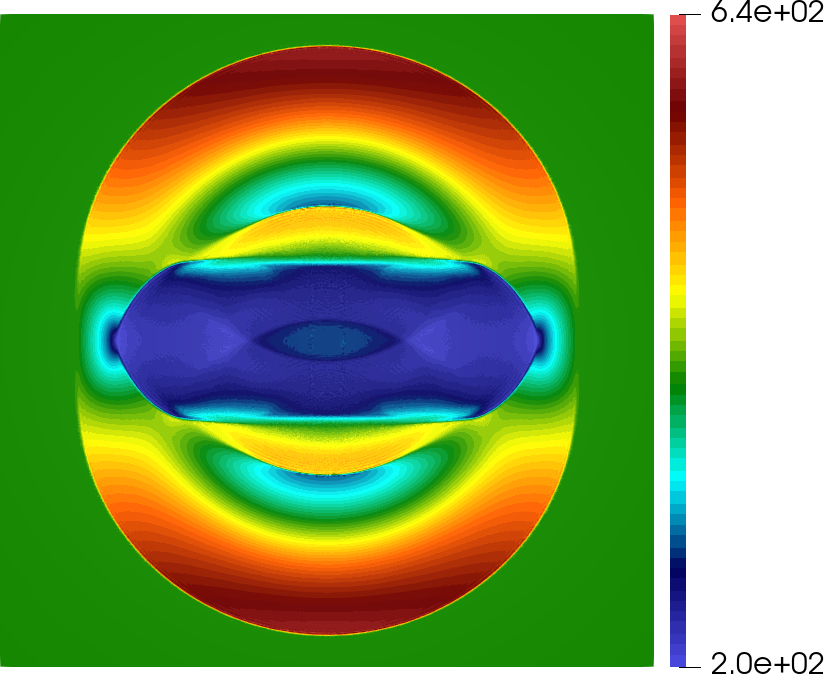}
    \caption{Magnetic pressure $\Bfield_h^2/2$}
  \end{subfigure}
  \hfill
  \caption{Solution to Blast problem at time $t=0.01$ using 290147 nodal points}
  \label{fig:Blast}
\end{figure}

\subsection{Astrophysical jet \cite{Wu2018}}
\label{sec:numerical:jet}

The last benchmark is proposed by \cite{Wu2018}. The domain is
$[-0.5, 0.5]\times [0, 1.5]$. The initial ambient fluid is given by
\begin{align*}
 [\rho, \vel, p, \Bfield]^\top = [0.1\gamma, 0,0, 1.0, 0,
B_a]^\transp.
\end{align*}
A Mach 800 inflow is set on the inlet boundary
$\xcoord \in (-0.05, 0,05) \times \{0\}$:
\begin{align*}
[\rho, \vel] = [\gamma,0, 800]^\transp.
\end{align*}
The adiabatic gas constant $\gamma=1.4$. The solution is simulated on half the
domain and reflecting boundary condition is imposed on the line $x=0$. The Euler
counterpart of this Mach 800 jet is already a very difficult test of positivity
preservation of numerical schemes. Fortunately, we have a good Euler solver
which can overcome this difficulty. Since the magnetic component is not present
in the mechanical pressure, positivity of pressure in the split form is not
interfered by how the magnetic field is. We observe that our method performs
very well regardless of how extreme the magnetic field is. The solution is shown
in Figure~\ref{fig:Jet_200} when setting $B_a=\sqrt{200}$ and
\ref{fig:Jet_20000} when setting $B_a=\sqrt{20000}$. In
Figure~\ref{fig:Jet_200}(b), we notice that the magnetic pressure is sharp but
less smooth in some regions. This can be due to the fact that the magnetic field
is not regularized. Since we did not implement out flow boundary conditions, the
domain is extended to the directions of the bow shocks. The extended domain
parts are cut out from the final plots.

\begin{figure}[h!]
  \centering
  \begin{subfigure}{0.3\textwidth}
    \centering
    \includegraphics[width=\textwidth]{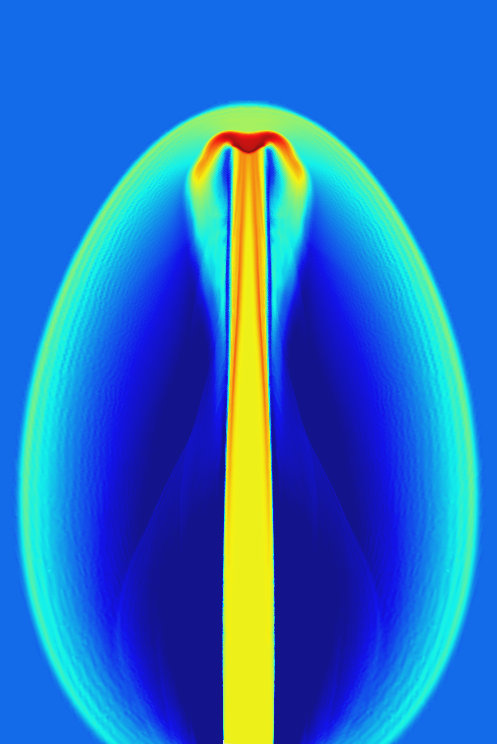}
    \caption{Density logarithm $\log_{10}(\rho_h)$}
  \end{subfigure}
  \begin{subfigure}{0.3\textwidth}
    \centering
    \includegraphics[width=\textwidth]{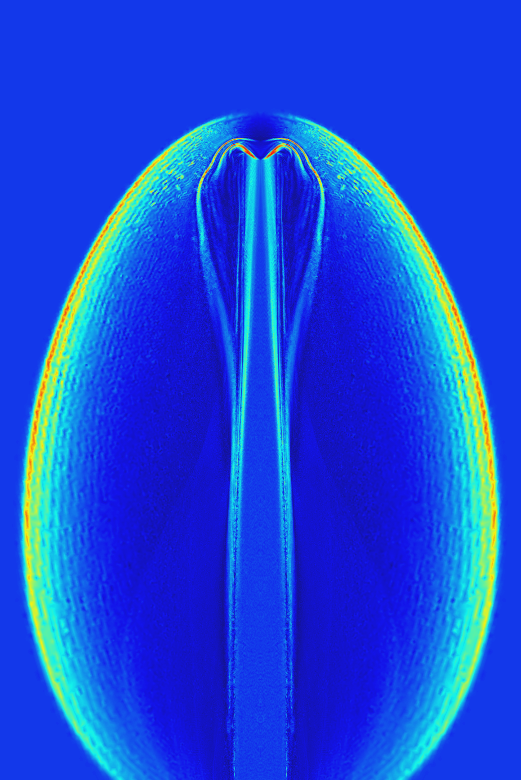}
    \caption{Magnetic pressure $\Bfield_h^2/2$}
  \end{subfigure}
  \begin{subfigure}{0.3\textwidth}
    \centering
    \includegraphics[width=\textwidth]{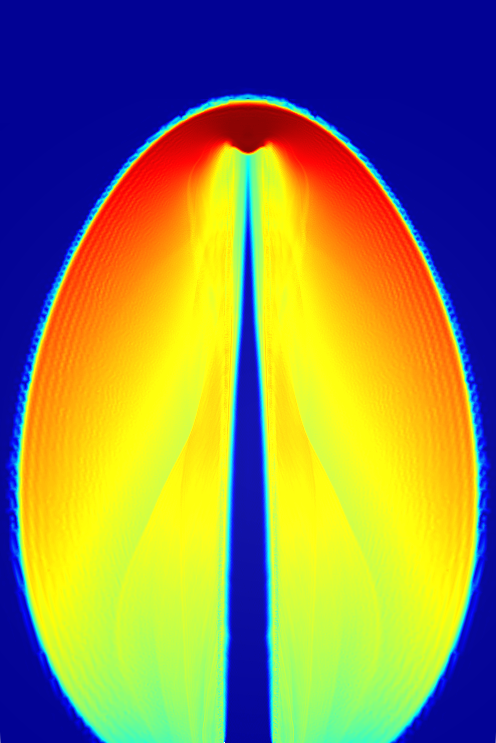}
    \caption{Pressure logarithm $\log_{10}(p_h)$}
  \end{subfigure}
  \caption{Solution to the astrophysical jet problem at time $t=0.002$,
  $B_a = \sqrt{200}$ using 136173 nodal points}
  \label{fig:Jet_200}
\end{figure}

\begin{figure}[h!]
  \centering
  \begin{subfigure}{0.3\textwidth}
    \centering
    \includegraphics[width=\textwidth]{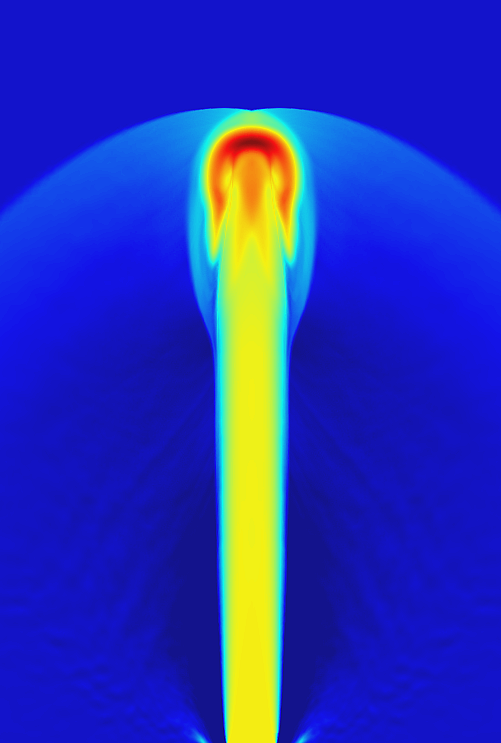}
    \caption{Density logarithm $\log_{10}(\rho_h)$}
  \end{subfigure}
  \begin{subfigure}{0.3\textwidth}
    \centering
    \includegraphics[width=\textwidth]{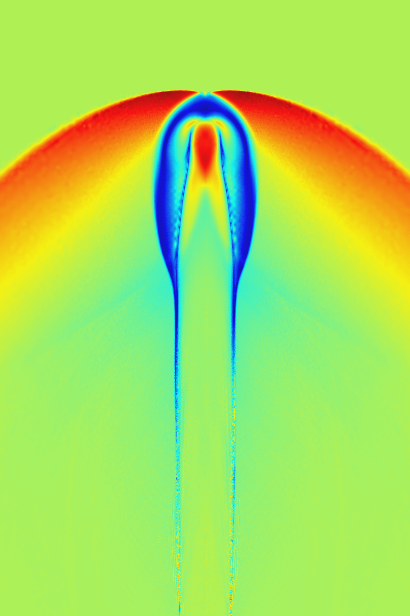}
    \caption{Magnetic pressure $\Bfield_h^2/2$}
  \end{subfigure}
  \begin{subfigure}{0.3\textwidth}
    \centering
    \includegraphics[width=\textwidth]{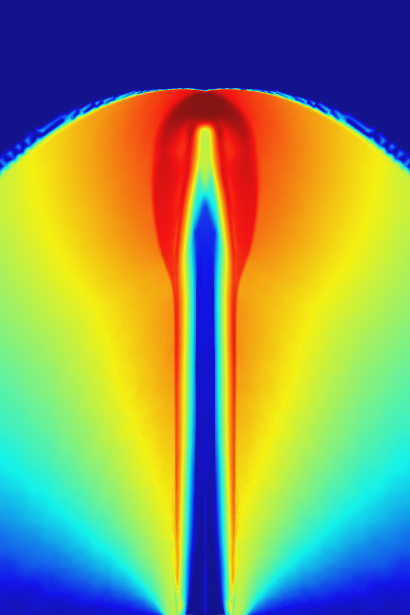}
    \caption{Pressure logarithm $\log_{10}(p_h)$}
  \end{subfigure}
  \caption{Solution to the astrophysical jet problem at time $t=0.002$,
  $B_a = \sqrt{20000}$ using 136173 nodal points}
  \label{fig:Jet_20000}
\end{figure}

\begin{remark}[Solver performance: Why is the scheme competitive?]
Implicit-time integration requires the execution of Newton iterations, with
each iteration involving the inversion a Jacobian. In addition,
the inverse of the Jacobian is applied using a Krylov method, with
each iteration involving two matrix-vector products. The fundamental question
of whether implicit time-integration is competitive (or not) boils down to
having a low count of (linear and nonlinear) iterations. For the
method advanced in this paper we have rather exceptional linear and nonlinear
solver performance. For starters, the nonlinear system \eqref{SourceUpdateCN}
is solved with at most 4 Newton iterations: we hardcoded the logic to
stop the whole computation if more than 4 iterations are needed. This is in big
part because we are using the solution from the previous time-step as an initial
guess. On the other hand, even though the method does not have to respect the
CFL of the MHD system, and magnetosonic waves can be ignored, we still
have to respect the CFL of Euler system. Therefore the time-step sizes
are still moderate and the resulting Jacobian is just a
perturbation of the mass matrix. This means that an inexpensive Krylov method,
such as BiCGStab without any form of preconditioning, can be used in practice.
Usually less than a dozen matrix-vector product are used in order to apply the
inverse of the Jacobian. We believe that the scheme is quite
competitive and the incorporation of matrix-free linear algebra for system
\eqref{SourceUpdateCN} would make the current implementation suitable to
execute three-dimensional computations.
\end{remark}

%%%%%%%%%%%%%%%%%%%%%%%%%%%%%%%%%%%%%%%%%%%%%%%%%%%%%%%%%%%%%%%%%%%%%%%%%%%%%%%% 
%%%%%%%%%%%%%%%%%%%%%%%%%%%%%%%%%%%%%%%%%%%%%%%%%%%%%%%%%%%%%%%%%%%%%%%%%%%%%%%%
%%%%%%%%%%%%%%%%%%%%%%%%%%%%%%%%%%%%%%%%%%%%%%%%%%%%%%%%%%%%%%%%%%%%%%%%%%%%%%%%
%%%%%%%%%%%%%%%%%%%%%%%%%%%%%%%%%%%%%%%%%%%%%%%%%%%%%%%%%%%%%%%%%%%%%%%%%%%%%%%%
%%%%%%%%%%%%%%%%%%%%%%%%%%%%%%%%%%%%%%%%%%%%%%%%%%%%%%%%%%%%%%%%%%%%%%%%%%%%%%%%

\appendix

\section{Hyperbolic solver used in this paper}\label{sec:HypSolver}

This section provides a brief outline of the numerical methods used to solve
Euler's system for all the computational experiments advanced in Section
\ref{sec:numexp}. The Euler's system solver presented in this section fills in
the role of $\texttt{euler\_system\_update}$ invoked in Algorithm
\ref{MHDupdateAlg}, which is supposed to comply with the properties described in
Section \ref{sec:hypminimal}. This section does not introduce any novel concept,
idea, or numerical scheme, and it is only provided for the sake of completeness.
The main ideas advanced in this section were originally developed in the
sequence of papers \cite{Guermond_Popov_2016, Guermond_Popov_Fast_Riemann_2016,
Guer2018} and references therein.

\subsection{Low-order scheme}

Low-order scheme is obtained using first-order Graph Viscosity method suggested
first in \cite{Guermond_Popov_2016}. Let $t_n$ be the current time, $\tau_n$ is
the current time-step and we advance in time by setting $t_{n+1} = t_n +
\tau_n$. Let $\state_h^n = \sum_{i \in \HypVertices} \state_i^n
\phi_i(\xcoord)$ be the finite element approximation at time $t_n$. The first
order approximation at time $t_{n+1}$ is computed as
\begin{align}\label{eq:FO}
  m_i\frac{\state_i^{\text{L},n+1} - \state_i^n}{\tau_n}
  + \sum_{j \in \mathcal{I}(i)} \flux(\state_j^n) \bv{c}_{ij}
  - d_{ij}^\text{L} (\state_j^n - \state_i^n) = \bzero ,
\end{align}
where $m_i$ and $\bv{c}_{ij}$ were defined in \eqref{schemeMatrices}, while the
set $\mathcal{I}(i)$ is the so-called stencil, which is defined as
$\mathcal{I}(i) = \big\{ j \in \HypVertices \,  | \, \bv{c}_{ij} \not =
\bzero \big\}$,
while the low-order graph-viscosity $d_{ij}^\text{L}$ is computed as
\begin{align}
% \begin{aligned}
\label{dijLow}
d_{ij}^\text{L} &:=
\max(
\lambda_{\max}(\state_i^n, \state_j^n, \normal_{ij})\,
|\bv{c}_{ij}|_{\ell_2},\
\lambda_{\max}(\state_j^n, \state_i^n, \normal_{ji})\,
|\bv{c}_{ji}|_{\ell_2}), \quad \forall i\not=j, \\
\nonumber
d_{ii}^\text{L} &:= - \sum_{i\not=j \in \HypVertices} d_{ji}^\text{L}
\ \ \text{and} \ \
\normal_{ij} = \frac{\bv{c}_{ij}}{|\bv{c}_{ij}|_{\ell_2}}.
\end{align}
Here $\lambda_{\max}(\state_L, \state_R, \normal)$ is the maximum wave speed of
the one dimensional Riemann problem: $\partial_t \state + \partial_x
(\bv{f}(\state) \normal) = 0$, where $x = \xcoord \cdot \normal$, with initial
condition: $\state(x,0) = \state_L = [\rho_L, \mom_L, \totme_L]^\transp$ if
$x<0$, and $\state(x,0) =
\state_R = [\rho_R, \mom_R, \totme_R]^\transp$ if $x\geq 0$. The maximum
wavespeed of this Riemann problem can be
computed exactly \cite[Chap.~4]{Toro_2009}, however this comes at the
expense of solving a nonlinear problem. In theory and practice, any upper
bound of the maximum wavespeed of the Riemann problem could be used in
formula \eqref{dijLow} while still preserving rigorous mathematical
properties of the scheme \cite{Guermond_Popov_2016, Guer2019}. For the specific
case of the covolume equation of state; $p(1-b\rho)=(\gamma-1)e\rho$ with $b\ge
0$; we can use $\lambda^{\#}(\state_L, \state_R, \bv{n})$ which is defined by
\begin{align}
% \label{LambdaSharp}
\nonumber
&\lambda^{\#}(\state_L, \state_R, \bv{n}) =
\max((\lambda_1^-(p^{\#}) )_{-},\ (\lambda_3^+(p^{\#}))_+), \\
\nonumber
&\lambda_1^-(p^{\#})=v_L - c_L\left(
1+\frac{\gamma+1}{2\gamma}\left(\frac{p^{\#}-p_L}{p_L}\right)_+
\right)^\frac12, \\
\nonumber
&\lambda_3^+(p^{\#})=v_R + c_R\left(
1+\frac{\gamma+1}{2\gamma}\left(\frac{p^{\#}-p_R}{p_R}\right)_+
\right)^\frac12, \\
\label{PSharp}
&p^{\#} := \left(
\frac{c_L(1-b\rho_L)+c_R(1-b\rho_R)-\frac{\gamma-1}{2}(v_R-v_L)}
{ c_L(1-b\rho_L)\ p_L^{-\frac{\gamma-1}{2\gamma}}
+ c_R(1-b\rho_R)\ p_R^{-\frac{\gamma-1}{2\gamma} } }
\right)^{\frac{2\gamma}{\gamma-1}},
\end{align}
where $z_-:=\max(0,-z)$, $z_+:=\max(0,z)$, $v_L = \vel_L \cdot \normal$, $v_R =
\vel_R \cdot \normal$, $p_L$ and $p_R$ are the left and right pressures,
and $c_L$ and $c_R$ are left and right sound speeds. Here the formula
\eqref{PSharp} is often referred to as the two-rarefaction estimate
\cite{Toro_2009}. It is possible to show that $\lambda_{\max}(\state_L,
\state_R, \bv{n}) \leq \lambda^{\#}(\state_L, \state_R, \bv{n})$
\cite{Guermond_Popov_Fast_Riemann_2016} for $1 < \gamma \le \frac53$. For
all computations presented in this paper, $\lambda^{\#}(\state_L,
\state_R, \bv{n})$ is used instead of $\lambda_{\max}(\state_L,
\state_R, \normal)$ in order to compute the algebraic viscosities described in
\eqref{dijLow}. We finally mention that scheme
\eqref{eq:FO} equipped with the viscosity \eqref{dijLow}, is compatible with
the assumption \eqref{EulerSolverDissipation}. \\

\begin{remark}[Convex reformulation and CFL condition] The scheme
\eqref{eq:FO} can be rewritten as
\begin{align}
\label{ConvexReform}
&\state_i^{n+1} = \Big(1 - \sum_{j \in \mathcal{I}(i)\backslash\{i\}}\tfrac{2
\dt_n d_{ij}^{\low,n}}{m_i}\Big) \,
\state_{i}^{n}
+ \sum_{j \in \mathcal{I}(i)\backslash\{i\}}
\Big(\tfrac{2\dt_n d_{ij}^{\low,n}}{m_i}\Big) \,\overline{\state}_{ij}^{n},
\end{align}
where
\begin{align}
\label{BarState}
\overline{\state}_{ij}^{n} = \tfrac{1}{2}(\state_j^{n} + \state_i^{n})
- \tfrac{|\bv{c}_{ij}|}{2 d_{ij}^\low} (\flux(\state_j^{n}) -
  \flux(\state_i^{n})) \normal_{ij}
\end{align}
are the so-called
bar-states. We note that the states $\{\overline{\state}_{ij}^{n}\}_{j \in
\mathcal{I}(i)}$ are admissible provided that $\state_i^{n}$ and $\state_j^{n}$
are admissible and that $d_{ij}^\low \geq
\max(
\lambda_{\max}(\state_i^n, \state_j^n, \normal_{ij})\,
\|\bv{c}_{ij}\|_{\ell_2},\
\lambda_{\max}(\state_j^n, \state_i^n, \normal_{ji})\,
\|\bv{c}_{ji}\|_{\ell_2})$, see \cite{Guermond_Popov_2016, Guer2019}. We note
that $\state_i^{n+1}$ is a convex combination of the bar-states
$\{\overline{\state}_{ij}^{n}\}_{j \in \mathcal{I}(i)}$ provided the condition
$\Big(1 - \sum_{j \in \mathcal{I}(i)\backslash\{i\}}\tfrac{2
\dt_n d_{ii}^{\low,n}}{m_i}\Big) \geq 0$ holds. Therefore, we define the largest
admissible time-step size as
\begin{align*}
  \dt_n = \text{CFL} \cdot \min_{i \in \HypVertices}
  \big(-\tfrac{m_i}{2 d_{ii}^{\low,n}}\big)
\end{align*}
where $\text{CFL} \in (0,1)$ is a user defined parameter.
\end{remark}

\subsection{High-order scheme}

We note that the scheme \eqref{eq:FO} can only be first-order accurate.
Therefore we consider the high-order scheme:
\begin{align}\label{eq:HighMethod}
  \sum_{j \in \mathcal{I}(i)} m_{ij}\frac{\state_j^{\text{\high},n+1} -
  \state_j^n}{\tau_n}
  + \sum_{j \in \mathcal{I}(i)} \flux(\state_j^n) \bv{c}_{ij}
  - d_{ij}^{\high} (\state_j^n - \state_i^n) = \bzero ,
\end{align}
Here $\{d_{ij}^\high\}_{j \in \mathcal{I}(i)}$ are the high-order viscosities
which are meant to be such that $d_{ij}^{\high} \approx 0$ in smooth regions of
the domain, while $d_{ij}^\high \approx d_{ij}^\low$ near shocks and
discontinuities. In addition, $d_{ij}^\high$ must be symmetric and conservative,
\ie $d_{ij}^\high = d_{ji}^\high$ and
$d_{ii}^\high := - \sum_{i\not=j \in
\HypVertices} d_{ji}^\high$.

In this paper, we use a high-order viscosity that is proportional to the
entropy residual (i.e. entropy-production) of the unstabilized scheme. Let us
start by considering the Galerkin solution $\state_h^{\text{G}}$ defined as
\begin{align}\label{eq:gal}
m_{i}\frac{\state_i^{\text{G}} -
\state_i^n}{\tau_n}
+ \sum_{j \in \mathcal{I}(i)} \flux(\state_j^n) \bv{c}_{ij}
= \bzero
\ \text{ for all }i \in \HypVertices.
\end{align}
Let $\{\eta(\state), \eflux(\state) \}$ be an entropy pair of the Euler system.
We define the entropy residual function $R^n_h(\state_h) = \sum_{i \in
\HypVertices} R_i^n \HypBasisComp_i \in \FESpaceHypComp$ with nodal values
$R_i^n$ defined by
\begin{align}\label{RiDefOne}
R_i^n:=
m_{i} \frac{\state_i^{\text{G}} -
\state_i^n}{\tau_n} \cdot \nabla_{\state}\eta(\state_i^n)
+ \sum_{j \in \mathcal{I}(i)} \eflux(\state_j^n) \bv{c}_{ij}
\ \text{ for all }  i \in \HypVertices.
\end{align}
Here $R_i^n$ is proportional to the entropy production of the unstabilized
scheme \eqref{eq:gal}. However, formula \eqref{RiDefOne} is not practical, since
it requires computing $\state_i^{\text{G}}$. We derive a
formula for $R_i^n$ that does not invoke $\state_i^{\text{G}}$: multiplying
\eqref{eq:gal} by $\nabla_{\state}\eta(\state_i^n)$ we get that
\begin{align*}
m_{i}\frac{\state_i^{\text{G}} -
\state_i^n}{\tau_n}\cdot\nabla_{\state}\eta(\state_i^n) =
- \sum_{j \in \mathcal{I}(i)} (\flux(\state_j^n)
\bv{c}_{ij})\cdot\nabla_{\state}\eta(\state_i^n)
\end{align*}
which we use to replace the first term in \eqref{RiDefOne}:
\begin{align}\label{eq:entres}
R_i^n=
\sum_{j \in \mathcal{I}(i)}
- (\flux(\state_j^n) \bv{c}_{ij})\cdot\nabla_{\state}\eta(\state_i^n)
+ \eflux(\state_j^n) \bv{c}_{ij}
\text{ for all } i \in \HypVertices.
\end{align}
In practice, we use \eqref{eq:entres} in order to compute the entropy-viscosity
indicators. We are now ready to define the high-order nonlinear viscosity as
\begin{align*}
  d_{ij}^\high:= \min
  \Big(
  d_{ij}^\low, c_{\text{EV}}
  \max
  (
  \widetilde R^n_i, \widetilde R^n_j
  )  
  \Big),
\end{align*}
where $c_{\text{EV}}$ is a tunable constant, which is taken to be equal to 1 in
numerical examples in this manuscript, and $\widetilde R^n_i$ is the normalized
entropy residual:
\begin{align*}
  \widetilde R^n_i := \frac{R^n_i}{
    \max
    \Big(
    \rho^{\max,\, n}_i s^{\max,\, n}_i - \rho^{\min,\, n}_i s^{\min,\, n}_i
    ,
    \epsilon \|\eta^n_h\|_{L^\infty(\Omega)}
    \Big)
  },
\end{align*}
where $w^{\max,\, n}_i:=\max_{j\in \mathcal{I}(i)}w^n_j$, and
$
w^{\min,\, n}_i:=\min_{j\in \mathcal{I}(i)}w^n_j,
$
for $w$ being $\rho$ or $s$. Recall that the mathematical entropy is computed as
$\eta(\state)= - \rho s(\state)$, where $s(\state) =\frac{1}{\gamma - 1}
\log(e) - \log(\rho)$ is the specific entropy. A small safety factor
$\epsilon=10^{-8}$ is used to avoid division by zero.

\subsection{Convex limiting}

The low-order and high-order methods can be convenient rewritten as
\begin{align*}
  m_i \big(\state_i^{\low,n+1} - \state_i^n\big)
  + \sum_{j \in \mathcal{I}(i)} \bv{F}_{ij}^{\low} = \bzero
  \ \ \ \text{and} \ \ \
  m_i \big(\state_i^{\high,n+1} - \state_i^n\big)
  + \sum_{j \in \mathcal{I}(i)} \bv{F}_{ij}^{\high} = \bzero
\end{align*}
where the algebraic fluxes $\bv{F}_{ij}^{\low}$ are defined as
$\bv{F}_{ij}^{\high}$
\begin{align*}
  \bv{F}_{ij}^{\low} &=
                       \dt_n \big(\flux(\state_j^n) + \flux(\state_i^{n})\big)\bv{c}_{ij} -
                       \dt_n d_{ij}^{\low,n} (\state_j^n - \state_i^n), \\
  \bv{F}_{ij}^{\high} &=
                        \dt_n \big(\flux(\state_j^n) +
                        \flux(\state_i^n) \big)\bv{c}_{ij}
                        - \dt_n d_{ij}^{\high} (\state_j^{n} - \state_i^{n}) \\
                     &\qquad + (m_{ij} - \delta_{ij} m_i) (\state_j^{\high,n+1} -
                       \state_j^n
                       - \state_i^{\high, n+1} + \state_i^n).
\end{align*}
We define the algebraic flux-corrections $\bv{A}_{ij} = \bv{F}_{ij}^{\low} -
\bv{F}_{ij}^{\high}$ and set the final flux-limited solution to be
\begin{align}
\label{FluxLimitedScheme}
\begin{split}
m_i \state_i^{n+1}
=
m_i \state_i^{\low, n+1}
+ \sum_{j \in \mathcal{I}(i)} \limiter_{ij} \bv{A}_{ij}
\end{split}
\end{align}
where $\limiter_{ij} \in [0,1]$ are the limiters. If $\limiter_{ij} \equiv 0$
for all $i$ and $j$, then \eqref{FluxLimitedScheme} recovers $\state_i^{n+1} =
\state_i^{\low, n+1}$. Similarly, if $\limiter_{ij} \equiv 1$
for all $i$ and $j$ then \eqref{FluxLimitedScheme} leads to $\state_i^{n+1} =
\state_i^{\high, n+1}$. The goal is to select limiters as large as it could be
possible while also preserving important bounds.

We want to enforce local bounds on the density and local minimum principle of the
specific entropy. However, logarithmic entropies, such as $s(\state) = \ln
\frac{p(\state)}{\rho^\gamma}$, are not friendly in the context
Newton-like line search iterative methods. Therefore, we use $\tilde{s}(\state)
= \rho^{-\gamma} \inte(\state)$ which leads to an entirely equivalent minimum
principle since
\begin{align*}
s(\state) \leq s(\boldsymbol{v}) \ \Leftrightarrow \
\tilde{s}(\state) \leq \tilde{s}(\boldsymbol{v})
\ \text{for all} \ \state, \boldsymbol{v} \in \mathcal{A} \, ,
\end{align*}
due to the monotonicity of $\ln x$. Therefore, at each node $i \in
\HypVertices$ we compute the bounds:
\begin{align*}
\rho_i^{\text{min}} &:= \mathbb{1}_h^{-} \min_{j \in \mathcal{I}(i)}
\min \{\rho_j^n, \overline{\rho}_{ij}^n\} \\
\rho_i^{\text{max}} &:= \mathbb{1}_h^{+} \max_{j \in \mathcal{I}(i)}
\max \{\rho_j^n, \overline{\rho}_{ij}^n\} \\
\tilde{s}_i^{\text{min}} &:= \mathbb{1}_h^{-} \min_{j \in \mathcal{I}(i)}
\min \{\tilde{s}_j^n, \overline{\tilde{s}}_{ij}^n\}
\end{align*}
where $\overline{\rho}_{ij}^n$ denotes the density of the bar-state
$\overline{\state}_{ij}^{n}$ (see expression \eqref{BarState}), while
$\overline{\tilde{s}}_{ij}^n := \tilde{s}(\overline{\state}_{ij}^{n})$. Here
$\mathbb{1}_h^{-}$ and $\mathbb{1}_h^{+}$ are just ad-hoc relaxations of the
unity with a prescribed decay rate with respect to the local meshsize $h$. More
precisely we consider
\begin{align*}
\mathbb{1}_h^{-} = 1 - \kappa  (\tfrac{m_i}{|\Omega|})^\frac{p}{d}
\ \ \text{and} \ \
\mathbb{1}_h^{+} = 1 + \kappa (\tfrac{m_i}{|\Omega|})^\frac{p}{d}
\ \ \text{with} \ p = 1.50, \ d = 2.0, \ \text{and} \ \kappa = 4.0 \, .
\end{align*}
We mention in passing that, asymptotically for $h \rightarrow 0$, the value of
$\kappa$ is has no importance and we may use any other $\kappa =
\mathcal{O}(1)$. At each node $i \in \HypVertices$ we define the set
\begin{align*}
\mathcal{B}_i = \big\{ \state = [\rho,\mom, \totme]^\transp \in \mathbb{R}^{d+2}
\, \big| \,
\rho_i^{\text{min}} \leq \rho \leq \rho_i^{\text{max}} ,
\tilde{s}(\state) \geq \tilde{s}_i^{\text{min}} \big\}
\end{align*}
We note that \eqref{FluxLimitedScheme} can be conveniently rewritten as
\begin{align}
\label{ConvexScheme}
\begin{split}
\state_i^{\high, n+1}
= \sum_{j \in \mathcal{I}(i)}
\lambda_i (\state_i^{\low, n+1} + \ell_{ij} \bv{P}_{ij})
\ \ \text{where} \ \
\lambda_i = \tfrac{1}{\text{card}\mathcal{I}(i) - 1}
\ \ \text{and} \ \
\bv{P}_{ij} = \tfrac{1}{\lambda_i m_i} \bv{A}_{ij}
\end{split}
\end{align}
Convex-limiting is built on the observation that condition $\state_i^{\high,
n+1} \in \mathcal{B}_i$ will hold if $\state_i^{\low, n+1} + \ell_{ij}
\bv{P}_{ij} \in \mathcal{B}_i$ for all $j \in \mathcal{I}(i)$. Therefore, at
each node $i$ we compute the preliminary limiters $l_{ij}$ as
\begin{align*}
l_{ij} :=
\texttt{compute\_line\_search}(\state_i^{\low, n+1}, \bv{P}_{ij},
\rho_i^{\text{min}}, \rho_i^{\text{max}}, \tilde{s}_i^{\text{min}})
\end{align*}
with $\texttt{compute\_line\_search}$ as defined in Algorithm
\ref{ComputeLineSearchAlg}, while the final limiters are computed as
$\ell_{ij} = \min \{l_{ij}, l_{ji}\}$ in order to guarantee conservation
properties of the scheme, see \cite{Guer2018, Guer2021, Kronbichler2021} for
both theory and implementation detail.

\begin{algorithm}[H]
\caption{\texttt{compute\_line\_search}($\state, \bv{P},
\varrho^\textit{min}, \varrho^\textit{max}, \tilde{s}^\textit{min}$) }
\label{ComputeLineSearchAlg}
\begin{align*}
&\ell^{\rho,\textit{min}} :=
\max \{\ell \in [0,1] \, | \, \rho(\state + \ell \bv{P}) \geq
\varrho^{\textit{min}} \} \\
&\ell^{\rho,\textit{max}} := \max \{\ell \in
[0,\ell^{\rho,\textit{min}}]
\, | \,
\rho(\state + \ell \bv{P}) \leq \varrho^{\textit{max}} \} \\
&\ell^{s} := \max \{\ell \in [0,\ell^{\rho,\textit{max}}]
\, | \,
\tilde{s}(\state + \ell \bv{P}) \geq
\tilde{s}^\textit{min} \} \\
&\texttt{Return:} \ell^{s}
\end{align*}
\textit{Comments: input arguments are $\state, \bv{P} \in
\mathbb{R}^{m}$ and $\varrho^\textit{min}, \varrho^\textit{max},
\tilde{s}^\textit{min} \in \mathbb{R}^+$.}
\end{algorithm}

\bibliographystyle{plain}
\bibliography{MacrosAndBiblio/biblio}

\end{document}